\theoremstyle{plain}
\newtheorem{thm}{\protect\theoremname}[section]
\theoremstyle{remark}
\newtheorem{rem}[thm]{\protect\remarkname}
\theoremstyle{plain}
\newtheorem{lem}[thm]{\protect\lemmaname}
\theoremstyle{definition}
\newtheorem{defn}[thm]{\protect\definitionname}
\theoremstyle{plain}
\providecommand{\corollaryname}{Corollary}
\providecommand{\definitionname}{Definition}
\providecommand{\lemmaname}{Lemma}
\providecommand{\remarkname}{Remark}
\providecommand{\theoremname}{Theorem}
\def\R{{\mathbb R}}
\def\1/2{\frac{1}{2}}
\def\Isom{\mbox{\rm Isom}}
\def\vep{\varepsilon}
\begin{document}

\title[The Lewy-Stampacchia inequality]{The Lewy-Stampacchia Inequality for the Fractional Laplacian and Its Application to Anomalous Unidirectional Diffusion Equations}

\author{Pu-Zhao Kow}

\address{Department of Mathematics, National Taiwan University, Taipei 106.}

\email{d07221005@ntu.edu.tw}

\author{Masato Kimura}

\address{Faculty of Mathematics and Physics, Kanazawa University, Kakuma, Kanazawa 920-1192.}

\email{mkimura@se.kanazawa-u.ac.jp}

\begin{abstract}
In this paper, we consider a Lewy-Stampacchia-type inequality for the fractional Laplacian on a bounded domain in Euclidean space. Using this inequality, we can show the well-posedness of fractional-type anomalous unidirectional diffusion equations. This study is an extension of the work by Akagi-Kimura (2019) for the standard Laplacian. However, there exist several difficulties due to the nonlocal feature of the fractional Laplacian. We overcome those difficulties employing the Caffarelli-Silvestre extension of the fractional Laplacian.
\end{abstract}

\subjclass[2020]{35R11; 35K86; 35K61}

\keywords{Lewy-Stampacchia; Obstacle problem; Fractional Laplacian; Anomalous unidirectional diffusion}
\maketitle

\section{Introduction}

\subsection{Overview}

Let $\Omega$ be a bounded Lipschitz domain in $\mathbb{R}^{n}$ with $n\in\mathbb{N}$, and let $s\in(0,1)$. The main aim of this paper is to prove a Lewy-Stampacchia-type inequality for the spectral fractional Laplacian $(-\Delta)^{s}$. Using this inequality, we also prove the well-posedness of the following unidirectional evolution equation of the fractional-diffusion type: 
\begin{align*}
\partial_{t}u & =[-(-\Delta)^{s}u+f]_{+}\quad\text{in }\Omega\times(0,T],\\
u & =0\quad\text{on }\partial\Omega\times(0,T] \quad \text{if }s \in (1/2,1),\\
u|_{t=0} & =u_{0}\quad\text{in }\Omega,
\end{align*}
where $\partial_{t}=\frac{\partial}{\partial t}$, $f=f(x,t)$ and $u_{0}=u_{0}(x)$ are given functions.

\subsection{Fractional Laplacian and Sobolev spaces}

Before we state our main results, we first introduce some notations and the definition of $(-\Delta)^{s}$ for $s\in(0,1)$ as in Caffarelli and Stinga \cite{CS16}. First, we recall some Sobolev spaces introduced in \cite[Remark 2.1]{CS16} and \cite[Section 2]{NOS15}. For $s \in (0,1)$ and $s \neq 1/2$, let $H_{0}^{s}(\Omega)$ be the fractional Sobolev space given by the completion of $\mathcal{C}_{c}^{\infty}(\Omega)$ under the norm
\begin{equation}
\|\bullet\|_{H_{0}^{s}(\Omega)}^{2}:=\|\bullet\|_{L^{2}(\Omega)}^{2}+[\bullet]_{H_{0}^{s}(\Omega)}^{2},\label{eq:sobolev-norm}
\end{equation}
where the seminorm $[\bullet]_{H_{0}^{s}(\Omega)}$ is given by 
\[
[v]_{H_{0}^{s}(\Omega)}^{2}:=\int_{\Omega}\int_{\Omega}\frac{(v(x)-v(z))^{2}}{|x-z|^{n+2s}}\,dx\,dz.
\]
For $s=1/2$, we write $H_{0}^{1/2}(\Omega)=H_{00}^{1/2}(\Omega)$, which is the Lions-Magenes space; that is, we additionally assume (this assumption is essential, see Remark~\ref{rem:equivalent-def-frac-space} for further explanation)
\begin{equation}
\int_{\Omega}\frac{u(x)^{2}}{{\rm dist}(x,\partial\Omega)}\,dx<\infty \label{eq:Lions-Magnes-condition}
\end{equation}
for the case $s=1/2$, and the norm is also given in \eqref{eq:sobolev-norm} (see \cite[equation (2.9)]{NOS15} and \cite[Theorem 11.7]{LM72} for further details). Because $\Omega$ has a Lipschitz boundary, by \cite[Theorem 2.9 and Theorem 2.12]{Mi11}, then  
\begin{subequations}
\begin{align}
H_{0}^{s}(\Omega) & =\big\{ v\in L^{2}(\Omega):~[v]_{H_{0}^{s}(\Omega)}<\infty\big\}\quad\text{for }0<s<\frac{1}{2},\label{eq:tracea}\\
H_{0}^{\frac{1}{2}}(\Omega) & =\big\{ v\in L^{2}(\Omega):~[v]_{H_{0}^{\frac{1}{2}}(\Omega)}<\infty,
\, v\text{ satisfies \eqref{eq:Lions-Magnes-condition}}\big\},\label{eq:traceb}\\
H_{0}^{s}(\Omega) & \subsetneq\big\{ v\in L^{2}(\Omega):~[v]_{H_{0}^{s}(\Omega)}<\infty\big\}\quad\text{for }\frac{1}{2}<s<1,\label{eq:tracec}
	\end{align}
\end{subequations}
see also \cite[Theorem 3.40]{McL00} or \cite[last paragraph in page 738]{NOS15}. 

Let $H^{-s}(\Omega)=(H_{0}^{s}(\Omega))'$ be the dual space of $H_{0}^{s}(\Omega)$. Using \cite[Theorem 9.31 and Remark 28]{Bre11}, there exists a sequence of complete eigenvalues $\{\lambda_{k}\}_{k=0}^{\infty}$ of the Laplacian operator $-\Delta$ with zero Dirichlet boundary condition, with the corresponding $L^{2}$ complete orthonormal basis $\{\phi_{k}\}_{k=0}^{\infty}$ in $H_{0}^{1}(\Omega)\cap\mathcal{C}^{\infty}(\Omega)$ such that 
\[
-\Delta \phi_{k} = \lambda_{k} \phi_{k}\quad\text{for all }k=0,1,2,\cdots.
\]
Therefore, for $v,w \in \mathscr{D}=\{v \in L^{2}(\Omega) : \sum_{k=0}^{\infty} \lambda_k v_{k}^{2} < \infty\}$, we have 
\[
_{\mathscr{D}'} \langle -\Delta v , w \rangle_{\mathscr{D}} = \sum_{k=0}^{\infty} \lambda_{k} v_{k} w_{k}\quad\text{if } v = \sum_{k=0}^{\infty} v_{k} \phi_{k}\text{ and } w = \sum_{k=0}^{\infty} w_{k} \phi_{k}.
\]
For the sake of simplicity, we simply denote 
\begin{equation}
-\Delta v=\sum_{k=0}^{\infty}\lambda_{k}v_{k}\phi_{k} \,\quad  \text{if } v=\sum_{k=0}^{\infty}v_{k}\phi_{k} \in \mathscr{D}. \label{eq:classical-Laplacian}
\end{equation}
Using the Lax-Milgram theorem and Poincar\'{e} inequality, we know that
\[
-\Delta:H_{0}^{1}(\Omega)\rightarrow H^{-1}(\Omega)\quad\text{is an isomorphism.}
\]
In view of \eqref{eq:classical-Laplacian}, we can define the spectral fractional Laplacian $(-\Delta)^{s}$ as 
\begin{equation}
(-\Delta)^{s}v:=\sum_{k=0}^{\infty}\lambda_{k}^{s}v_{k}\phi_{k} \quad  \text{if } v=\sum_{k=0}^{\infty}v_{k}\phi_{k} \in \mathscr{E}, \label{eq:spectral-definition}
\end{equation}
where $\mathscr{E}=\{v \in L^{2}(\Omega) : \sum_{k=0}^{\infty} \lambda_{k}^{s} v_{k}^{2} < \infty\}$. Actually, the operator $(-\Delta)^{s}$ above is also well-known as the spectral (Navier) fractional Laplacian; see, e.g., \cite{MN14}. In \cite{CS16}, using the equivalence of the Caffarelli-Silvestre-type extension, similar to that in \cite{CS07}, and the spectral definition \eqref{eq:spectral-definition}, Caffarelli and Stinga showed that $\mathscr{D}((-\Delta)^{s})=H_{0}^{s}(\Omega)$; see also \cite[equation (2.13)]{NOS15}, as well as \cite{MN14,MN16}. Indeed, \cite[eq. (1.2) and the line before Sec. 2.2]{CS16} shows that 
\[
(-\Delta)^{s}:H_{0}^{s}(\Omega)\rightarrow H^{-s}(\Omega)\quad\text{is an isomorphism.}
\]
This isomorphism can also be obtained using the Lax-Milgram theorem, with the bilinear form given in \eqref{eq:duality-pair} on the Hilbert space $H_{0}^{s}(\Omega)$. In other words, we have the following lemma: 
\begin{lem}
	\label{lem:solvability} Given any $f\in H^{-s}(\Omega)$, there exists a unique $v\in H_{0}^{s}(\Omega)$ that satisfies the fractional Poisson equation $(-\Delta)^{s}v=f$ in $\Omega$. 
\end{lem}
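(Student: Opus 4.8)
The plan is to construct $v$ directly from the spectral expansion and then read off the conclusion from the identification $\mathscr{D}((-\Delta)^{s})=H_{0}^{s}(\Omega)$ recalled above. I would begin by noting that, $\Omega$ being bounded, the Dirichlet eigenvalues obey $0<\lambda_{0}\leq\lambda_{1}\leq\cdots\to\infty$, so that $\lambda_{k}^{s}$ and $\lambda_{k}^{-s}$ are all well defined and strictly positive. Expanding $f\in H^{-s}(\Omega)$ as $f=\sum_{k=0}^{\infty}f_{k}\phi_{k}$ with $f_{k}:={}_{H^{-s}(\Omega)}\langle f,\phi_{k}\rangle_{H_{0}^{s}(\Omega)}$, the duality $H^{-s}(\Omega)=(H_{0}^{s}(\Omega))'$ together with the spectral description of $H_{0}^{s}(\Omega)$ yields the Parseval-type identity $\|f\|_{H^{-s}(\Omega)}^{2}\simeq\sum_{k=0}^{\infty}\lambda_{k}^{-s}f_{k}^{2}<\infty$.

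Next I would set $v:=\sum_{k=0}^{\infty}\lambda_{k}^{-s}f_{k}\phi_{k}$, i.e., $v_{k}=\lambda_{k}^{-s}f_{k}$. Then $\sum_{k}\lambda_{k}^{s}v_{k}^{2}=\sum_{k}\lambda_{k}^{-s}f_{k}^{2}<\infty$, so $v\in\mathscr{E}$, which by \cite{CS16} (see also \cite{NOS15}) coincides with $H_{0}^{s}(\Omega)$ up to equivalence of norms; hence $v\in H_{0}^{s}(\Omega)$. By the definition \eqref{eq:spectral-definition} one then gets $(-\Delta)^{s}v=\sum_{k}\lambda_{k}^{s}v_{k}\phi_{k}=\sum_{k}f_{k}\phi_{k}=f$ in $H^{-s}(\Omega)$, which proves existence. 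For uniqueness, if $v\in H_{0}^{s}(\Omega)$ satisfies $(-\Delta)^{s}v=0$, then comparing the coefficients of $\phi_{k}$ forces $\lambda_{k}^{s}v_{k}=0$, and since $\lambda_{k}>0$ we obtain $v_{k}=0$ for every $k$, so $v=0$ in $L^{2}(\Omega)$ and a fortiori in $H_{0}^{s}(\Omega)$.

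Equivalently --- and this is the route indicated by the surrounding text --- one may invoke the Lax--Milgram theorem for the bilinear form $a(v,w):=\sum_{k}\lambda_{k}^{s}v_{k}w_{k}$ of \eqref{eq:duality-pair} on the Hilbert space $H_{0}^{s}(\Omega)$: continuity is Cauchy--Schwarz, while coercivity follows from the fractional Poincar\'{e} inequality $a(v,v)=\sum_{k}\lambda_{k}^{s}v_{k}^{2}\geq\lambda_{0}^{s}\|v\|_{L^{2}(\Omega)}^{2}$ combined with the norm equivalence $[\,\cdot\,]_{H_{0}^{s}(\Omega)}^{2}\simeq\sum_{k}\lambda_{k}^{s}v_{k}^{2}$. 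The only genuinely nontrivial ingredient in either argument is exactly this equivalence between the Gagliardo seminorm and the spectral (Caffarelli--Silvestre extension) energy, i.e., the identity $\mathscr{D}((-\Delta)^{s})=H_{0}^{s}(\Omega)$; since this has already been recorded from \cite{CS16,NOS15}, the present lemma reduces to the bookkeeping just described. The one place where I would proceed with slight care is $s=1/2$, where $H_{0}^{1/2}(\Omega)$ is the Lions--Magenes space and the Gagliardo seminorm alone does not single out the correct space, but this subtlety is already absorbed into the cited identification and so requires no separate treatment here.
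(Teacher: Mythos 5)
Your proposal is correct and matches the paper's treatment: the paper obtains this lemma precisely from the isomorphism $(-\Delta)^{s}\colon H_{0}^{s}(\Omega)\to H^{-s}(\Omega)$ cited from Caffarelli--Stinga, equivalently from the Lax--Milgram theorem applied to the bilinear form in \eqref{eq:duality-pair}, which is exactly your second route. Your explicit spectral construction $v_{k}=\lambda_{k}^{-s}f_{k}$ is just an unpacking of that same isomorphism and rests on the same key ingredient, the norm equivalence between $\|\bullet\|_{H_{0}^{s}(\Omega)}$ and the spectral energy, so no separate argument is needed.
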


Consider the solution $V=V(x,y):\Omega\times[0,\infty)\rightarrow\mathbb{R}$
to the extension problem 
\begin{equation}
\begin{cases}
\Delta_{x}V+\frac{1-2s}{y}V_{y}+V_{yy}=0 & \text{in }\Omega\times(0,\infty),\\
V(x,y)=0 & \text{on }\partial\Omega\times[0,\infty),\\
V(x,0)=v(x) & \text{on }\Omega.
\end{cases}\label{eq:extension-problem}
\end{equation}
In \cite{CS16}, the authors showed that 
\begin{equation}
-\lim_{y\rightarrow0_{+}}y^{1-2s}V_{y}(x,y)=c_{s}(-\Delta)^{s}v(x)\quad\text{in }H^{-s}(\Omega),\quad\text{where }c_{s}=\frac{\Gamma(1-s)}{4^{s-1/2}\Gamma(s)}>0.\label{eq:CS-ext}
\end{equation}
In \cite[Theorem 2.5]{CS16}, we have 
\[
\int_{\Omega}\int_{0}^{\infty}y^{1-2s}|\nabla_{(x,y)}V(x,y)|^{2}\,dy\,dx=\|(-\Delta)^{s/2}v\|_{L^{2}(\Omega)}^{2},
\]
and also 
\begin{equation}
\|\bullet\|_{H_{0}^{s}(\Omega)}\text{ is equivalent to the norm }\|(-\Delta)^{s/2}\bullet\|_{L^{2}(\Omega)}.\label{eq:norm-equiv}
\end{equation}
Denote the $H^{-s}(\Omega) \times H_{0}^{s}(\Omega)$ duality pair by $\langle\bullet,\bullet\rangle={}_{H^{-s}(\Omega)}\langle\bullet,\bullet\rangle_{H_{0}^{s}(\Omega)}$, and we also have the following norm equivalence   
\begin{equation}
\|v\|_{H_{0}^{s}(\Omega)}^{2}\cong\|(-\Delta)^{s/2}v\|_{L^{2}(\Omega)}^{2}=\langle(-\Delta)^{s}v,v\rangle\quad\text{for all }v \in H_{0}^{s}(\Omega).\label{eq:duality-pair}
\end{equation}
We define 
\[
\mathscr{E}^{s}(V) := \int_{\Omega} \int_{0}^{\infty} y^{1-2s} |\nabla_{x,y}V(x,y)|^{2} \,dy\,dx
\]
and for each $v \in H_{0}^{s}(\Omega)$, we consider the space 
\[
\mathscr{W}^{s}(v) := \bigg\{ V \in H_{\rm loc}^{1}(\Omega \times (0,\infty)) : \mathscr{E}^{s}(V)<\infty,V|_{y=0}=v\text{ on }\Omega,V=0 \text{ on } \partial \Omega \times (0,\infty) \bigg\}.
\]
Indeed, the condition $V|_{y=0}=v$ on $\Omega$ in the definition of $\mathscr{W}^{s}(v)$ is defined by \cite[Proposition 2.5]{NOS15}. The following can also be found in \cite{ST10}: 
\begin{lem}\label{lem:minimizing-problem}
	For $v\in H_{0}^{s}(\Omega)$, there exists a unique minimizer $V(v)$
	\[
	V(v) = \underset{V \in \mathscr{W}^{s}(v)}{\rm argmin} \, \mathscr{E}^{s}(V).
	\]
	Moreover, $\langle (-\Delta)^{s}v,v \rangle = \mathscr{E}^{s}(V(v))$ holds. 
\end{lem}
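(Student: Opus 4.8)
The plan is to prove Lemma~\ref{lem:minimizing-problem} by the direct method of the calculus of variations, exploiting the Hilbert-space structure of the weighted Sobolev space underlying $\mathscr{W}^{s}(v)$ together with the trace identity \eqref{eq:CS-ext}. First I would fix $v \in H_{0}^{s}(\Omega)$ and set $m := \inf_{V \in \mathscr{W}^{s}(v)} \mathscr{E}^{s}(V)$. To see that $\mathscr{W}^{s}(v) \neq \emptyset$ and $m < \infty$, I would invoke the explicit extension constructed in \cite{CS16} (equivalently, the Poisson-type representation via the eigenfunction expansion $V(x,y) = \sum_{k} v_{k} \varphi_{s}(\sqrt{\lambda_{k}}\,y)\phi_{k}(x)$ with the appropriate Bessel-type profile $\varphi_{s}$), whose Dirichlet-type energy equals $\|(-\Delta)^{s/2}v\|_{L^{2}(\Omega)}^{2} < \infty$ by \cite[Theorem 2.5]{CS16}; so $m$ is finite and in fact $m \le \|(-\Delta)^{s/2}v\|_{L^{2}(\Omega)}^{2}$. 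Then I would take a minimizing sequence $\{V_{j}\} \subset \mathscr{W}^{s}(v)$ with $\mathscr{E}^{s}(V_{j}) \to m$.

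Next I would establish compactness. The energy $\mathscr{E}^{s}$ controls the weighted gradient in the Hilbert space $\mathscr{H} := L^{2}(\Omega \times (0,\infty); y^{1-2s}\,dy\,dx)$-closure of functions with finite energy and zero lateral boundary values; using the weighted Poincar\'e inequality on $\Omega \times (0,\infty)$ (available because $\Omega$ is bounded, giving control of $\|V\|$ in a suitable weighted $L^{2}$ space by $\mathscr{E}^{s}(V)^{1/2}$, as in \cite{NOS15,CS16}) one sees $\{V_{j}\}$ is bounded in the Hilbert space $\mathscr{W}^{s}(0)$ (affinely, $V_{j} - V_{*} \in \mathscr{W}^{s}(0)$ for a fixed reference extension $V_{*} \in \mathscr{W}^{s}(v)$, and $\mathscr{W}^{s}(0)$ is a genuine Hilbert space with inner product $\int_{\Omega}\int_{0}^{\infty} y^{1-2s}\nabla_{x,y}U \cdot \nabla_{x,y}W$). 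Hence a subsequence converges weakly, $V_{j} \rightharpoonup V$ in $\mathscr{W}^{s}(v)$. Since $V \mapsto \mathscr{E}^{s}(V)$ is a squared Hilbert-space seminorm, it is convex and strongly continuous, hence weakly lower semicontinuous, so $\mathscr{E}^{s}(V) \le \liminf_{j} \mathscr{E}^{s}(V_{j}) = m$; as $V \in \mathscr{W}^{s}(v)$ (the affine constraint $V|_{y=0}=v$ and lateral vanishing pass to the weak limit, the trace being a bounded linear map by \cite[Proposition 2.5]{NOS15}), $V$ is a minimizer. Uniqueness follows from strict convexity: if $V_{1}, V_{2}$ are two minimizers, then $\tfrac{1}{2}(V_{1}+V_{2}) \in \mathscr{W}^{s}(v)$ and the parallelogram identity gives $\mathscr{E}^{s}(\tfrac{1}{2}(V_{1}+V_{2})) = m - \tfrac{1}{4}\mathscr{E}^{s}(V_{1}-V_{2})$, forcing $\mathscr{E}^{s}(V_{1}-V_{2}) = 0$, i.e. $\nabla_{x,y}(V_{1}-V_{2}) = 0$, and with the vanishing lateral trace this yields $V_{1} = V_{2}$. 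I would then write $V(v)$ for this minimizer.

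For the final identity $\langle (-\Delta)^{s}v, v\rangle = \mathscr{E}^{s}(V(v))$, I would combine two facts already in the excerpt: by \eqref{eq:duality-pair} we have $\langle (-\Delta)^{s}v, v\rangle = \|(-\Delta)^{s/2}v\|_{L^{2}(\Omega)}^{2}$, and by \cite[Theorem 2.5]{CS16} the energy of the specific Caffarelli--Silvestre extension equals $\|(-\Delta)^{s/2}v\|_{L^{2}(\Omega)}^{2}$. It remains to check that this particular extension is the minimizer; this is the Euler--Lagrange characterization: the minimizer $V(v)$ satisfies $\int_{\Omega}\int_{0}^{\infty} y^{1-2s}\nabla_{x,y}V(v)\cdot\nabla_{x,y}\Phi = 0$ for all $\Phi \in \mathscr{W}^{s}(0)$, i.e. it is the weak solution of the degenerate-elliptic equation in \eqref{eq:extension-problem}, which by uniqueness of that weak solution (again a Lax--Milgram argument on $\mathscr{W}^{s}(0)$) coincides with the Caffarelli--Stinga extension; hence $\mathscr{E}^{s}(V(v)) = \|(-\Delta)^{s/2}v\|_{L^{2}(\Omega)}^{2} = \langle(-\Delta)^{s}v,v\rangle$.

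The main obstacle I anticipate is the functional-analytic bookkeeping around the constraint space: one must verify that $\mathscr{W}^{s}(v)$ is the right affine subspace of a Hilbert space on which $\mathscr{E}^{s}$ is coercive — this requires the weighted Poincar\'e/trace inequalities on the half-cylinder $\Omega\times(0,\infty)$ with the Muckenhoupt weight $y^{1-2s}$ (the case $s \le 1/2$ versus $s > 1/2$ being slightly different, and the $s=1/2$ Lions--Magenes subtlety entering through \eqref{eq:Lions-Magnes-condition}), and the continuity of the trace operator $V \mapsto V|_{y=0}$ from this space into $H_{0}^{s}(\Omega)$, so that the weak limit indeed lands in $\mathscr{W}^{s}(v)$. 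Once these mapping properties are in place (all available from \cite{NOS15,CS16,ST10}), the direct method and the parallelogram identity close the argument routinely.
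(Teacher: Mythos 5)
Your argument is correct in substance, but note that the paper itself does not prove this lemma at all: it simply quotes it from \cite{ST10} (with the trace assertion delegated to \cite[Proposition 2.5]{NOS15}), so what you have written is essentially a self-contained reconstruction of the standard proof in the cited sources. Your route --- direct method (minimizing sequence, slice-wise Poincar\'e in $x$ plus the weighted trace estimate to get coercivity on $\mathscr{W}^{s}(0)$, weak lower semicontinuity), uniqueness by the parallelogram identity, then identification of the minimizer with the Caffarelli--Stinga extension via the Euler--Lagrange equation and Lax--Milgram, and finally \cite[Theorem 2.5]{CS16} together with \eqref{eq:duality-pair} for the energy identity --- is exactly the classical scheme, and the points you flag (completeness of the weighted space, boundedness of the trace $V\mapsto V|_{y=0}$ into $H_{0}^{s}(\Omega)$ including the $H_{00}^{1/2}$ case) are precisely the inputs supplied by \cite{NOS15,CS16,ST10}. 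One remark: you can shortcut the compactness step entirely. Writing any competitor as $W=V_{\mathrm{CS}}+\Phi$ with $\Phi\in\mathscr{W}^{s}(0)$, where $V_{\mathrm{CS}}$ is the Caffarelli--Stinga extension, the weak formulation of \eqref{eq:extension-problem} makes the cross term $\int_{\Omega}\int_{0}^{\infty}y^{1-2s}\nabla_{x,y}V_{\mathrm{CS}}\cdot\nabla_{x,y}\Phi\,dy\,dx$ vanish (the lateral trace of $\Phi$ is zero and $\Phi|_{y=0}=0$), so $\mathscr{E}^{s}(W)=\mathscr{E}^{s}(V_{\mathrm{CS}})+\mathscr{E}^{s}(\Phi)$; this gives existence, uniqueness, and $\mathscr{E}^{s}(V(v))=\|(-\Delta)^{s/2}v\|_{L^{2}(\Omega)}^{2}=\langle(-\Delta)^{s}v,v\rangle$ in one stroke, at the price of the same weighted-space facts you already invoke. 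Either way, your proposal closes the gap the paper leaves to the references, and I see no error in it.
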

\begin{rem}
$V(v)$ is a weak solution to the extension problem \eqref{eq:extension-problem} and \eqref{eq:CS-ext} holds. 
\end{rem}	
The norm equivalence \eqref{eq:duality-pair} suggests the following definition of the linear space 
\[
X_{0}^{2s}(\Omega):=\{u\in H_{0}^{s}(\Omega):(-\Delta)^{s}u\in L^{2}(\Omega)\} \quad \text{for } s \in (0,1),
\]
equipped with the norm 
\begin{equation}
\|v\|_{X_{0}^{2s}(\Omega)}:=\|(-\Delta)^{s}v\|_{L^{2}(\Omega)}^{2}.\label{eq:def-norm}
\end{equation}
\begin{lem}
The space $X_{0}^{2s}(\Omega)$ (equipped with the norm given in \eqref{eq:def-norm}) is complete. 
\end{lem}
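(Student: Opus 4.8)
The plan is to exhibit $(-\Delta)^{s}$ as an isometric isomorphism from $X_{0}^{2s}(\Omega)$ onto $L^{2}(\Omega)$ and then transport completeness from $L^{2}(\Omega)$. First I would record that $\|(-\Delta)^{s}\bullet\|_{L^{2}(\Omega)}$ really is a norm on $X_{0}^{2s}(\Omega)$: homogeneity and the triangle inequality are inherited from $L^{2}(\Omega)$, and positive-definiteness follows from the isomorphism property $(-\Delta)^{s}\colon H_{0}^{s}(\Omega)\to H^{-s}(\Omega)$ recalled before Lemma~\ref{lem:solvability}, since $(-\Delta)^{s}v=0$ then forces $v=0$. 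I would also note the continuous embedding $L^{2}(\Omega)\hookrightarrow H^{-s}(\Omega)$: it is the dual of the continuous embedding $H_{0}^{s}(\Omega)\hookrightarrow L^{2}(\Omega)$ that is immediate from the definition \eqref{eq:sobolev-norm} of the norm, after the usual identification of $L^{2}(\Omega)$ with its own dual.

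Next, let $\{u_{j}\}_{j\in\N}\subset X_{0}^{2s}(\Omega)$ be a Cauchy sequence with respect to \eqref{eq:def-norm}. Then $\{(-\Delta)^{s}u_{j}\}_{j\in\N}$ is Cauchy in $L^{2}(\Omega)$, hence converges to some $g\in L^{2}(\Omega)$. Regarding $g$ as an element of $H^{-s}(\Omega)$ via the embedding above, Lemma~\ref{lem:solvability} provides a unique $u\in H_{0}^{s}(\Omega)$ with $(-\Delta)^{s}u=g$ in $\Omega$; since $g\in L^{2}(\Omega)$, this says precisely that $(-\Delta)^{s}u\in L^{2}(\Omega)$, i.e.\ $u\in X_{0}^{2s}(\Omega)$. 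Finally, $\|u_{j}-u\|_{X_{0}^{2s}(\Omega)}=\|(-\Delta)^{s}u_{j}-(-\Delta)^{s}u\|_{L^{2}(\Omega)}=\|(-\Delta)^{s}u_{j}-g\|_{L^{2}(\Omega)}\to 0$, so the sequence converges in $X_{0}^{2s}(\Omega)$.

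There is no substantial obstacle in this argument; the only two points needing a word of care are the positive-definiteness of the norm and the embedding $L^{2}(\Omega)\hookrightarrow H^{-s}(\Omega)$, and both reduce to facts already available in the excerpt (the isomorphism property of $(-\Delta)^{s}$ and the definition of $\|\bullet\|_{H_{0}^{s}(\Omega)}$). In essence, completeness of $X_{0}^{2s}(\Omega)$ is just the completeness of $L^{2}(\Omega)$ pulled back along the linear isometry $(-\Delta)^{s}$.
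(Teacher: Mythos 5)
Your proof is correct and follows essentially the same strategy as the paper's: both obtain the limit candidate by noting that the images under $(-\Delta)^{s}$ converge in $L^{2}(\Omega)$, invoke Lemma~\ref{lem:solvability} (via the embedding $L^{2}(\Omega)\hookrightarrow H^{-s}(\Omega)$) to produce $u\in H_{0}^{s}(\Omega)$ with $(-\Delta)^{s}u$ equal to that limit, and conclude convergence in $X_{0}^{2s}(\Omega)$. Your version is in fact slightly more direct, since the paper's intermediate factorization $(-\Delta)^{s}=(-\Delta)^{s/2}(-\Delta)^{s/2}$ and the detour through convergence of $(-\Delta)^{s/2}u_{n}$ in $H_{0}^{s}(\Omega)$ are not needed once one works with the $L^{2}$-limit $g$ directly.
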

\begin{proof}
Let $u_{n}$ be a Cauchy sequence in $X_{0}^{2s}(\Omega)$. Using the fact that $(-\Delta)^{s} = (-\Delta)^{s/2}(-\Delta)^{s/2}$, we know that $(-\Delta)^{s/2}u_{n}$ is also a Cauchy sequence in the Hilbert space $H_{0}^{s}(\Omega)$. Then, there exists a $v\in H_{0}^{s}(\Omega)$ such that $(-\Delta)^{s/2}u_{n} \rightarrow v$ in $H_{0}^{s}(\Omega)$, which implies that 
\[
\| (-\Delta)^{s}u_{n} - (-\Delta)^{s/2}v \|_{L^{2}(\Omega)} \rightarrow 0
\]
as $n \rightarrow \infty$. Because $(-\Delta)^{s/2}v \in L^{2}(\Omega) \subset H^{-s}(\Omega)$, using Lemma~\ref{lem:solvability}, there exists a unique $u \in H_{0}^{s}(\Omega)$ such that $(-\Delta)^{s}u=(-\Delta)^{s/2}v$. Hence, 
\begin{align*}
\|u_{n} - u\|_{X_{0}^{2s}(\Omega)} &= \| (-\Delta)^{s}(u_{n} - u) \|_{L^{2}(\Omega)} \\
& = \| (-\Delta)^{s} u_{n} - (-\Delta)^{s/2}v \|_{L^{2}(\Omega)} \\
& = \| (-\Delta)^{s/2}((-\Delta)^{s/2}u_{n} - v) \|_{L^{2}(\Omega)} \\
& \le C \| (-\Delta)^{s/2}u_{n} - v \|_{H_{0}^{s}(\Omega)} \rightarrow 0. 
\end{align*}
Therefore, $u_{n} \rightarrow u$ in $X_{0}^{2s}(\Omega)$; hence, we obtain the lemma.  
\end{proof}

See also \cite{CHM17,LM72} for more details regarding the fractional Sobolev spaces. However, we would like to emphasize that for the spectral fractional Laplacian (Navier fractional Laplacian) of order $\gamma \in \mathbb{R}_{+}$, 
\[
\{v \in H^{\gamma}(\mathbb{R}^{n}) : {\rm supp} \, (v) \subset \overline{\Omega}\} \subsetneq \mathscr{D}((-\Delta)_{{\rm Navier}}^{\gamma})
\]
for $\gamma \ge 3/2$, see e.g. \cite[Lemma 2]{MN16}. 

\begin{rem}\label{rem:equivalent-def-frac-space}
	In \cite{MN16,MN17}, the authors considered the space $\tilde{H}^{s}(\Omega) = \{ u \in H^{s}(\mathbb{R}^{n}) : u = 0 \text{ in } \mathbb{R}^{n} \setminus \overline{\Omega} \}$. The space $H^{s}(\mathbb{R}^{n})$ can be defined via Fourier transform; see also \cite[Proposition 3.4]{DPV12} for other equivalent definitions of the space $H^{s}(\mathbb{R}^{n})$. Indeed, $\tilde{H}^{s}(\Omega)=H_{0}^{s}(\Omega)$ for all $s\in (0,1)$ with $s\neq1/2$; see \cite[Proposition 1]{MN16}. For $s=1/2$, we know that $\tilde{H}^{1/2}(\Omega)$ is dense in $H_{0}^{1/2}(\Omega)$ without assumption of \eqref{eq:Lions-Magnes-condition}. From \eqref{eq:Lions-Magnes-condition}, using the interpolation relation in \cite[equation (2.8) and the next unlabeled equation]{NOS15}, $\tilde{H}^{1/2}(\Omega)=H_{0}^{1/2}(\Omega)$. Here, our notation $H_{0}^{1/2}(\Omega)$ is the Lions-Magenes space, which is usually denoted by $H_{00}^{1/2}(\Omega)$. See also \cite[Theorem 3.33]{McL00} for the case when $s\neq1/2$; the necessity of \eqref{eq:Lions-Magnes-condition} is also illustrated by an example in \cite[Exercise 3.22]{McL00}. 
 
\end{rem}

\subsection{Lewy-Stampacchia type inequality (see, e.g., \cite{LS69})}

In this subsection, we state the main result of this study. For each $\lambda \ge 0$, we define
the unilateral constraint $K_{0}$ by 
\[
K_{0}:=\{v\in H_{0}^{s}(\Omega):v\ge\psi\text{ a.e. in }\Omega\},\quad\text{where }\psi\in H_{0}^{s}(\Omega),
\]
which is a closed convex subset of $H_{0}^{s}(\Omega)$. We define the mapping $A\in B(H_{0}^{s}(\Omega),H^{-s}(\Omega))$ by 
\[
\langle Av,w\rangle:=\int_{\Omega}\bigg[((-\Delta)^{s/2}v)((-\Delta)^{s/2}w) +\lambda vw\bigg]\,dx\quad\text{for all }v,w\in H_{0}^{s}(\Omega),
\]
that is, $A=(-\Delta)^{s}+\lambda$ in a weak form. Because
$(-\Delta)^{s}\in{\rm Isom}(H_{0}^{s}(\Omega),H^{-s}(\Omega))$, $A\in{\rm Isom}(H_{0}^{s}(\Omega),H^{-s}(\Omega))$
for all $\lambda\ge0$. 

For each $f\in H^{-s}(\Omega)$, we also define a functional $J$
on $H_{0}^{s}(\Omega)$ by 
\[
J(v):=\frac{1}{2}\langle Av,v\rangle-\langle f,v\rangle. 
\]
Now we further assume that 
\begin{equation}
f\in L^{2}(\Omega)\quad\text{and}\quad\psi\in H_{0}^{s}(\Omega).\label{eq:LS-assump1}
\end{equation}
Moreover, we also assume that 
\begin{equation}
(-\Delta)^{s}\psi\in\mathcal{M}(\overline{\Omega}) \quad \text{and} \quad [(-\Delta)^{s}\psi]_{+}\in L^{2}(\Omega),\label{eq:LS-assump2}
\end{equation}
where $\mathcal{M}(\overline{\Omega})$ denotes the set of signed
Radon measures on $\overline{\Omega}$, and $[\bullet]_{+}$ represents the positive part function,
given by $[\alpha]_{+}=\max\{\alpha,0\}$ for all $\alpha\in\mathbb{R}$. From \eqref{eq:LS-assump2}, we have 
\begin{equation*}
	\hat{f}:=A\psi\in\mathcal{M}(\overline{\Omega})\quad\text{and}\quad[\hat{f}]_{+}=[A\psi]_{+}\in L^{2}(\Omega),
\end{equation*}
More precisely, because $\mathcal{C}(\overline{\Omega})\cap H_{0}^{s}(\Omega)$
is dense in $H_{0}^{s}(\Omega)$, the assumption $\hat{f}\in\mathcal{M}(\overline{\Omega})$
means that there exists $\mu\in\mathcal{M}(\overline{\Omega})$ such
that 
\[
\langle\hat{f},w\rangle=\int_{\overline{\Omega}}w\,d\mu\quad\text{for all }w\in\mathcal{C}(\overline{\Omega})\cap H_{0}^{s}(\Omega).
\]
The assumption $[\hat{f}]_{+}\in L^{2}(\Omega)$ means that the positive part $\mu_{+}$ of $\mu$ is absolutely continuous (with respect to the Lebesgue measure) with an $L^{2}$-Radon-Nikodym derivative (density function). From the assumptions \eqref{eq:LS-assump1} and \eqref{eq:LS-assump2}, together with Lemma~\ref{lem:A1}, we know that $[\hat{f}-f]_{+}\in L^{2}(\Omega)$; hence, 
\begin{align}\label{maxff}
\max\{f,\hat{f}\}=[\hat{f}-f]_{+}+f\in L^{2}(\Omega).
\end{align}
Now we are ready to state our main result. 
\begin{thm}[Lewy-Stampacchia type inequality] \label{thm:LS-ineq}Assume that
$f\in H^{-s}(\Omega)$ and $\psi\in H_{0}^{s}(\Omega)$. Then, there exists a unique $u\in H_{0}^{s}(\Omega)$ that satisfies the following variational inequality 
\begin{equation}
\langle Au,v-u\rangle\ge\langle f,v-u\rangle\quad\text{for all }v\in K_{0}.\label{eq:LS-ineq1}
\end{equation}
If we additionally assume \eqref{eq:LS-assump1} and \eqref{eq:LS-assump2},
we have 
\begin{equation}
u\in X_{0}^{2s}(\Omega)\quad\text{and}\quad f\le Au\le\max\{f,\hat{f}\}\quad\text{a.e. in }\Omega.\label{eq:LS-ineq2}
\end{equation}
\end{thm}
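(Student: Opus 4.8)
The plan is to split the argument into two independent parts, exactly as the statement is phrased. For the first part (existence and uniqueness of the solution to the variational inequality \eqref{eq:LS-ineq1}), I would invoke the classical theory of variational inequalities for monotone, coercive, bounded operators: since $A\in{\rm Isom}(H_{0}^{s}(\Omega),H^{-s}(\Omega))$ it is in particular bounded, and from \eqref{eq:duality-pair} the bilinear form $\langle A\bullet,\bullet\rangle$ is coercive on $H_{0}^{s}(\Omega)$ (adding $\lambda\|\bullet\|_{L^2}^2\ge0$ only helps); the set $K_{0}$ is nonempty, closed and convex. The Lions--Stampacchia theorem then gives a unique $u\in K_{0}$ satisfying \eqref{eq:LS-ineq1}. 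Equivalently, $u$ is the unique minimizer of $J$ over $K_{0}$, a characterization I will use in the second part. This first part requires no assumption beyond $f\in H^{-s}(\Omega)$, $\psi\in H_{0}^{s}(\Omega)$.

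For the second part I would work under \eqref{eq:LS-assump1} and \eqref{eq:LS-assump2} and follow the penalization/comparison scheme that underlies the Lewy--Stampacchia inequality. The lower bound $f\le Au$ a.e.\ is the easy half: testing \eqref{eq:LS-ineq1} with $v=u+\varphi$ for arbitrary $0\le\varphi\in H_{0}^{s}(\Omega)$ (which lies in $K_{0}$ since $u\ge\psi$) gives $\langle Au-f,\varphi\rangle\ge0$, i.e.\ $Au-f\ge0$ in the sense of distributions; once we know $Au\in L^2(\Omega)$ this becomes $Au\ge f$ a.e. For the upper bound, the standard device is to introduce, for $\varepsilon>0$, the penalized problem
\[
Au_{\varepsilon}+\tfrac{1}{\varepsilon}[u_{\varepsilon}-\psi]_{-}=\max\{f,\hat f\}\quad\text{in }\Omega,
\]
where $[\alpha]_{-}=\max\{-\alpha,0\}$, solved in $H_{0}^{s}(\Omega)$ by monotonicity (the map $v\mapsto Av+\tfrac1\varepsilon[v-\psi]_{-}$ is still coercive and maximal monotone, and the right-hand side lies in $L^2(\Omega)$ by \eqref{maxff}). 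The crucial pointwise bounds are $f\le Au_{\varepsilon}\le\max\{f,\hat f\}$ a.e., obtained by comparison: subtracting the equation for $u_\varepsilon$ from $A\psi=\hat f\le\max\{f,\hat f\}$ and testing with $[u_\varepsilon-\psi]_-$ shows $u_\varepsilon\ge\psi$ is impossible to violate too much — more precisely one shows $[u_\varepsilon-\psi]_-$ is controlled and that $Au_\varepsilon\le\max\{f,\hat f\}$, while $Au_\varepsilon=\max\{f,\hat f\}-\tfrac1\varepsilon[u_\varepsilon-\psi]_-\ge\cdots\ge f$ by a second comparison against the solution of $Aw=f$. These $L^2$ bounds on $Au_\varepsilon$ are uniform in $\varepsilon$, so along a subsequence $Au_\varepsilon\rightharpoonup g$ weakly in $L^2(\Omega)$ and $u_\varepsilon\rightharpoonup u_*$ weakly in $H_0^s(\Omega)$ with $(-\Delta)^s u_*=g-\lambda u_*\in L^2$, hence $u_*\in X_0^{2s}(\Omega)$; the penalization term $\tfrac1\varepsilon[u_\varepsilon-\psi]_-\to0$ in $L^2$ forces $u_*\ge\psi$, and a standard argument (testing the penalized equation with $u_\varepsilon-v$ for $v\in K_0$ and passing to the limit, using $[u_\varepsilon-\psi]_-\cdot(u_\varepsilon-v)\le[u_\varepsilon-\psi]_-\cdot(\psi-v)\le0$) shows $u_*$ solves the variational inequality \eqref{eq:LS-ineq1}. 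By the uniqueness from the first part, $u_*=u$, so $u\in X_0^{2s}(\Omega)$ and the pointwise bounds pass to the limit to give $f\le Au\le\max\{f,\hat f\}$ a.e.

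The main obstacle, and the place where the nonlocality of $(-\Delta)^s$ really bites, is justifying the comparison arguments at the level of the penalized equation — in particular that $[u_\varepsilon-\psi]_-$ and $[u_\varepsilon-\psi]_+$ are legitimate test functions in $H_0^s(\Omega)$ and that the key inequality $\langle(-\Delta)^s\psi,[u_\varepsilon-\psi]_-\rangle\le\int[(-\Delta)^s\psi]_+\,[u_\varepsilon-\psi]_-\,dx$ holds despite $(-\Delta)^s\psi$ being only a Radon measure. This is where I expect to lean on the Caffarelli--Stinga extension: lifting $\psi$ and $u_\varepsilon-\psi$ to their harmonic extensions $V(\psi)$, $V(u_\varepsilon-\psi)$ in the weighted half-cylinder via Lemma~\ref{lem:minimizing-problem}, the truncation $[\,\cdot\,]_\pm$ becomes a Lipschitz (hence admissible) operation on the extension, and the Neumann-trace identity \eqref{eq:CS-ext} together with the sign of $y^{1-2s}V_y$ on $\{V=0\}$ yields the needed measure-theoretic inequalities (this is presumably the content of the auxiliary Lemma~\ref{lem:A1} already cited in \eqref{maxff}). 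Once these extension-based comparison lemmas are in hand, the rest is the routine penalization limit sketched above.
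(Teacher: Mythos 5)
Your first part (existence and uniqueness via Stampacchia's theorem, and the easy bound $f\le Au$ once $Au\in L^{2}(\Omega)$ is known) is fine and matches the paper. The penalization scheme in the second part, however, has a genuine flaw on two levels. First, the map $v\mapsto Av+\tfrac1\varepsilon[v-\psi]_{-}$ is \emph{not} monotone: $t\mapsto[t]_{-}=\max\{-t,0\}$ is nonincreasing, so the penalty term is antitone and for small $\varepsilon$ the operator is neither monotone nor coercive (where $u_\varepsilon<\psi$ your equation $Au_\varepsilon=\max\{f,\hat f\}-\tfrac1\varepsilon[u_\varepsilon-\psi]_{-}$ pushes $u_\varepsilon$ further \emph{down}, not up), so the claimed existence ``by monotonicity'' fails. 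Second, and more fundamentally, even after fixing the sign (i.e. $Au_\varepsilon-\tfrac1\varepsilon[u_\varepsilon-\psi]_{-}=\max\{f,\hat f\}$), what you have written is the standard penalization of the obstacle problem with force $g:=\max\{f,\hat f\}$, not with force $f$: your own limiting argument yields $\langle Au_*-g,v-u_*\rangle\ge0$ for all $v\in K_{0}$, which is not \eqref{eq:LS-ineq1}, and since $g\ge f$ the discrepancy $\langle g-f,v-u_*\rangle$ has no sign. The two obstacle problems genuinely have different solutions: already for $s=1$, $n=1$, $\Omega=(0,1)$, $f=0$ and $\psi$ a smooth nonnegative bump supported in $(1/8,3/8)$, the solution $u$ of \eqref{eq:LS-ineq1} is the concave envelope of $\psi$, and on part of the non-contact set (just outside the contact interval, where $\psi$ is still concave) one has $Au=0<\hat f=g$; hence $u$ fails the necessary condition $Au\ge g$ for the variational inequality with force $g$, so your $u_*$ (which solves it) cannot equal $u$, and the concluding step ``by uniqueness, $u_*=u$'' is invalid. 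A related symptom: if $\tfrac1\varepsilon[u_\varepsilon-\psi]_{-}\to0$ strongly in $L^{2}$ as you assert, the limit equation would be $Au_*=g$ with no reaction term at all, which is false whenever the constraint is active.

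To make a penalty proof work you must keep $f$ on the right-hand side, e.g. $Au_\varepsilon=f+[\hat f-f]_{+}\,\theta_\varepsilon(u_\varepsilon-\psi)$ with a bounded cut-off $\theta_\varepsilon$, or $Au_\varepsilon-\tfrac1\varepsilon[u_\varepsilon-\psi]_{-}=f$, so that the limit is \eqref{eq:LS-ineq1}; then the lower bound is trivial and all the difficulty moves to the upper bound $Au_\varepsilon\le\max\{f,\hat f\}$, where one needs the T-monotonicity $\langle(-\Delta)^{s}w_{+},w_{-}\rangle\le0$ (the paper's Lemma~\ref{lem:MN17}, proved via the Caffarelli--Silvestre extension, exactly the tool you anticipate) together with the density of nonnegative smooth functions (Lemma~\ref{densitylemma}) in order to pair test functions with the measure $[\hat f-f]_{-}$. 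The paper avoids penalization altogether: following Gustafsson, it introduces the dual constraint set $K_{2}=\{v\in H_{0}^{s}(\Omega):f\le Av\le\max\{f,\hat f\}\text{ in }H^{-s}(\Omega)\}$, solves the variational inequality with datum $\hat f$ over $K_{2}$, shows by a truncation argument plus Lemma~\ref{lem:MN17} and Lemma~\ref{densitylemma} that this solution lies in $K_{0}$ and coincides with $u$ (Lemma~\ref{lem:LS2}), and obtains $u\in X_{0}^{2s}(\Omega)$ from the inclusion $K_{2}\subset X_{0}^{2s}(\Omega)$ (Lemma~\ref{cor:inclusion}).
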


We next give a comparison theorem for variational inequalities of obstacle type. 
\begin{thm}
	[Comparison principle for variational inequality]\label{thm:Comparison-var}
	For $i=1,2$, let $(f_{i},\psi_{i})\in H^{-s}(\Omega)\times H_{0}^{s}(\Omega)$,
	and set 
	\begin{align*}
		K_{0}^{i} & :=\{v\in H_{0}^{s}(\Omega):v\ge\psi_{i}\text{ a.e. in }\Omega\},\\
		K_{1}^{i} & :=\{v\in H_{0}^{s}(\Omega):Av\ge f_{i}\text{ in }H^{-s}(\Omega)\}.
	\end{align*}
	Let $u_{i}\in H_{0}^{s}(\Omega)$ be the unique solution of the variational
	inequality: 
	\[
	u_{i}\in K_{0}^{i}:\quad\langle Au_{i},v-u_{i}\rangle\ge\langle f_{i},v-u_{i}\rangle\quad\text{for all }v\in K_{0}^{i}.
	\]
	If $f_{1}\le f_{2}$ in $H^{-s}(\Omega)$ and $\psi_{1}\le\psi_{2}$
	a.e. in $\Omega$, then it holds that $u_{1}\le u_{2}$ a.e. in $\Omega$. 
\end{thm}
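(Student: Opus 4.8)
The plan is to prove the comparison principle by the standard technique of testing each variational inequality with an admissible competitor built from the other solution, and then exploiting the monotonicity of $A$ together with the structure of the constraint sets. Concretely, I would first check that $v_{1}:=\min\{u_{1},u_{2}\}$ is an admissible competitor in $K_{0}^{1}$: since $u_{1}\ge\psi_{1}$ and $u_{2}\ge\psi_{2}\ge\psi_{1}$ a.e., we get $\min\{u_{1},u_{2}\}\ge\psi_{1}$, and $\min\{u_{1},u_{2}\}\in H_{0}^{s}(\Omega)$ because $H_{0}^{s}(\Omega)$ is a lattice (truncation is continuous on $H_{0}^{s}(\Omega)$ — this is where the fractional-Sobolev Stampacchia lemma enters). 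Symmetrically, $v_{2}:=\max\{u_{1},u_{2}\}$ is admissible in $K_{0}^{2}$ because $\max\{u_{1},u_{2}\}\ge u_{2}\ge\psi_{2}$. I would then plug $v=v_{1}$ into the variational inequality for $u_{1}$ and $v=v_{2}$ into the one for $u_{2}$.

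Next I would add the two resulting inequalities. Writing $w:=(u_{1}-u_{2})_{+}=\max\{u_{1},u_{2}\}-u_{2}=u_{1}-\min\{u_{1},u_{2}\}$, we have $v_{1}-u_{1}=-w$ and $v_{2}-u_{2}=w$, so the sum of the two tested inequalities becomes
\[
\langle Au_{2}-Au_{1},w\rangle\ge\langle f_{2}-f_{1},w\rangle.
\]
Since $f_{1}\le f_{2}$ in $H^{-s}(\Omega)$ and $w\ge0$ with $w\in H_{0}^{s}(\Omega)$, the right-hand side is $\ge0$, hence $\langle A(u_{2}-u_{1}),w\rangle\le 0$. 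On the other hand, from the definition of $A$ one has $\langle A\varphi,\varphi_{+}\rangle=\langle A\varphi_{+},\varphi_{+}\rangle\ge c\|\varphi_{+}\|_{H_{0}^{s}(\Omega)}^{2}$ by coercivity (using $\langle A\varphi_{-},\varphi_{+}\rangle\le 0$, which follows from $\int_\Omega \varphi_+\varphi_- = 0$ and the corresponding nonpositivity of the Dirichlet-type form on $(-\Delta)^{s/2}$; alternatively one invokes the extension characterization in Lemma~\ref{lem:minimizing-problem}). Applying this with $\varphi=u_{1}-u_{2}$ gives $c\|w\|_{H_{0}^{s}(\Omega)}^{2}\le\langle A(u_{1}-u_{2}),w\rangle\le 0$, so $w=0$, i.e. $u_{1}\le u_{2}$ a.e. in $\Omega$.

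The main obstacle is the step $\langle A(u_{1}-u_{2}),(u_{1}-u_{2})_{+}\rangle\ge c\,\|(u_{1}-u_{2})_{+}\|_{H_{0}^{s}(\Omega)}^{2}$, i.e. establishing the ``T-monotonicity'' of the operator $A=(-\Delta)^{s}+\lambda$ on $H_{0}^{s}(\Omega)$. This is not as transparent as in the local case because the bilinear form associated with $(-\Delta)^{s/2}$ is nonlocal, so one cannot argue pointwise on gradients. I expect to handle it via the Caffarelli--Silvestre extension: if $\Phi=V(\varphi)$ is the minimizing extension of $\varphi$ given by Lemma~\ref{lem:minimizing-problem}, then $\Phi_{+}$ is an admissible competitor in $\mathscr{W}^{s}(\varphi_{+})$, whence $\mathscr{E}^{s}(V(\varphi_{+}))\le\mathscr{E}^{s}(\Phi_{+})=\int_{\{\Phi>0\}}y^{1-2s}|\nabla_{x,y}\Phi|^{2}$, and a second competitor argument applied to $\varphi$ versus $\min\{\varphi,0\}$ together with $\mathscr{E}^{s}(\Phi)=\mathscr{E}^{s}(\Phi_{+})+\mathscr{E}^{s}(\Phi_{-})$ yields $\langle(-\Delta)^{s}\varphi,\varphi_{+}\rangle=\mathscr{E}^{s}(V(\varphi_{+}))=\langle(-\Delta)^{s}\varphi_{+},\varphi_{+}\rangle$; adding the trivially nonnegative $\lambda\int_\Omega\varphi\varphi_{+}=\lambda\int_\Omega\varphi_{+}^{2}$ and using the norm equivalence \eqref{eq:norm-equiv}–\eqref{eq:duality-pair} gives the coercive lower bound. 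Once this lemma is in place the rest is the routine bookkeeping above; I would state the T-monotonicity as a separate short lemma before proving Theorem~\ref{thm:Comparison-var} (and note it also re-proves uniqueness of the variational inequality).
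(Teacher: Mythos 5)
Your proposal is correct and follows essentially the paper's own route: the paper reduces to exactly the same inequality $\langle A(u_{1}-u_{2}),[u_{1}-u_{2}]_{+}\rangle\le0$ (via its Lemma~\ref{lem:KS80-lemma1}, testing the inequality for $u_{1}$ with $\min\{u_{1},u_{2}\}$ and using $Au_{2}\ge f_{2}\ge f_{1}$, rather than your symmetric testing with $\min$ and $\max$), and the decisive ingredient you isolate---T-monotonicity of $A$, i.e. $\langle(-\Delta)^{s}\varphi_{+},\varphi_{-}\rangle\le0$ proved through the Caffarelli--Silvestre extension and the splitting $\mathscr{E}^{s}(\Phi)=\mathscr{E}^{s}(\Phi_{+})+\mathscr{E}^{s}(\Phi_{-})$---is precisely the paper's Lemma~\ref{lem:MN17}. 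One small correction: the claimed identity $\langle(-\Delta)^{s}\varphi,\varphi_{+}\rangle=\langle(-\Delta)^{s}\varphi_{+},\varphi_{+}\rangle$ is false in general for this nonlocal operator (only ``$\ge$'' holds, equivalently $\langle(-\Delta)^{s}\varphi_{-},\varphi_{+}\rangle\le0$, which is exactly what the competitor argument yields), but since your conclusion only uses that inequality, the proof stands.
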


Indeed, $u$ satisfies the inequality \eqref{eq:LS-ineq1} if and only if 
\[
J(u)=\min_{v\in K_{0}}J(v).
\]
See Lemma~\ref{lem:LS1} for details.  

The inequality in \eqref{eq:LS-ineq2} is called the Lewy-Stampacchia inequality. In the case of the second-order elliptic operator including the standard Laplacian ($s=1$), it was proved in \cite[Theorem 3.1]{LS69} using a penalty method to show the $W^{2,p}$-regularity of a solution for a unilateral variational inequality. Please also see \cite{KS80}.  

In 1986, Gustafsson \cite{Gus86} provided an alternative simpler proof for the Lewy-Stampacchia inequality by introducing an equivalent variational inequality problem, which corresponds to Lemma~\ref{lem:LS2}(g) below. Gustafsson's method was employed in \cite{AK19} to prove the Lewy-Stampacchia inequality for variational inequality with a mixed boundary condition. They also used it to construct a strong solution to a unidirectional diffusion equation. In this study, we use Gustafsson's approach \cite{Gus86} to prove Theorem~\ref{thm:LS-ineq}. 

In \cite{AR18}, they consider a class of fractional obstacle type problem related to the fractional elliptic operator $(-\nabla \cdot A(x) \nabla + c(x))^{s}$. In addition, they also provide a solution algorithm for numerical implementation. A Lewy-Stampacchia inequality for $-D^{s} \cdot( A(x) D^{s} u)$, where $D^{s}$ is the distributional Riesz fractional gradient (a.k.a. $s$-gradient) and $D^{s} \cdot$ is the $s$-divergence, was derived in \cite{LR21}. Similar results were proved in \cite{MNS17}. Here we would like to point out that the operator considered in \cite{MNS17} is not equivalent to the one in this paper. 

The inequality \eqref{eq:LS-ineq1} for $s=1$ is the simplest form of the classical obstacle problem; see, e.g., \cite{KS80}. Here, we recall a classical example of obstacle problems: an elastic membrane, with vertical displacement $u$ on a domain $\Omega$, which is constrained
at its boundary $u=0$ along $\partial\Omega$ and is forced to
lie above some obstacle $u\ge\psi$. In \cite{SV13}, they considered a similar problem, but the fractional
operator they used is different from ours; see, e.g., \cite{DWZ19,MN14,MN16}. 

\subsection{Main difficulties}
For the classical Laplacian case in \cite{AK19}, the following observation played a crucial role: 
\begin{equation}
\langle -\Delta u_{+},u_{-} \rangle = \int_{\Omega} \nabla u_{+} \cdot \nabla u_{-} \,dx = 0 \label{eq:difficulties1}
\end{equation} 
for $u \in H_{0}^{1}(\Omega)$. However, in the fractional Laplacian case, the support of $(-\Delta)^{s} u_{+}$ is larger than that of $u_{+}$. This means \eqref{eq:difficulties1} is not valid. Therefore, we solve this problem using a technique from \cite{MN17}; see Lemma~\ref{lem:MN17}. 

The operator $(-\Delta)^{s}$ is nonlocal. Caffarelli, Silvestre, and Stinga \cite{CS07,CS16} localized the operator in a higher-dimensional space, gained some
regularity, and also verified the formal computations in a rigorous manner, which is crucial to our proof. See also \cite{Rul15,RW19},
which study some ``local'' properties of the ``nonlocal'' operator
$(-\Delta)^{s}$, using the Caffarelli-Silvestre-type extension.

\subsection{Well-Posedness of Anomalous Unidirectional Diffusion Equations}

We also can apply the Lewy-Stampacchia-type inequality above and modify
some ideas of \cite{AK19} to prove the well-posedness of anomalous unidirectional diffusion equations. For $s\in(0,1)$, we
consider the following initial-boundary value problem for the unidirectional
evolution equation of fractional-diffusion type: 
\begin{subequations}
\begin{align}
\partial_{t}u & =[-(-\Delta)^{s}u+f]_{+}\quad\text{in }\Omega\times(0,T],\label{eq:diff1}\\
u & =0\quad\text{on }\partial\Omega\times(0,T] \quad \text{if }s \in (1/2,1),\label{eq:diff-Dirichlet}\\
u|_{t=0} & =u_{0}\quad\text{in }\Omega,\label{eq:diff-initial}
\end{align}
\end{subequations}
where $\partial_{t}=\frac{\partial}{\partial t}$, $f=f(x,t)$ and $u_{0}=u_{0}(x)$ are given functions. If $u$ is not sufficiently regular, the trace may not be well-defined; see \eqref{eq:tracea}, \eqref{eq:traceb}, and \eqref{eq:tracec}. In this case, we simply omit the Dirichlet boundary condition \eqref{eq:diff-Dirichlet}. Because the fractional Laplacian is defined by the approximation of \eqref{eq:spectral-definition}, the boundary conditions will recover in the case when we recover the regularity of the solution $u$.  
For simplicity, here and after,
we denote $u(t)=u(\bullet,t)$ and $f(t)=f(\bullet,t)$, if no ambiguity
occurs. 

Now, we define the precise meaning of the strong solution to \eqref{eq:diff1},
\eqref{eq:diff-Dirichlet}, and \eqref{eq:diff-initial}: 
\begin{defn}
\label{def:sol}For a given $f\in L^{2}(0,T;L^{2}(\Omega))$
and $u_{0}\in H_{0}^{s}(\Omega)$, we say that $u$ is a \emph{strong
solution} to \eqref{eq:diff1}, \eqref{eq:diff-Dirichlet}, and \eqref{eq:diff-initial}, if 
\begin{enumerate}
\item [(a)] $u\in H^{1}(0,T;L^{2}(\Omega))\cap L^{2}(0,T;X_{0}^{2s}(\Omega))$, 
\item [(b)] $\partial_{t}u(t)=[-(-\Delta)^{s}u(t)+f(t)]_{+}$ in $L^{2}(\Omega)$ for a.e. $t\in(0,T)$; 
\item [(c)] $u(0)=u_{0}$. 
\end{enumerate}
\end{defn}

\begin{rem}
The operator $[\bullet]_{+}$ is defined in a pointwise manner. The $L^{2}(\Omega)$
assumption can guarantee that the function is finite a.e. in $\Omega$. Therefore, it is crucial
to show $(-\Delta)^{s}u(t)\in L^{2}(\Omega)$ for a.e. $t\in(0,T)$, that is, $u(t)\in X_{0}^{2s}(\Omega)$ for a.e. $t\in(0,T)$.
From the inclusion $H^{1}(0,T;L^{2}(\Omega))\subset\mathcal{C}([0,T];L^{2}(\Omega))$, the initial data at $t=0$ are well defined. 
\end{rem}

First of all, we state the uniqueness and stability of the strong
solution to \eqref{eq:diff1}, \eqref{eq:diff-Dirichlet}, and \eqref{eq:diff-initial}. 
\begin{thm}
[Uniqueness and stability results] \label{thm:Uniqueness}Given $f\in L^{2}(0,T;L^{2}(\Omega))$
and $u_{0}\in H_{0}^{s}(\Omega)$. Then, the strong solution to \eqref{eq:diff1},
\eqref{eq:diff-Dirichlet}, and \eqref{eq:diff-initial} is unique,
and the solution is continuous with respect to $f$ and $u_{0}$ in the following sense: 
\begin{align}
& \|u_{1}-u_{2}\|_{H^{1}(0,T;L^{2}(\Omega))\cap L^{\infty}(0,T;H_{0}^{s}(\Omega))} \nonumber \\
\le & C\bigg[\|u_{1}(\bullet,0)-u_{2}(\bullet,0)\|_{H_{0}^{s}(\Omega)}+\|f_{1}-f_{2}\|_{L^{2}(0,T;L^{2}(\Omega))}\bigg], \label{eq:conti-sol}
\end{align}
where $u_{1},u_{2}$ are strong
solutions of \eqref{eq:diff1}, \eqref{eq:diff-Dirichlet}, and \eqref{eq:diff-initial},
with initial conditions $u_{1,0},u_{2,0}\in H_{0}^{s}(\Omega)$, respectively. 
\end{thm}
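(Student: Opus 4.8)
I structure the argument around the stability estimate \eqref{eq:conti-sol}; uniqueness is the special case $f_1=f_2$, $u_1(\bullet,0)=u_2(\bullet,0)$. Let $u_1,u_2$ be strong solutions in the sense of Definition~\ref{def:sol} with data $(f_1,u_{1,0})$ and $(f_2,u_{2,0})$, put $w:=u_1-u_2$, and set $g_i(t):=-(-\Delta)^s u_i(t)+f_i(t)$. By Definition~\ref{def:sol}(a), for a.e. $t\in(0,T)$ we have $g_i(t)\in L^2(\Omega)$, $\partial_t u_i(t)=[g_i(t)]_+\in L^2(\Omega)$, and $w(t)\in X_0^{2s}(\Omega)$. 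Writing $[r]_-:=\max\{-r,0\}=[-r]_+$, the elementary identity $[r]_+=r+[r]_-$ turns Definition~\ref{def:sol}(b) into $\partial_t u_i+(-\Delta)^s u_i=f_i+[g_i]_-$ in $L^2(\Omega)$ for a.e. $t$, and subtracting the two equations gives
\begin{equation}
\partial_t w+(-\Delta)^s w=(f_1-f_2)+\big([g_1]_--[g_2]_-\big)\quad\text{in }L^2(\Omega)\text{ for a.e. }t.\label{eq:plan-diff}
\end{equation}

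Next I take the $L^2(\Omega)$ inner product of \eqref{eq:plan-diff} with $\partial_t w(t)\in L^2(\Omega)$. The sign of the $[\,\cdot\,]_-$ term is controlled by the complementarity built into the equation: $\partial_t w=[g_1]_+-[g_2]_+$ by Definition~\ref{def:sol}(b), and $[g_i]_+[g_i]_-=0$ pointwise, so expanding the product shows
\[
\int_\Omega\big([g_1]_--[g_2]_-\big)\,\partial_t w\,dx=-\int_\Omega[g_1]_-[g_2]_+\,dx-\int_\Omega[g_2]_-[g_1]_+\,dx\le0,
\]
since all four functions are nonnegative (this is just the monotonicity of $r\mapsto[r]_+$). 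For the elliptic term, $w\in L^2(0,T;X_0^{2s}(\Omega))$ together with $(-\Delta)^s=(-\Delta)^{s/2}(-\Delta)^{s/2}$ and the norm equivalences \eqref{eq:duality-pair}, \eqref{eq:norm-equiv} gives $(-\Delta)^{s/2}w\in L^2(0,T;H_0^s(\Omega))$, while $w\in H^1(0,T;L^2(\Omega))$ gives $\partial_t\big((-\Delta)^{s/2}w\big)=(-\Delta)^{s/2}\partial_t w\in L^2(0,T;H^{-s}(\Omega))$; a standard integration-by-parts lemma for the Gelfand triple $H_0^s(\Omega)\hookrightarrow L^2(\Omega)\hookrightarrow H^{-s}(\Omega)$ (see, e.g., \cite{LM72}) then yields that $t\mapsto\|(-\Delta)^{s/2}w(t)\|_{L^2(\Omega)}^2$ is absolutely continuous, that $w\in\mathcal C([0,T];H_0^s(\Omega))$, and that $\int_\Omega(-\Delta)^s w(t)\,\partial_t w(t)\,dx=\tfrac12\tfrac{d}{dt}\|(-\Delta)^{s/2}w(t)\|_{L^2(\Omega)}^2$ for a.e. $t$. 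Combining these with Young's inequality on $\int_\Omega(f_1-f_2)\partial_t w\,dx$ gives
\[
\tfrac12\|\partial_t w(t)\|_{L^2(\Omega)}^2+\tfrac12\tfrac{d}{dt}\|(-\Delta)^{s/2}w(t)\|_{L^2(\Omega)}^2\le\tfrac12\|f_1(t)-f_2(t)\|_{L^2(\Omega)}^2\quad\text{for a.e. }t.
\]

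Integrating over $(0,\tau)$ for $\tau\in(0,T]$, using $w(0)=u_1(\bullet,0)-u_2(\bullet,0)$ and the norm equivalence \eqref{eq:duality-pair}, and taking the supremum in $\tau$ (legitimate since $w\in\mathcal C([0,T];H_0^s(\Omega))$), I obtain
\[
\int_0^T\|\partial_t w\|_{L^2(\Omega)}^2\,dt+\sup_{t\in[0,T]}\|w(t)\|_{H_0^s(\Omega)}^2\le C\Big(\|u_1(\bullet,0)-u_2(\bullet,0)\|_{H_0^s(\Omega)}^2+\|f_1-f_2\|_{L^2(0,T;L^2(\Omega))}^2\Big).
\]
Since $\|w\|_{L^\infty(0,T;L^2(\Omega))}\le C\|w\|_{L^\infty(0,T;H_0^s(\Omega))}$, this is precisely \eqref{eq:conti-sol}; taking $f_1=f_2$ and $u_1(\bullet,0)=u_2(\bullet,0)$ forces $w\equiv0$, which is uniqueness.

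I expect the main technical point to be the time integration-by-parts for the elliptic term: one must check that the regularity class in Definition~\ref{def:sol}(a) lifts to $(-\Delta)^{s/2}w\in L^2(0,T;H_0^s(\Omega))\cap H^1(0,T;H^{-s}(\Omega))$ (using that $(-\Delta)^{s/2}$ is an isomorphism $H_0^s(\Omega)\to L^2(\Omega)$ and $X_0^{2s}(\Omega)\to H_0^s(\Omega)$ in the spectral sense), so that the Gelfand-triple chain rule applies and the pairing identity above is rigorous. The sign bookkeeping for the $[\,\cdot\,]_\pm$ terms is elementary, and all remaining steps—Young's inequality, integration in $t$, and the norm equivalences—are routine.
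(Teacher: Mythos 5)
Your argument is correct and is in substance the paper's own proof: an energy estimate obtained by pairing the equation for $w=u_1-u_2$ with $\partial_t w$, the monotonicity of $r\mapsto[r]_+$, Young's inequality, integration in time, and the norm equivalence \eqref{eq:norm-equiv}. Two points differ only in packaging. First, your sign bookkeeping via $[r]_+=r+[r]_-$ and the pointwise orthogonality $[g_i]_+[g_i]_-=0$ is algebraically equivalent to the paper's elementary inequality \eqref{eq:positive-part-ineq}, $|a_+-b_+|^2\le(a_+-b_+)(a-b)$, applied with $a=-(-\Delta)^su_1+f_1$, $b=-(-\Delta)^su_2+f_2$; the paper simply keeps the positive parts inside the product rather than peeling them off. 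Second, for the identity $\int_\Omega(-\Delta)^sw\,\partial_tw\,dx=\tfrac12\tfrac{d}{dt}\|(-\Delta)^{s/2}w\|_{L^2(\Omega)}^2$ you invoke the classical Gelfand-triple (Lions--Magenes) integration-by-parts lemma applied to $v=(-\Delta)^{s/2}w\in L^2(0,T;H_0^s(\Omega))\cap H^1(0,T;H^{-s}(\Omega))$, whereas the paper uses its own chain rule (Lemma~\ref{lem:chain}, proved in Appendix~\ref{sec:chain-rule} by mollification) applied directly to $u\in H^1(0,T;L^2(\Omega))\cap L^2(0,T;X_0^{2s}(\Omega))$; both routes are legitimate, yours leaning on a standard cited result and gaining $w\in\mathcal C([0,T];H_0^s(\Omega))$ the same way, the paper's being self-contained. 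If you keep your route, spell out the small duality step $\langle(-\Delta)^{s/2}\partial_tw,(-\Delta)^{s/2}w\rangle_{H^{-s},H_0^s}=((-\Delta)^sw,\partial_tw)_{L^2(\Omega)}$ (self-adjointness in the spectral calculus, using $w(t)\in X_0^{2s}(\Omega)$ a.e.) and the commutation $\partial_t((-\Delta)^{s/2}w)=(-\Delta)^{s/2}\partial_tw$; these are routine but worth a line.
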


By discretizing the time variable using the implicit Euler scheme, together
with the compact embedding 
\begin{equation}
H_{0}^{s}(\Omega)\xhookrightarrow{\text{compact}}L^{2}(\Omega)\quad\text{for all }s\in(0,1),\label{eq:cpt-emb}
\end{equation}
see, e.g., \cite{DPV12} and Ascoli's compactness lemma, we can
obtain the following existence result. 
\begin{thm}
[Existence results] \label{thm:Existence}Assume that the initial
datum $u_{0}\in H_{0}^{s}(\Omega)$ satisfies 
\begin{equation}
(-\Delta)^{s}u_{0}\in\mathcal{M}(\overline{\Omega})\quad\text{and}\quad[(-\Delta)^{s}u_{0}]_{+}\in L^{2}(\Omega),\label{eq:exist1}
\end{equation}
while the external force $f\in L^{2}(0,T;L^{2}(\Omega))$ satisfies
the obstacle condition 
\begin{equation}
f(x,t)\le f^{*}(x)\quad\text{a.e. in }\Omega\times(0,T)\label{eq:exist2}
\end{equation}
for some $f^{*}\in L^{2}(\Omega)$. Then there exists a strong solution
to \eqref{eq:diff1}, \eqref{eq:diff-Dirichlet}, and \eqref{eq:diff-initial}. 
\end{thm}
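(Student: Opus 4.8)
The plan is to use the implicit Euler (minimizing-movements) time discretization, exactly as indicated in the text, and to pass to the limit using the compact embedding \eqref{eq:cpt-emb}. Fix a partition $0=t_0<t_1<\cdots<t_N=T$ with step $\tau=T/N$ and set $f^k(x):=\frac{1}{\tau}\int_{t_{k-1}}^{t_k}f(x,t)\,dt$ (so that $\sum_k \tau\|f^k\|_{L^2(\Omega)}^2\le\|f\|_{L^2(0,T;L^2(\Omega))}^2$ and $f^k\le f^*$ a.e.\ for all $k$). We construct iterates $u^0:=u_0$ and, given $u^{k-1}$, we would like $u^k$ to solve $\frac{u^k-u^{k-1}}{\tau}=[-(-\Delta)^s u^k+f^k]_+$. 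The key reformulation, following Akagi--Kimura, is that this is equivalent to the obstacle-type variational inequality with obstacle $\psi=u^{k-1}$ and datum $f^k$: indeed, writing $A=(-\Delta)^s+\lambda$ with $\lambda=1/\tau$, the elliptic problem $\frac{u^k-u^{k-1}}{\tau}+(-\Delta)^s u^k=[-(-\Delta)^s u^k+f^k]_+$ can be rearranged, using the pointwise identity $\alpha=[\alpha]_+-[-\alpha]_+$, into $A u^k - (f^k+\tfrac{1}{\tau}u^{k-1}) = [(-\Delta)^s u^k - f^k]_+\ge 0$ together with the complementarity that this right-hand side vanishes where $u^k>u^{k-1}$; that is precisely \eqref{eq:LS-ineq1} for $K_0=\{v\ge u^{k-1}\}$ and external force $f^k+\tfrac1\tau u^{k-1}\in L^2(\Omega)$. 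So Theorem~\ref{thm:LS-ineq} gives a unique $u^k\in H_0^s(\Omega)$; the monotone structure forces $u^k\ge u^{k-1}$ a.e.

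Next I would verify that the Lewy--Stampacchia inequality \eqref{eq:LS-ineq2} propagates the structural assumption \eqref{eq:exist1} along the iteration, which is what makes the scheme close. One checks inductively that $(-\Delta)^s u^{k-1}\in\mathcal M(\overline\Omega)$ with $[(-\Delta)^s u^{k-1}]_+\in L^2(\Omega)$: given this, $\hat f:=A u^{k-1}=(-\Delta)^s u^{k-1}+\tfrac1\tau u^{k-1}$ satisfies \eqref{eq:LS-assump2}, so Theorem~\ref{thm:LS-ineq} yields $u^k\in X_0^{2s}(\Omega)$ and $f^k+\tfrac1\tau u^{k-1}\le A u^k\le\max\{f^k+\tfrac1\tau u^{k-1},\,\hat f\}$ a.e. The upper bound gives $(-\Delta)^s u^k\le\max\{f^k,(-\Delta)^s u^{k-1}\}\le\max\{f^*,[(-\Delta)^s u^{k-1}]_+\}$, which by induction is bounded above by $\max\{f^*,[(-\Delta)^s u_0]_+\}\in L^2(\Omega)$, a bound \emph{uniform in $k$ and $\tau$}; the lower bound gives $(-\Delta)^s u^k\ge f^k-\tfrac1\tau(u^k-u^{k-1})$. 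Thus $[(-\Delta)^s u^k]_+$ is controlled in $L^2$ uniformly, and in particular $(-\Delta)^s u^k\in L^2(\Omega)$ so the induction hypothesis is restored for the next step.

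The a priori estimates come from testing the discrete equation $\frac{u^k-u^{k-1}}{\tau}=[-(-\Delta)^s u^k+f^k]_+=:g^k$ appropriately. Testing against $g^k$ itself and using $g^k\cdot((-\Delta)^s u^k)\ge g^k\cdot((-\Delta)^s u^k) - g^k f^k = -|g^k|^2 + (\text{something}\ge 0)$-type manipulations — more precisely, pairing $(-\Delta)^s u^k$ with $u^k-u^{k-1}=\tau g^k$ and using convexity $\langle(-\Delta)^s u^k,u^k-u^{k-1}\rangle\ge\tfrac12(\|(-\Delta)^{s/2}u^k\|_{L^2}^2-\|(-\Delta)^{s/2}u^{k-1}\|_{L^2}^2)$ via \eqref{eq:duality-pair} — one obtains after summing in $k$ the bounds
\begin{equation*}
\sum_{k=1}^{N}\tau\Big\|\frac{u^k-u^{k-1}}{\tau}\Big\|_{L^2(\Omega)}^2+\max_{k}\|u^k\|_{H_0^s(\Omega)}^2\le C\Big(\|u_0\|_{H_0^s(\Omega)}^2+\|f\|_{L^2(0,T;L^2(\Omega))}^2\Big),
\end{equation*}
and, combined with the previous paragraph, $\max_k\|(-\Delta)^s u^k\|_{L^2(\Omega)}\le C\big(\|f^*\|_{L^2(\Omega)}+\|[(-\Delta)^s u_0]_+\|_{L^2(\Omega)}+\cdots\big)$. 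Introduce the piecewise-affine interpolant $u_\tau(t)$ and the piecewise-constant interpolant $\bar u_\tau(t)$; the estimates say $\partial_t u_\tau$ is bounded in $L^2(0,T;L^2(\Omega))$, $u_\tau,\bar u_\tau$ are bounded in $L^\infty(0,T;H_0^s(\Omega))$, and $(-\Delta)^s\bar u_\tau$ is bounded in $L^\infty(0,T;L^2(\Omega))$, i.e.\ $\bar u_\tau$ is bounded in $L^2(0,T;X_0^{2s}(\Omega))$. By \eqref{eq:cpt-emb} and the Aubin--Lions--Ascoli argument flagged in the text, along a subsequence $u_\tau\to u$ strongly in $\mathcal C([0,T];L^2(\Omega))$ and weakly in $H^1(0,T;L^2(\Omega))$, $\bar u_\tau\rightharpoonup u$ weakly in $L^2(0,T;X_0^{2s}(\Omega))$ (identifying limits via $\|u_\tau-\bar u_\tau\|_{L^\infty(0,T;L^2)}\le\tau\|\partial_t u_\tau\|_{L^\infty(0,T;L^2)}\to0$, which needs the uniform bound on $\|(u^k-u^{k-1})/\tau\|$ — available from the stronger estimate, or one uses the $L^2(0,T;L^2)$ bound on the difference quotients), and $(-\Delta)^s\bar u_\tau\rightharpoonup(-\Delta)^s u$ in $L^2(0,T;L^2(\Omega))$. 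Finally I would pass to the limit in $\partial_t u_\tau=[-(-\Delta)^s\bar u_\tau+\bar f_\tau]_+$: the right-hand side converges weakly in $L^2(0,T;L^2(\Omega))$, and by the strong-weak closedness of the maximal monotone graph $v\mapsto[v]_+$ (equivalently, Minty's trick using weak convergence of $(-\Delta)^s\bar u_\tau$ plus strong convergence of $u_\tau$ which controls $\langle(-\Delta)^s\bar u_\tau,\bar u_\tau\rangle$) the weak limit of $[-(-\Delta)^s\bar u_\tau+\bar f_\tau]_+$ is $[-(-\Delta)^s u+f]_+$. This yields (b); (a) follows from the uniform bounds and lower semicontinuity; (c) is immediate since $u_\tau(0)=u_0$ and $u_\tau\to u$ in $\mathcal C([0,T];L^2(\Omega))$.

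I expect the main obstacle to be the identification of the limit in the nonlinearity $[\,\cdot\,]_+$ together with the bookkeeping that keeps the Lewy--Stampacchia bound \eqref{eq:LS-ineq2} uniform in $\tau$ along the whole chain of iterates — in particular verifying that the obstacle $\max\{f^k+\tfrac1\tau u^{k-1},\hat f\}$ produced at step $k$ does not degrade the constant as $\tau\to0$, which is exactly why the structural hypothesis \eqref{eq:exist2} ($f\le f^*$) is imposed. The passage to the limit in the maximal monotone graph is standard once one has strong $L^2$-in-space-time compactness of $u_\tau$ (hence of $\bar u_\tau$) and weak convergence of $(-\Delta)^s\bar u_\tau$; the compact embedding \eqref{eq:cpt-emb} is precisely what supplies the former. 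Everything else — the elliptic-to-variational-inequality reformulation at each step, the telescoping energy estimate, the construction of interpolants — is routine and parallels \cite{AK19}.
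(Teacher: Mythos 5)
Your proposal is correct and follows essentially the same route as the paper: the implicit Euler scheme reformulated as the obstacle problem of Theorem~\ref{thm:LS-ineq} with $A=(-\Delta)^{s}+1/\tau_{k}$, $\psi=u_{k-1}$ (as in Lemma~\ref{lem:discretization}), the Lewy--Stampacchia bound iterated with \eqref{eq:exist2} to get a $\tau$-uniform $L^{2}$ control of $(-\Delta)^{s}u_{k}$, the telescoping energy estimate, interpolants, compactness via \eqref{eq:cpt-emb} and Ascoli, and Minty's argument for the limit in $[\bullet]_{+}$. The only differences are cosmetic (uniform partition, the bound $\max\{f^{*},[(-\Delta)^{s}u_{0}]_{+}\}$ in place of $[(-\Delta)^{s}u_{0}]_{+}+[f^{*}]_{+}$), and your treatment of the monotone-graph limit is sketched at the same level of detail as the paper's.
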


The connection between Theorem~\ref{thm:LS-ineq} and Theorem~\ref{thm:Existence} can be seen in the proof of Lemma~\ref{lem:discretization}. 

Then, employing the same arguments as in \cite{AK19}, we can obtain
the following comparison principle and identify the limit of each
solution $u=u(x,t)$ as $t\rightarrow\infty$. 
\begin{thm}
[Comparison principle] \label{thm:Comparison}Suppose that the initial
data $u_{0,1},u_{0,2}\in H_{0}^{s}(\Omega)$ of $u_{1},u_{2}$, respectively,
both satisfy \eqref{eq:exist1}, and the external forces $f_{1},f_{2}\in L^{2}(0,T;L^{2}(\Omega))$
of $u_{1},u_{2}$, respectively, both satisfy the obstacle condition
\eqref{eq:exist2}. If 
\[
u_{0,1}\le u_{0,2}\quad\text{a.e. in }\Omega
\]
and 
\[
f_{1}\le f_{2}\quad\text{a.e. in }\Omega\times(0,T),
\]
then $u_{1}\le u_{2}$ a.e. in $\Omega\times(0,T)$. 
\end{thm}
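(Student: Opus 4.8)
The plan is to reduce the comparison of the two evolution problems to a comparison of their time-discrete approximations, and then invoke the comparison principle for the stationary variational inequality, Theorem~\ref{thm:Comparison-var}. Fix a partition of $[0,T]$ into $N$ steps of size $\tau = T/N$. For $i=1,2$, following the construction in the proof of Theorem~\ref{thm:Existence} (via Lemma~\ref{lem:discretization}), define the discrete solutions $u_{i}^{k}$, $k=0,1,\dots,N$, by the implicit Euler scheme: set $u_{i}^{0}=u_{0,i}$, and having constructed $u_{i}^{k-1}$, let $u_{i}^{k}\in H_{0}^{s}(\Omega)$ be the solution of the obstacle-type variational inequality whose obstacle is $\psi = u_{i}^{k-1}$ and whose right-hand side incorporates the averaged force $f_{i}^{k}(x) := \frac{1}{\tau}\int_{(k-1)\tau}^{k\tau} f_{i}(x,t)\,dt$ together with the $\frac{1}{\tau}u_{i}^{k-1}$ term coming from the discretized time derivative; concretely $u_{i}^{k}$ solves $\langle A_{\tau} u_{i}^{k}, v - u_{i}^{k}\rangle \ge \langle g_{i}^{k}, v - u_{i}^{k}\rangle$ for all $v$ in the constraint set $\{v \in H_{0}^{s}(\Omega): v \ge u_{i}^{k-1}\}$, where $A_{\tau} = (-\Delta)^{s} + \tfrac{1}{\tau}$ (i.e. the operator $A$ with $\lambda = \tfrac{1}{\tau}$) and $g_{i}^{k} = f_{i}^{k} + \tfrac{1}{\tau} u_{i}^{k-1}$.

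The key step is an induction on $k$ showing that $u_{1}^{k}\le u_{2}^{k}$ a.e. in $\Omega$ for every $k$. The base case $k=0$ is the hypothesis $u_{0,1}\le u_{0,2}$. For the inductive step, assume $u_{1}^{k-1}\le u_{2}^{k-1}$. Then the obstacles satisfy $\psi_{1}=u_{1}^{k-1}\le u_{2}^{k-1}=\psi_{2}$, and since $f_{1}\le f_{2}$ a.e. in $\Omega\times(0,T)$ gives $f_{1}^{k}\le f_{2}^{k}$ a.e., and the inductive hypothesis gives $\tfrac{1}{\tau}u_{1}^{k-1}\le\tfrac{1}{\tau}u_{2}^{k-1}$, we obtain $g_{1}^{k}\le g_{2}^{k}$ in $H^{-s}(\Omega)$ (indeed in $L^{2}(\Omega)$). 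Applying Theorem~\ref{thm:Comparison-var} with the operator $A_{\tau}$ in place of $A$ — note that $A_{\tau}$ is of the same form with $\lambda = 1/\tau \ge 0$, so that theorem applies verbatim — yields $u_{1}^{k}\le u_{2}^{k}$, closing the induction. One should record that the hypotheses \eqref{eq:exist1}, \eqref{eq:exist2} are exactly what is needed for each $u_{i}^{k}$ to be the genuine strong solution's building block and to guarantee the Lewy-Stampacchia bounds along the scheme (so that the limit is the strong solution of Definition~\ref{def:sol}), but the comparison itself only uses the ordering.

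Finally, I would pass to the limit $\tau\to 0$. Let $u_{i}^{\tau}$ denote the piecewise-constant (or piecewise-linear) interpolant in time of $\{u_{i}^{k}\}_{k}$. From the proof of Theorem~\ref{thm:Existence}, along a subsequence $u_{i}^{\tau}\to u_{i}$ in a suitable sense — in particular strongly in $L^{2}(0,T;L^{2}(\Omega))$, hence a.e. in $\Omega\times(0,T)$ after passing to a further subsequence — where $u_{i}$ is \emph{the} strong solution with data $(f_{i},u_{0,i})$ (unique by Theorem~\ref{thm:Uniqueness}, so the whole sequence converges). Since $u_{1}^{\tau}\le u_{2}^{\tau}$ a.e. in $\Omega\times(0,T)$ for every $\tau$ by the induction above, the a.e. limit preserves the inequality: $u_{1}\le u_{2}$ a.e. in $\Omega\times(0,T)$. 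The main obstacle is bookkeeping rather than conceptual: one must make sure the discrete scheme used here is \emph{literally} the one from Lemma~\ref{lem:discretization}/Theorem~\ref{thm:Existence} (same averaging of $f_{i}$, same constraint sets), so that its limit is the strong solution and not merely some sub-solution, and one must be careful that the obstacle in step $k$ is $u_{i}^{k-1}$ itself — this is precisely the unidirectional ($\partial_t u \ge 0$) structure — so that the monotone ordering of obstacles is inherited from the previous step; this is the hinge of the whole argument. Everything else (existence of each $u_{i}^{k}$, the compactness yielding the limit) is already provided by the cited results.
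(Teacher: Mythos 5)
Your proposal is correct and follows exactly the route the paper intends: the paper omits this proof, deferring to the arguments of \cite{AK19}, which are precisely your scheme of comparing the implicit Euler iterates of Lemma~\ref{lem:discretization} by induction via Theorem~\ref{thm:Comparison-var} (with $A_{\tau}=(-\Delta)^{s}+1/\tau$, obstacle $u_{i}^{k-1}$, data $f_{i}^{k}+u_{i}^{k-1}/\tau$) and then passing to the limit using the convergences from the proof of Theorem~\ref{thm:Existence} together with uniqueness from Theorem~\ref{thm:Uniqueness}. Nothing essential is missing.
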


\begin{thm}
[Convergence of solutions as $t\rightarrow \infty$] \label{thm:Asymp}Let
$u_{0}\in X_{0}^{2s}(\Omega)$. Suppose that there exists a function
$f_{\infty}\in L^{2}(\Omega)$ such that $f-f_{\infty}\in L^{2}(0,\infty;L^{2}(\Omega))$.
Moreover, assume that $f\in L^{\infty}(0,\infty;L^{2}(\Omega))$ with
\eqref{eq:exist2}. Then, the unique solution $u=u(x,t)$ of \eqref{eq:diff1},
\eqref{eq:diff-Dirichlet}, and \eqref{eq:diff-initial} on $[0,\infty)$,
that is, on $[0,T]$ for each $T>0$, converges to a function $u_{\infty}=u_{\infty}(x)\in X_{0}^{2s}(\Omega)$, strongly in $H_{0}^{s}(\Omega)$ as $t\rightarrow\infty$. Moreover,
the limit $u_{\infty}$ satisfies 
\[
u_{\infty}\ge u_{0}\quad\text{and}\quad(-\Delta)^{s}u_{\infty}\ge f_{\infty}\quad\text{a.e. in }\Omega.
\]
In addition, if $f(x,t)\le f_{\infty}(x)$ for a.e. $(x,t)\in\Omega\times(0,\infty)$,
then the limit $u_{\infty}$ coincides with the unique solution $\overline{u}_{\infty}\in X_{0}^{2s}(\Omega)\cap K_{0}(u_{0})$
of the following variational inequality: 
\[
\int_{\Omega}((-\Delta)^{s/2}\overline{u}_{\infty})((-\Delta)^{s/2}(v-\overline{u}_{\infty}))\,dx\ge\int_{\Omega}f_{\infty}(v-\overline{u}_{\infty})\,dx\quad\text{for all }v\in K_{0}(u_{0}),
\]
where $K_{0}(u_{0}):=\{v\in H_{0}^{s}(\Omega):v\ge u_{0}\text{ a.e. in }\Omega\}$. 
\end{thm}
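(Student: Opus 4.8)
The plan is to combine the monotonicity built into the equation with two independent sources of uniform‑in‑$t$ a priori bounds — the Lewy--Stampacchia inequality at the level of the time discretization, and an energy identity — and then identify the limit by a compactness argument.

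\textbf{Monotonicity and uniform bounds.} Since $\partial_t u(t)=[-(-\Delta)^s u(t)+f(t)]_+\ge 0$ in $L^2(\Omega)$ for a.e.\ $t$, the map $t\mapsto u(x,t)$ is non‑decreasing for a.e.\ $x$, so $u(t)\ge u_0$ a.e.\ for every $t$. Next I would revisit the implicit Euler scheme used in the proof of Theorem~\ref{thm:Existence} (cf.\ Lemma~\ref{lem:discretization}): each step $u_h^k$ solves a variational inequality of the form in Theorem~\ref{thm:LS-ineq} with $A=\tfrac1h+(-\Delta)^s$ and obstacle $u_h^{k-1}$, so \eqref{eq:LS-ineq2} gives inductively $u_h^k\in X_0^{2s}(\Omega)$ and $(-\Delta)^s u_h^k\le\max\{f_h^k,(-\Delta)^s u_h^{k-1}\}$ a.e.\ (using $u_h^k\ge u_h^{k-1}$ to discard the term $\tfrac1h(u_h^{k-1}-u_h^k)$). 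Iterating, and using $u_0\in X_0^{2s}(\Omega)$ together with the obstacle condition \eqref{eq:exist2}, yields $(-\Delta)^s u_h^k\le M^+:=\max\{f^{*},(-\Delta)^s u_0\}\in L^2(\Omega)$, uniformly in $h,k$; passing to the limit $h\to 0$ (weakly in $L^2$, for a.e.\ $t$) gives the $t$‑uniform pointwise bound $(-\Delta)^s u(t)\le M^+$ a.e.\ in $\Omega$, in particular $\sup_t\|[(-\Delta)^s u(t)]_+\|_{L^2(\Omega)}<\infty$. In parallel, put $\mathcal{J}(t):=\tfrac12\|(-\Delta)^{s/2}u(t)\|_{L^2(\Omega)}^2-\int_\Omega f_\infty u(t)\,dx$; by the Lions lemma applied to $(-\Delta)^{s/2}u\in L^2(0,T;H_0^s(\Omega))\cap H^1(0,T;H^{-s}(\Omega))$, $\mathcal{J}$ is absolutely continuous with $\mathcal{J}'(t)=\int_\Omega((-\Delta)^s u(t)-f_\infty)\partial_t u(t)\,dx$, and using the pointwise identity $([g]_+)^2=g[g]_+$ with $g=f(t)-(-\Delta)^s u(t)$ one gets $\mathcal{J}'(t)=\int_\Omega(f(t)-f_\infty)\partial_t u(t)\,dx-\|\partial_t u(t)\|_{L^2(\Omega)}^2\le\tfrac12\|f(t)-f_\infty\|_{L^2(\Omega)}^2-\tfrac12\|\partial_t u(t)\|_{L^2(\Omega)}^2$. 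Since $\mathcal{J}(t)$ is bounded below (Poincaré and \eqref{eq:duality-pair}) and $f-f_\infty\in L^2(0,\infty;L^2(\Omega))$, integration gives $\sup_t\|u(t)\|_{H_0^s(\Omega)}<\infty$, $\int_0^\infty\|\partial_t u(t)\|_{L^2(\Omega)}^2\,dt<\infty$, and $\mathcal{J}(t)\to\mathcal{J}_\infty\in\mathbb{R}$ as $t\to\infty$.

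\textbf{Passage to the limit and strong convergence.} By monotonicity and the $L^2$‑bound, dominated convergence (dominant $u_\infty-u_0$) gives $u(t)\to u_\infty$ in $L^2(\Omega)$ for some $u_\infty\ge u_0$, and the $H_0^s$‑bound upgrades this to $u(t)\rightharpoonup u_\infty$ weakly in $H_0^s(\Omega)$. Choose $t_n\to\infty$ with $\|\partial_t u(t_n)\|_{L^2(\Omega)}\to 0$. Writing $(-\Delta)^s u(t_n)=f(t_n)-\partial_t u(t_n)+[(-\Delta)^s u(t_n)-f(t_n)]_+$ and invoking the Step‑1 bound (which controls the last term in $L^2$), $\{u(t_n)\}$ is bounded in $X_0^{2s}(\Omega)$. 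Since $u\mapsto(-\Delta)^{s/2}u$ identifies the embedding $X_0^{2s}(\Omega)\hookrightarrow H_0^s(\Omega)$ with $H_0^s(\Omega)\hookrightarrow L^2(\Omega)$, which is compact by \eqref{eq:cpt-emb}, a subsequence converges strongly in $H_0^s(\Omega)$ — necessarily to $u_\infty$ — while $(-\Delta)^s u(t_n)\rightharpoonup(-\Delta)^s u_\infty$ in $L^2$, so $u_\infty\in X_0^{2s}(\Omega)$. Then $\|(-\Delta)^{s/2}u(t_n)\|_{L^2}\to\|(-\Delta)^{s/2}u_\infty\|_{L^2}$; but $\|(-\Delta)^{s/2}u(t)\|_{L^2}^2=2\mathcal{J}(t)+2\int_\Omega f_\infty u(t)\,dx\to 2\mathcal{J}_\infty+2\int_\Omega f_\infty u_\infty\,dx$ as a full limit, so this full limit equals $\|(-\Delta)^{s/2}u_\infty\|_{L^2}^2$. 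Weak convergence together with convergence of the Hilbert norm $\|(-\Delta)^{s/2}\cdot\|_{L^2}$ (equivalent to $\|\cdot\|_{H_0^s}$ by \eqref{eq:duality-pair}) then yields $u(t)\to u_\infty$ strongly in $H_0^s(\Omega)$ as $t\to\infty$.

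\textbf{The limit profile and the variational inequality.} Along $t_n$, $(-\Delta)^s u(t_n)\ge f(t_n)-\partial_t u(t_n)$; extracting a further subsequence along which $f(t_n)\to f_\infty$ in $L^2$ (possible since $f-f_\infty\in L^2(0,\infty;L^2)$) and using that weak‑$L^2$ limits preserve pointwise inequalities gives $(-\Delta)^s u_\infty\ge f_\infty$ a.e.\ in $\Omega$; with $u_\infty\ge u_0$ this is the asserted structure. Now assume $f(x,t)\le f_\infty(x)$ a.e. The solution $\bar u_\infty\in X_0^{2s}(\Omega)$ of the stated variational inequality satisfies $\bar u_\infty\ge u_0$ and $(-\Delta)^s\bar u_\infty\ge f_\infty$, hence is the unique (Theorem~\ref{thm:Uniqueness}) time‑independent strong solution of \eqref{eq:diff1}--\eqref{eq:diff-initial} with external force $(-\Delta)^s\bar u_\infty$ and initial datum $\bar u_\infty$; since $f\le f_\infty\le(-\Delta)^s\bar u_\infty$ and $u_0\le\bar u_\infty$, the comparison principle (Theorem~\ref{thm:Comparison}) gives $u(t)\le\bar u_\infty$ for all $t$, whence $u_\infty\le\bar u_\infty$. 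For the reverse inequality, test the variational inequality defining $\bar u_\infty$ with $v=\min\{u_\infty,\bar u_\infty\}\in K_0(u_0)$ and combine it with $(-\Delta)^s u_\infty\ge f_\infty$; the fractional Kato‑type inequality $\langle(-\Delta)^s g,[g]_+\rangle\ge\|(-\Delta)^{s/2}[g]_+\|_{L^2(\Omega)}^2\ge 0$ for $g\in H_0^s(\Omega)$ — a consequence of Lemma~\ref{lem:MN17} — applied with $g=\bar u_\infty-u_\infty$ forces $[\bar u_\infty-u_\infty]_+=0$, i.e.\ $\bar u_\infty\le u_\infty$. Hence $u_\infty=\bar u_\infty$.

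\textbf{Main obstacle.} The delicate point is the $t$‑uniform $L^2$‑control of $[(-\Delta)^s u(t)]_+$: this does not follow from the strong‑solution regularity alone and must be extracted from the discretization, applying the Lewy--Stampacchia inequality \eqref{eq:LS-ineq2} at every Euler step and checking that the bound survives the limit $h\to 0$. A secondary difficulty is promoting the weak $H_0^s$‑convergence of the whole family $\{u(t)\}$ to strong convergence, which I would handle by pairing the convergence of the energy $\mathcal{J}(t)$ with the strong $H_0^s$‑convergence along the special sequence $t_n$ above.
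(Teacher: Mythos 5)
Your proposal is correct and follows essentially the route the paper intends (the paper omits this proof, deferring to the ideas of \cite{AK19}): monotonicity of $t\mapsto u(t)$ plus the energy estimate for $\mathcal{J}$, the uniform bound $(-\Delta)^{s}u(t)\le\max\{f^{*},(-\Delta)^{s}u_{0}\}$ harvested from the Lewy--Stampacchia inequality at each implicit Euler step and passed through the weak limit, compactness via \eqref{eq:norm-equiv} and \eqref{eq:cpt-emb}, and identification of the limit by comparison with the stationary supersolution $\overline{u}_{\infty}$. Two minor remarks: choose the times $t_{n}$ from the outset so that $\|\partial_{t}u(t_{n})\|_{L^{2}(\Omega)}^{2}+\|f(t_{n})-f_{\infty}\|_{L^{2}(\Omega)}^{2}\to0$ (possible since the sum is integrable on $(0,\infty)$), because a ``further subsequence'' of an already fixed sequence $\{t_{n}\}$ need not satisfy $f(t_{n})\to f_{\infty}$; and the reverse inequality $\overline{u}_{\infty}\le u_{\infty}$ follows directly from Lemma~\ref{lem:KS80-lemma1} applied with $(f,\psi)=(f_{\infty},u_{0})$, since $u_{\infty}\in K_{0}(u_{0})\cap K_{1}$, so your Kato-type argument via Lemma~\ref{lem:MN17} is a correct but redundant re-proof of that lemma's special case.
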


\subsection{Some remarks on the fractional Laplacian}

For the $\mathbb{R}^{n}$ case, there are at least 10 equivalent definitions; see, e.g., an interesting survey paper \cite{Kwa17}. The easiest way to define $(-\Delta)^{s}$ is simply using the Fourier transform. However, this definition is non-local. Thanks to the Caffarelli-Silvestre extension \cite{CS07}, the operator can be localized in an extended half-space, so we can obtain some further results. Moreover, the operator $(-\Delta)^{s}$ can be defined as the generator (see, e.g., \cite{Sch14}), or by the Dynkin characteristic operator of the isotropic $2s$-stable L\'{e}vy process, and it is widely used in probability theory, for example, in the continuous time random walk (see, e.g., Chapter 4 of \cite{KRS08}). 

The operator $(-\Delta)^{s}$ in the one-dimensional case can be represented by the Riemann-Liouville derivatives, as well as the Caputo derivatives. Each of the definitions has different advantages and drawbacks. Therefore, the equivalence between them is extremely important. Here, we would like to emphasize that the equivalence is in the sense of the norm, but they are not pointwise equivalent. There are some counterexamples in Kwa\'{s}nicki's paper \cite{Kwa17}. 

The Fourier transform in the $\mathbb{R}^{n}$ case can be referred to as a continuous spectrum. Therefore, in the case of bounded domain $\Omega$, we can define $(-\Delta)^{s}$ using the discrete $L^{2}$-spectrum of $-\Delta$ using a similar idea to that used in the $\mathbb{R}^{n}$ case, as shown in \eqref{eq:spectral-definition}. $(-\Delta)^{s}$ is non-local, that is, the definition of $(-\Delta)^{s}$ depends on the entire $\Omega$ as well as the Dirichlet data on $\partial\Omega$. Thus, in general, $(-\Delta)^{s}$ defined on a bounded domain $\Omega$ is not equivalent to the restriction of $(-\Delta)^{s}$ in the $\mathbb{R}^{n}$ case. However, we can still use some techniques from the $\mathbb{R}^{n}$ case. The most noteworthy one is the Caffarelli-Silvestre-type extension \cite{CS16}. This enables us to obtain the appropriate regularity for $(-\Delta)^{s} u$, which is the key to our proof. Moreover, it is easy to see that the $\mathbb{R}^{n}$-fractional Laplacian is a pseudo-differential operator. Indeed, $(-\Delta)^{s}$ on a bounded domain is also a pseudo-differential operator (see, e.g., \cite{Gru15}). Although the definition of fractional Laplacian is quite abstract, it can be computed numerically, as in \cite{AG17,DWZ18}.

\subsection{Organization of the paper}

Theorem~\ref{thm:LS-ineq} is given in Section~\ref{sec:Proof-of-LW-ineq}, Theorem~\ref{thm:Comparison} is proved in Section~\ref{sec:proof-Comparison}, and Theorem~\ref{thm:Uniqueness} is proved in Section~\ref{sec:Proof-of-Uniqueness}. Because the proof of Theorem~\ref{thm:Existence} is very similar to that given in \cite{AK19}, we only sketch it. We omit the proof of Theorem~\ref{thm:Comparison} and Theorem~\ref{thm:Asymp} because they can be easily proved using the same ideas in \cite{AK19}. Finally, we list some auxiliary lemmas in Appendix~\ref{sec:Aux-Lemma}, and in Appendix~\ref{sec:chain-rule}, we provide a note on a chain rule.  

\section{\label{sec:Proof-of-LW-ineq}Proof of Theorem~\ref{thm:LS-ineq}}

Given any $\lambda \ge 0$, we define the following unilateral constraint 
\[
K_{1}:=\{v\in H_{0}^{s}(\Omega):Av\ge f\text{ in }H^{-s}(\Omega)\},
\]
that is, $\langle Av-f,\varphi\rangle\ge0$ for all $\varphi\in H_{0}^{s}(\Omega)$
with $\varphi\ge0$ a.e. in $\Omega$. Moreover, we define $\hat{J}:H_{0}^{s}(\Omega)\rightarrow\mathbb{R}$
by 
\[
\hat{J}(v):=\frac{1}{2}\langle Av,v\rangle-\langle\hat{f},v\rangle\quad\text{for }v\in H_{0}^{s}(\Omega).
\]

\begin{lem}
\label{lem:LS1}Let $\lambda \ge 0$, $f\in H^{-s}(\Omega)$ and $\psi\in H_{0}^{s}(\Omega)$.
Then, there exists a unique $u\in H_{0}^{s}(\Omega)$ that satisfies
the following five equivalent conditions: 
\begin{enumerate}
\item [(a)] $u\in K_{0}$, $J(u)\le J(v)$
for all $v\in K_{0}$; 
\item [(b)] $u\in K_{0}$, $\langle Au,v-u\rangle\ge\langle f,v-u\rangle$
for all $v\in K_{0}$; 
\item [(c)] $u\in K_{0}\cap K_{1}$, $\langle Au-f,u-\psi\rangle=0$; 
\item [(d)] $u\in K_{1}$, $\langle Au,v-u\rangle\ge\langle\hat{f},v-u\rangle$
for all $v\in K_{1}$; 
\item [(e)] $u\in K_{1}$, $\hat{J}(u)\le\hat{J}(v)$
for all $v\in K_{1}$. 
\end{enumerate}
\end{lem}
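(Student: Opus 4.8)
The plan is to establish existence and uniqueness first, then prove the chain of equivalences (a) $\Leftrightarrow$ (b) $\Leftrightarrow$ (c) $\Leftrightarrow$ (d) $\Leftrightarrow$ (e). For existence and uniqueness, I would work with statement (a): since $K_{0}$ is a nonempty (it contains $\psi$) closed convex subset of the Hilbert space $H_{0}^{s}(\Omega)$, and since $A \in {\rm Isom}(H_{0}^{s}(\Omega), H^{-s}(\Omega))$ induces a bounded, symmetric, coercive bilinear form (coercivity follows from the norm equivalence \eqref{eq:duality-pair} and $\lambda \ge 0$), the functional $J$ is strictly convex, coercive, and weakly lower semicontinuous on $K_{0}$. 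The standard direct method (Stampacchia's theorem) then gives a unique minimizer $u$, which is the unique solution of (b) as well, since for a symmetric coercive form the minimization problem (a) and the variational inequality (b) are classically equivalent — one direction by perturbing $u$ toward $v \in K_{0}$ along the segment $u + t(v-u)$, the other by expanding $J(v) - J(u)$ and using the variational inequality together with coercivity.

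Next I would handle (b) $\Rightarrow$ (c). Testing (b) with $v = \psi \in K_{0}$ gives $\langle Au - f, u - \psi \rangle \le 0$. For the reverse inequality I want to test with $v = 2u - \psi$; this lies in $K_{0}$ because $v - \psi = 2(u - \psi) \ge 0$ a.e. Plugging in yields $\langle Au - f, u - \psi \rangle \ge 0$, so $\langle Au - f, u - \psi \rangle = 0$. To see $u \in K_{1}$, for any $\varphi \in H_{0}^{s}(\Omega)$ with $\varphi \ge 0$ a.e., the function $v = u + \varphi$ belongs to $K_{0}$ (it still dominates $\psi$), so (b) gives $\langle Au - f, \varphi \rangle \ge 0$; hence $Au \ge f$ in $H^{-s}(\Omega)$. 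For the converse (c) $\Rightarrow$ (b): given $v \in K_{0}$, write $\langle Au - f, v - u \rangle = \langle Au - f, v - \psi \rangle - \langle Au - f, u - \psi \rangle = \langle Au - f, v - \psi \rangle \ge 0$, where the last step uses $u \in K_{1}$ and $v - \psi \ge 0$ a.e. This proves (b) $\Leftrightarrow$ (c).

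For the symmetry with the dual problem, I would prove (c) $\Leftrightarrow$ (d) and then (d) $\Leftrightarrow$ (e). The key algebraic observation is that $\hat{f} = A\psi$, so $\langle Av - \hat{f}, w \rangle = \langle A(v - \psi), w \rangle$ for all $v, w$. Given (c), for $v \in K_{1}$ compute $\langle Au - \hat{f}, v - u \rangle = \langle Au - \hat{f}, v - \psi \rangle - \langle Au - \hat{f}, u - \psi \rangle = \langle A(v - \psi), u - \psi \rangle - \langle Au - f, u - \psi \rangle + \langle f - \hat{f}, u - \psi\rangle$; here $\langle Au - f, u-\psi\rangle = 0$ by (c), $\langle f - \hat f, u - \psi \rangle = -\langle A\psi - f, u - \psi\rangle$ which I can relate to the constraint, and $\langle A(v-\psi), u - \psi\rangle \ge 0$ needs care — actually the cleaner route is to use the well-known duality: $u$ solves the obstacle problem (b) for data $(f, \psi)$ with solution $u$ iff $u$ solves the ``complementary'' obstacle problem (d) for the constraint $K_{1}$; this is exactly Gustafsson's reformulation. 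I would prove (c) $\Rightarrow$ (d) directly by testing and (d) $\Rightarrow$ (c) by testing (d) with $v = u + \varphi$, $\varphi \ge 0$ (legitimate since $Av = Au + A\varphi \ge f$ requires $A\varphi \ge 0$, which does \emph{not} hold in general for the fractional operator — so here $v = \psi$ is the safe test function, giving $\langle Au - \hat f, \psi - u\rangle \ge 0$, i.e. $\langle A(u-\psi), u - \psi\rangle \le 0$, forcing $u = \psi$ unless... ). This subtlety — that $K_{1}$ is not translation-friendly and that $A$ does not preserve positivity pointwise, only in the $H^{-s}$ duality sense — is the main obstacle, and it is precisely the nonlocal difficulty flagged in the introduction. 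I expect to resolve it by testing (d) only with admissible competitors of the form $v = \max\{u, w\}$ or $v = \psi$ (both of which are checked to lie in $K_{1}$ using $Au \ge f$ and monotonicity of $A$ established in the appendix lemmas), rather than with additive perturbations. Finally, (d) $\Leftrightarrow$ (e) is the same symmetric-coercive minimization-vs-variational-inequality equivalence already used for (a) $\Leftrightarrow$ (b), now applied to $\hat{J}$ on the closed convex set $K_{1}$ (closedness of $K_{1}$ follows since $v \mapsto \langle Av - f, \varphi\rangle$ is continuous for each fixed $\varphi \ge 0$).
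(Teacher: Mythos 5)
Your treatment of existence/uniqueness, (a)$\iff$(b), (d)$\iff$(e), and (b)$\iff$(c) is correct and matches the paper. The genuine gap is in the equivalence (c)$\iff$(d). For (c)$\implies$(d) you start an expansion and then abandon it; the clean computation you are missing is the symmetry identity
\[
\langle Au-\hat{f},v-u\rangle=\langle A(u-\psi),v-u\rangle=\langle A(v-u),u-\psi\rangle=\langle Av-f,u-\psi\rangle-\langle Au-f,u-\psi\rangle\ge0,
\]
where the first term is nonnegative because $v\in K_{1}$ and $u-\psi\ge0$ (from $u\in K_{0}$), and the second vanishes by (c). More seriously, your proposed route for (d)$\implies$(c) does not work. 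You correctly observe that additive perturbations $v=u+\varphi$ with $\varphi\ge0$ are inadmissible in $K_{1}$, but your substitutes fail: $v=\psi$ lies in $K_{1}$ only if $\hat{f}=A\psi\ge f$, which is not assumed (and your tentative conclusion ``$\langle A(u-\psi),u-\psi\rangle\le0$, forcing $u=\psi$'' would be absurd anyway); $v=\max\{u,w\}$ is not in $K_{1}$ in general precisely because $A$ is nonlocal and does not act pointwise, so no comparison of $A\max\{u,w\}$ with $f$ is available; and there is no ``monotonicity of $A$'' lemma in the appendix to invoke (Lemmas \ref{lem:A1}, \ref{lem:A3}, \ref{densitylemma} concern positive parts and density, not order-preservation of $A$).

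The missing idea is to perturb inside $K_{1}$ through the \emph{image} of $A$: since the constraint defining $K_{1}$ is on $Av$, take $v=u+A^{-1}\varphi$ with $\varphi\in L^{2}(\Omega)$, $\varphi\ge0$; this is admissible because $A v=Au+\varphi\ge Au\ge f$ (using $u\in K_{1}$, which is part of hypothesis (d)). Writing (d) as $\langle A(v-u),u-\psi\rangle\ge0$ (again via $\hat f=A\psi$ and symmetry), this test function yields $(\varphi,u-\psi)_{L^{2}(\Omega)}\ge0$ for all $\varphi\ge0$, hence $u\ge\psi$ a.e., i.e.\ $u\in K_{0}$. Then the substitutions $v=A^{-1}f\in K_{1}$ and $v=2u-A^{-1}f\in K_{1}$ (the latter admissible since $A(2u-A^{-1}f)=2Au-f\ge f$) give $\langle Au-f,u-\psi\rangle\le0$ and $\ge0$, hence $=0$, which is (c). Without this device your chain of equivalences is broken at (d)$\implies$(c), so the proposal as written is incomplete.
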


\begin{proof}[Proof of Lemma~{\rm \ref{lem:LS1}}]
It is easy to see that $J$ is continuous and strictly convex on $K_{0}$, which immediately implies the uniqueness of $u\in H_{0}^{s}(\Omega)$. According to Stampacchia's theorem (see, e.g., \cite[Theorem 5.6]{Bre11}), it is well known that (a)$\iff$(b) and (d)$\iff$(e), and therefore $u\in H_{0}^{s}(\Omega)$ exists (indeed, it can also be proved using a standard argument given in \cite{Eva98}). Therefore, we only need to show the equivalence of (a)--(e). 

First, we show that (b)$\implies$(c). Condition (b) can be equivalently rewritten as 
\begin{equation}
	\langle Au-f,v-u\rangle\ge0\quad\text{for all }v\in K_{0}.\label{eq:LS-equiv1}
\end{equation}
For any $\varphi\in H_{0}^{s}(\Omega)$ with $\varphi\ge0$ a.e. in
$\Omega$, substituting $v=u+\varphi\in K_{0}$ into \eqref{eq:LS-equiv1},
we have 
\[
\langle Au-f,\varphi\rangle\ge0,
\]
i.e., $Au\ge f$ in $H^{-s}(\Omega)$, which yields
that $u\in K_{1}$. Conversely, substituting
$v=\psi\in K_{0}$ and $v=2u-\psi\in K_{0}$ to \eqref{eq:LS-equiv1},
we reach 
\[
\langle Au-f,\psi-u\rangle\ge0\quad\text{and}\quad\langle Au-f,u-\psi\rangle\ge0.
\]
Then, we obtain $\langle Au-f,\psi-u\rangle=0$,
and hence condition (c) holds. 

Next, we prove the converse, (c)$\implies$(b). For any $v\in K_{0}$,
we see that 
\[
\langle Au,v-u\rangle-\langle f,v-u\rangle=\overbrace{\langle Au-f,v-\psi\rangle}^{\ge0\text{ by definition of }K_{1}}-\overbrace{\langle Au-f,u-\psi\rangle}^{=0\text{ by (c)}}\ge0,
\]
which shows that condition (b) holds. 

Finally, we prove the equivalence between (c) and (d) in a similar
fashion to that above. We first show that (c)$\implies$(d). Using the symmetry
of $\langle A\bullet,\bullet\rangle$, note that 
\begin{align*}
	\langle Au,v-u\rangle-\langle\hat{f},v-u\rangle= & \langle Av-Au,u\rangle-\langle Av-Au,\psi\rangle\\
	= & \overbrace{\langle Av-f,u-\psi\rangle}^{\ge0\text{ by definition of }K_{1}}-\overbrace{\langle Av-f,u-\psi\rangle}^{=0\text{ by (c)}}\ge0.
\end{align*}
This is the desired condition (d). 

Conversely, we shall prove that (d)$\implies$(c). We write (d) as 
\begin{equation}
	\langle Av-Au,u-\psi\rangle\ge0\quad\text{for all }v\in K_{1}.\label{eq:LS-equiv2}
\end{equation}
For any $\varphi\in L^{2}(\Omega)$ with $\varphi\ge0$ a.e. in $\Omega$,
substituting $v=u+A^{-1}\varphi\in K_{1}$
to \eqref{eq:LS-equiv2}, we have 
\[
(\varphi,u-\psi)_{L^{2}(\Omega)}\ge0.
\]
By the arbitrariness of $\varphi\ge0$, we reach $u\in K_{0}$.
Moreover, we substitute $v=A^{-1}f\in K_{1}$
and $v=2u-A^{-1}f\in K_{1}$, and similarly,
we obtain 
\[
\langle Au-f,u-\psi\rangle=0,
\]
which verifies condition (c).
\end{proof}

The following lemma can be found in \cite[Remark 3.3]{MN17}. 
\begin{lem}\label{lem:MN17}
If $u \in H_{0}^{s}(\Omega)$, then $u_{\pm} \in H_{0}^{s}(\Omega)$ and 
\[
\langle (-\Delta)^{s} u_{+} , u_{-} \rangle \le 0,
\]
where $\alpha_{-} = - \min\{\alpha,0\}$ for all $\alpha \in \mathbb{R}$. 
\end{lem}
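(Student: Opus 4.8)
The plan is to reduce everything to the Caffarelli--Silvestre energy functional $\mathscr{E}^{s}$ and to exploit the variational identity $\langle(-\Delta)^{s}v,v\rangle=\mathscr{E}^{s}(V(v))$ from Lemma~\ref{lem:minimizing-problem}. First I would check $u_{\pm}\in H_{0}^{s}(\Omega)$: since $t\mapsto t_{\pm}$ is $1$-Lipschitz, $|u_{\pm}(x)-u_{\pm}(z)|\le|u(x)-u(z)|$, so the Gagliardo seminorm of $u_{\pm}$ is controlled by that of $u$; combined with $u_{\pm}^{2}\le u^{2}$ (which also preserves the Lions--Magenes condition \eqref{eq:Lions-Magnes-condition} when $s=1/2$) and the identification of $H_{0}^{s}(\Omega)$ recalled in \eqref{eq:tracea}--\eqref{eq:traceb} (respectively a density argument via $\tilde{H}^{s}(\Omega)=H_{0}^{s}(\Omega)$ for $s>1/2$), this yields $u_{\pm}\in H_{0}^{s}(\Omega)$. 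In particular $(-\Delta)^{s}u_{+}\in H^{-s}(\Omega)$, the pairing $\langle(-\Delta)^{s}u_{+},u_{-}\rangle$ is well defined, and it is symmetric in $u_{+},u_{-}$ by \eqref{eq:duality-pair}.

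For the core of the argument, let $W:=V(u)\in\mathscr{W}^{s}(u)$ be the minimizer from Lemma~\ref{lem:minimizing-problem}, and let $W_{+},W_{-}\ge 0$ be its positive and negative parts in the $(x,y)$-variables. I claim $W_{+}\in\mathscr{W}^{s}(u_{+})$ and $W_{-}\in\mathscr{W}^{s}(u_{-})$. Indeed, truncation keeps $W_{\pm}\in H^{1}_{\rm loc}(\Omega\times(0,\infty))$; the chain rule (Appendix~\ref{sec:chain-rule}) gives $\nabla W_{+}=\mathbbm{1}_{\{W>0\}}\nabla W$ and $\nabla W_{-}=-\mathbbm{1}_{\{W<0\}}\nabla W$ a.e., hence $|\nabla W_{\pm}|\le|\nabla W|$ a.e. and $\mathscr{E}^{s}(W_{\pm})<\infty$; $W_{\pm}=0$ on $\partial\Omega\times(0,\infty)$ because $W$ does; and $W_{\pm}|_{y=0}=(W|_{y=0})_{\pm}=u_{\pm}$ on $\Omega$ because the trace in the sense of \cite[Proposition 2.5]{NOS15} commutes with the $1$-Lipschitz truncation. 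Moreover, since $\{W>0\}$ and $\{W<0\}$ are disjoint and $\nabla W=0$ a.e. on $\{W=0\}$, the chain-rule identities give $\nabla W_{+}\cdot\nabla W_{-}=0$ a.e., so that
\[
\mathscr{E}^{s}(W)=\mathscr{E}^{s}(W_{+})+\mathscr{E}^{s}(W_{-}).
\]

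To conclude, expand the bilinear form using its symmetry and then apply $\langle(-\Delta)^{s}v,v\rangle=\mathscr{E}^{s}(V(v))$ to $v=u,u_{+},u_{-}$:
\[
2\langle(-\Delta)^{s}u_{+},u_{-}\rangle=\langle(-\Delta)^{s}u_{+},u_{+}\rangle+\langle(-\Delta)^{s}u_{-},u_{-}\rangle-\langle(-\Delta)^{s}u,u\rangle=\mathscr{E}^{s}(V(u_{+}))+\mathscr{E}^{s}(V(u_{-}))-\mathscr{E}^{s}(W).
\]
Since $W_{\pm}$ are admissible competitors in the minimization problems defining $V(u_{\pm})$ (they need not equal $V(u_{\pm})$, which is why only an inequality survives), we have $\mathscr{E}^{s}(V(u_{\pm}))\le\mathscr{E}^{s}(W_{\pm})$; together with the orthogonality identity above this gives $2\langle(-\Delta)^{s}u_{+},u_{-}\rangle\le\mathscr{E}^{s}(W_{+})+\mathscr{E}^{s}(W_{-})-\mathscr{E}^{s}(W)=0$, which is the assertion. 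The main technical obstacle — and where I expect to spend the most care — is the verification that $W_{\pm}\in\mathscr{W}^{s}(u_{\pm})$: the Stampacchia-type chain rule and the product-vanishing identity $\nabla W_{+}\cdot\nabla W_{-}=0$ for the degenerate weight $y^{1-2s}$, and the compatibility of the $y=0$ trace with truncation. Once these are in place, the remainder is a short algebraic manipulation.
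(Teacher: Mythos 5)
Your proposal is correct and follows essentially the same route as the paper's proof: decompose the extension minimizer $V(u)$ into its positive and negative parts, use the additivity $\mathscr{E}^{s}(V(u))=\mathscr{E}^{s}(V(u)_{+})+\mathscr{E}^{s}(V(u)_{-})$ and the fact that $V(u)_{\pm}$ are admissible competitors for the minimization problems of $u_{\pm}$, then conclude with the same polarization identity $2\langle(-\Delta)^{s}u_{+},u_{-}\rangle=\langle(-\Delta)^{s}u_{+},u_{+}\rangle+\langle(-\Delta)^{s}u_{-},u_{-}\rangle-\langle(-\Delta)^{s}u,u\rangle\le 0$. The only difference is that you spell out the verifications (chain rule, trace compatibility, $u_{\pm}\in H_{0}^{s}(\Omega)$ via Lemma~\ref{lem:A3}-type estimates) that the paper leaves implicit.
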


\begin{proof}
Because $u \in H_{0}^{s}(\Omega)$, using Lemma~\ref{lem:A3}, we know that $u_{\pm} \in H_{0}^{s}(\Omega)$. From Lemma~\ref{lem:minimizing-problem}, 
\begin{align*}
\langle (-\Delta)^{s}u,u\rangle &= \mathscr{E}^{s}(V(u)) \\
&=  \mathscr{E}^{s}(V(u)_{+}) + \mathscr{E}^{s}(V(u)_{-}) \\
&\ge \mathscr{E}^{s}(V(u_{+})) + \mathscr{E}^{s}(V(u_{-})) \\
&= \langle (-\Delta)^{s}u_{+},u_{+}\rangle + \langle (-\Delta)^{s}u_{-},u_{-}\rangle.
\end{align*}
Hence, we have 
\[
2\langle (-\Delta)^{s}u_{+},u_{-}\rangle = \langle (-\Delta)^{s}u_{+},u_{+}\rangle + \langle (-\Delta)^{s}u_{-},u_{-}\rangle - \langle (-\Delta)^{s}u,u\rangle \le 0,
\]
which is our desired lemma. 
\end{proof}

Under conditions \eqref{eq:LS-assump1} and \eqref{eq:LS-assump2}, for each $\lambda \ge 0$, 
we introduce the obstacle set 
\[
K_{2}:=\{v\in H_{0}^{s}(\Omega):f\le Av\le\max\{f,\hat{f}\}\text{ in }H^{-s}(\Omega)\}\subset K_{1},
\]
which is a closed convex subset of $H_{0}^{s}(\Omega)$. 

\begin{lem}
\label{cor:inclusion}Suppose that \eqref{eq:LS-assump1} and \eqref{eq:LS-assump2} hold. Then $K_{2}\subset X_{0}^{2s}(\Omega)$. 
\end{lem}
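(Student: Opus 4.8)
The goal is to show that every $v \in K_2$ lies in $X_0^{2s}(\Omega)$, i.e. that $(-\Delta)^s v \in L^2(\Omega)$. The key point is that the definition of $K_2$ already controls $Av = (-\Delta)^s v + \lambda v$ in $H^{-s}(\Omega)$ by a \emph{two-sided} bound, squeezing it between $f$ and $\max\{f,\hat f\}$, both of which are $L^2(\Omega)$ functions by \eqref{eq:LS-assump1} and \eqref{maxff}. So the whole content of the lemma is: if $T \in H^{-s}(\Omega)$ satisfies $g_1 \le T \le g_2$ in $H^{-s}(\Omega)$ for some $g_1,g_2 \in L^2(\Omega)$, then $T$ is (represented by) an $L^2(\Omega)$ function, with $\|T\|_{L^2(\Omega)} \le \max\{\|g_1\|_{L^2(\Omega)},\|g_2\|_{L^2(\Omega)}\}$ up to absorbing constants. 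Applying this with $T = Av$, $g_1 = f$, $g_2 = \max\{f,\hat f\}$ gives $Av \in L^2(\Omega)$, and since $v \in H_0^s(\Omega) \subset L^2(\Omega)$, we get $(-\Delta)^s v = Av - \lambda v \in L^2(\Omega)$, hence $v \in X_0^{2s}(\Omega)$.

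First I would record the elementary functional-analytic fact underlying the squeeze. Let $T := Av - \tfrac12(g_1+g_2) \in H^{-s}(\Omega)$; then $-h \le T \le h$ in $H^{-s}(\Omega)$ where $h := \tfrac12(g_2-g_1) \in L^2(\Omega)$, $h \ge 0$. For any $\varphi \in H_0^s(\Omega)$ write $\varphi = \varphi_+ - \varphi_-$ with $\varphi_\pm \in H_0^s(\Omega) \cap L^2(\Omega)$, $\varphi_\pm \ge 0$ (using Lemma~\ref{lem:A3}, as in Lemma~\ref{lem:MN17}). From $|\langle T,\varphi_\pm\rangle| \le \langle h,\varphi_\pm\rangle = \int_\Omega h\,\varphi_\pm\,dx$ one obtains
\[
|\langle T,\varphi\rangle| \le \int_\Omega h(\varphi_+ + \varphi_-)\,dx = \int_\Omega h\,|\varphi|\,dx \le \|h\|_{L^2(\Omega)}\|\varphi\|_{L^2(\Omega)}.
\]
Thus the linear functional $\varphi \mapsto \langle T,\varphi\rangle$, a priori only bounded on $H_0^s(\Omega)$, is in fact bounded for the weaker $L^2(\Omega)$-norm on the dense subspace $H_0^s(\Omega) \subset L^2(\Omega)$; hence it extends to a bounded functional on $L^2(\Omega)$ and, by the Riesz representation theorem, is given by integration against some $g \in L^2(\Omega)$ with $\|g\|_{L^2(\Omega)} \le \|h\|_{L^2(\Omega)}$. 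Identifying $T$ with $g$ gives $Av = g + \tfrac12(g_1+g_2) \in L^2(\Omega)$.

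Then I would conclude: since $v \in H_0^s(\Omega) \subset L^2(\Omega)$, we have $(-\Delta)^s v = Av - \lambda v \in L^2(\Omega)$, so by definition $v \in X_0^{2s}(\Omega)$, proving $K_2 \subset X_0^{2s}(\Omega)$. The only genuine subtlety — and the step I expect to need the most care — is the decomposition $\varphi = \varphi_+ - \varphi_-$ together with the sign-testing: one must know that $\varphi_\pm \in H_0^s(\Omega)$ (so that they are legitimate test functions against which the $H^{-s}$-order relations $-h \le T \le h$ may be evaluated) and that the pairing $\langle h,\varphi_\pm\rangle$ coincides with the Lebesgue integral $\int_\Omega h\,\varphi_\pm\,dx$ since $h \in L^2(\Omega) \hookrightarrow H^{-s}(\Omega)$. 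Both facts are available: the former from Lemma~\ref{lem:A3} (invoked already in Lemma~\ref{lem:MN17}), and the latter from the standard identification of $L^2(\Omega)$ as a subspace of $H^{-s}(\Omega)$ via the $L^2$ inner product. A minor bookkeeping remark: the constants in \eqref{eq:duality-pair} do not enter here at all, because the estimate is carried out entirely in the $L^2$–$H^{-s}$ pairing and uses only the order structure, not the norm equivalence.
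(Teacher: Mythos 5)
Your proposal is correct and follows essentially the same route as the paper's proof: decompose a test function $\phi\in H_{0}^{s}(\Omega)$ into $\phi_{\pm}$ via Lemma~\ref{lem:A3}, use the two-sided bound defining $K_{2}$ to obtain $|\langle Av,\phi\rangle|\le C\|\phi\|_{L^{2}(\Omega)}$, and then identify $Av$ with an $L^{2}$ function by extension and Riesz representation, from which $(-\Delta)^{s}v=Av-\lambda v\in L^{2}(\Omega)$. Your symmetrization around $\tfrac12(g_{1}+g_{2})$ and the density-extension in place of Hahn--Banach are only cosmetic variations.
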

\begin{proof}
	Let $v\in K_{2}$; then, we have 
	\[
	\langle f,\varphi\rangle\le\langle Av,\varphi\rangle\le\langle g,\varphi\rangle\quad\text{for all }\varphi\in H_{0}^{s}(\Omega)\text{ with }\varphi\ge0,
	\]
	where $g=\max\{f,\hat{f}\}$. Given any $\phi\in H_{0}^{s}(\Omega)$,
	by Lemma~\ref{lem:A3}, we have $\phi_{\pm}\in H_{0}^{s}(\Omega)$.
	Therefore, we have 
	\begin{align}
		\langle Av,\phi\rangle & =\langle Av,\phi_{+}\rangle-\langle Av,\phi_{-}\rangle\nonumber \\
		& \le\langle g,\phi_{+}\rangle-\langle f,\phi_{-}\rangle\nonumber \\
		& \le\|g\|_{L^{2}(\Omega)}\|\phi_{+}\|_{L^{2}(\Omega)}+\|f\|_{L^{2}(\Omega)}\|\phi_{-}\|_{L^{2}(\Omega)}\nonumber \\
		& \le\bigg(\|f\|_{L^{2}(\Omega)}+\|g\|_{L^{2}(\Omega)}\bigg)\|\phi\|_{L^{2}(\Omega)}.\label{eq:patch-ver11-1}
	\end{align}
	Similarly, we also have 
	\begin{align}
		-\langle Av,\phi\rangle & =-\langle Av,\phi_{+}\rangle+\langle Av,\phi_{-}\rangle\nonumber \\
		& \le-\langle f,\phi_{+}\rangle+\langle g,\phi_{-}\rangle\nonumber \\
		& \le\|f\|_{L^{2}(\Omega)}\|\phi_{+}\|_{L^{2}(\Omega)}+\|g\|_{L^{2}(\Omega)}\|\phi_{-}\|_{L^{2}(\Omega)}\nonumber \\
		& \le\bigg(\|f\|_{L^{2}(\Omega)}+\|g\|_{L^{2}(\Omega)}\bigg)\|\phi\|_{L^{2}(\Omega)}.\label{eq:patch-ver11-2}
	\end{align}
	Combining \eqref{eq:patch-ver11-1} and \eqref{eq:patch-ver11-2},
	we have 
	\[
	|_{H^{-s}(\Omega)}\langle Av,\phi\rangle_{H_{0}^{s}(\Omega)}|\le\bigg(\|f\|_{L^{2}(\Omega)}+\|g\|_{L^{2}(\Omega)}\bigg)\|\phi\|_{L^{2}(\Omega)}\quad\text{for all }\phi\in H_{0}^{s}(\Omega).
	\]
	Therefore, by the Hahn-Banach theorem, there exists $F\in(L^{2}(\Omega))'$
	such that 
	\[
	_{(L^{2}(\Omega))'}\langle F,\phi\rangle_{L^{2}(\Omega)}={}_{H^{-s}(\Omega)}\langle Av,\phi\rangle_{H_{0}^{s}(\Omega)}\quad\text{for all }\phi\in H_{0}^{s}(\Omega).
	\]
	Following Riesz's representation theorem, there exists $G\in L^{2}(\Omega)$
	such that 
	\[
	(G,\phi)_{L^{2}(\Omega)}={}_{(L^{2}(\Omega))'}\langle F,\phi\rangle_{L^{2}(\Omega)}={}_{H^{-s}(\Omega)}\langle Av,\phi\rangle_{H_{0}^{s}(\Omega)}\quad\text{for all }\phi\in H_{0}^{s}(\Omega).
	\]
	Therefore, we can identify $Av$ and $G\in L^{2}(\Omega)$, that is,
	$Av\in L^{2}(\Omega)$, Then, we conclude that $(-\Delta)^{s}v\in L^{2}(\Omega)$.
	Because $v\in H_{0}^{s}(\Omega)$, therefore, $v\in X_{0}^{2s}(\Omega)$. 
\end{proof}
 
\begin{rem}
\label{rem:Laplace} Indeed, from Lemma~\ref{lem:solvability}, given any $\lambda \ge 0$, $A|_{X_{0}^{2s}(\Omega)}:X_{0}^{2s}(\Omega)\rightarrow L^{2}(\Omega)$
is an isomorphism. 
\end{rem}

\begin{lem}
	\label{lem:LS2}Let $\lambda \ge 0$. Suppose that \eqref{eq:LS-assump1} and \eqref{eq:LS-assump2} hold. Then, the following conditions are equivalent to the conditions
	in Lemma~{\rm \ref{lem:LS1}}: 
	\begin{enumerate}
		\item [(f)] $u\in K_{2}$, $\hat{J}(u)\le\hat{J}(v)$
		for all $v\in K_{2}$; 
		\item [(g)] $u\in K_{2}$, $\langle Au,v-u\rangle\ge\langle\hat{f},v-u\rangle$
		for all $v\in K_{2}$; 
		\item [(h)] $u\in K_{0}\cap K_{2}$, $(Au-f)(u-\psi)=0$
		a.e. in $\Omega$.
	\end{enumerate}
\end{lem}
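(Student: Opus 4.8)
The strategy is to show a chain of equivalences linking the new conditions (f), (g), (h) to each other and to the conditions already established in Lemma~\ref{lem:LS1}. The backbone will be: first establish the existence of a solution to (g) (equivalently (f)) by Stampacchia's theorem on the closed convex set $K_{2}\subset H_{0}^{s}(\Omega)$, noting that $\hat{J}$ is continuous and strictly convex, so (f)$\iff$(g) is immediate and the solution of (g) is unique. Next, since $K_{2}\subset K_{1}$, a solution $u$ of (d) from Lemma~\ref{lem:LS1} is a natural candidate: I would first verify that the solution $u$ of condition (d) actually belongs to $K_{2}$, i.e.\ that $Au\le\max\{f,\hat f\}$ in $H^{-s}(\Omega)$. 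This is the heart of the matter and is exactly the Lewy-Stampacchia inequality content. The remaining links are then comparatively routine: once the Lemma~\ref{lem:LS1}-solution $u$ is known to lie in $K_{2}$, the variational inequality (d) over $K_{1}$ restricts to give (g) over the smaller set $K_{2}$; conversely the unique solution of (g) must coincide with that $u$ because a solution of (g) that lies in $K_{2}\subset K_{1}$ can be tested against the same competitors and, by the strict convexity argument, equals the (unique) minimizer. For (h), I would argue that (h)$\iff$(c) in the presence of the extra regularity: under \eqref{eq:LS-assump1}--\eqref{eq:LS-assump2} and Lemma~\ref{cor:inclusion} we have $u\in X_{0}^{2s}(\Omega)$, so $Au-f\in L^{2}(\Omega)$ and the duality pairing $\langle Au-f,u-\psi\rangle$ becomes the genuine $L^{2}$ integral $\int_{\Omega}(Au-f)(u-\psi)\,dx$; since both factors have a sign ($Au-f\ge0$ a.e.\ because $u\in K_{1}\cap X_{0}^{2s}$, and $u-\psi\ge0$ a.e.\ because $u\in K_{0}$), vanishing of the integral is equivalent to vanishing of the integrand a.e., which is precisely (h).

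The single genuinely hard step is showing $Au\le\max\{f,\hat f\}=:g$ for the solution $u$ of (d). Here is how I would do it. Consider the obstacle problem over $K_{1}$ with the modified data $\hat f$; its solution $u$ satisfies $Au\ge f$ (membership in $K_{1}$) and the complementarity relation $\langle Au-f,u-\psi\rangle=0$ from condition (c). To get the upper bound, I would exploit the equivalent formulation (d): $\langle Av-Au,u-\psi\rangle\ge0$ for all $v\in K_{1}$. The idea from Gustafsson's approach is to plug in test functions of the form $v=u-A^{-1}w$ for suitable nonnegative $w$, but these must be checked to lie in $K_{1}$. The cleaner route is to show directly that $w:=Au-g$ satisfies $w_{+}=0$. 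Write $Au-g=(Au-f)-(g-f)$; both $(Au-f)_{+}=Au-f$ (as $u\in K_{1}$) and $g-f=[\hat f-f]_{+}\ge0$ belong to $L^{2}(\Omega)$ by Lemma~\ref{cor:inclusion} and \eqref{maxff}. Set $\phi:=A^{-1}[(Au-g)_{+}]\in X_{0}^{2s}(\Omega)\subset H_{0}^{s}(\Omega)$; then $v:=u-\phi$ — one must check $v\in K_{1}$, i.e.\ $Av=Au-(Au-g)_{+}\ge f$, which holds because on the set $\{Au>g\}$ we get $Av=g\ge f$ and elsewhere $Av=Au\ge f$. Testing (d) with this $v$ gives
\[
0\le\langle Av-Au,u-\psi\rangle=-\langle(Au-g)_{+},u-\psi\rangle=-\int_{\Omega}(Au-g)_{+}(u-\psi)\,dx.
\]
Since $u-\psi\ge0$ a.e., the right side is $\le0$, so $\int_{\Omega}(Au-g)_{+}(u-\psi)\,dx=0$; this alone does not yet give $(Au-g)_{+}=0$, so one needs a second ingredient. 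Here I would invoke the complementarity relation: on $\{u>\psi\}$ we have (from the standard obstacle-problem Euler-Lagrange structure, using $u\in X_{0}^{2s}$) that $Au=f\le g$, hence $(Au-g)_{+}=0$ there; and on $\{u=\psi\}$, a.e., $Au=A\psi=\hat f\le\max\{f,\hat f\}=g$, so again $(Au-g)_{+}=0$. Thus $(Au-g)_{+}=0$ a.e.\ in $\Omega$, which is the upper Lewy-Stampacchia bound $Au\le g$.

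Assembling: (f)$\iff$(g) is Stampacchia; the Lemma~\ref{lem:LS1}-solution $u$ lies in $K_{2}$ by the argument above and by Lemma~\ref{cor:inclusion} lies in $X_{0}^{2s}(\Omega)$; restriction of (d) gives (g) and uniqueness forces the (g)-solution to equal $u$, closing the loop with (a)--(e); and (h)$\iff$(c) via the $L^{2}$ identification of the pairing together with the sign of both factors. I expect the verification that the constructed competitor $v=u-A^{-1}[(Au-g)_{+}]$ genuinely lies in $K_{1}$, and the careful splitting of $\Omega$ into $\{u>\psi\}$ and $\{u=\psi\}$ using the complementarity relation, to be the delicate points; everything else is bookkeeping with the isomorphism $A|_{X_{0}^{2s}(\Omega)}\colon X_{0}^{2s}(\Omega)\to L^{2}(\Omega)$ from Remark~\ref{rem:Laplace} and the convexity/continuity of $\hat J$.
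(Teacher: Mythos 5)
Your bookkeeping steps agree with the paper: (f)$\iff$(g) via Stampacchia on the closed convex set $K_{2}$, and (h)$\iff$(c) via Lemma~\ref{cor:inclusion} and the sign of both factors, are exactly how the paper handles those parts. The problem is the central step, where you run the argument in the opposite direction from the paper (you start from the Lemma~\ref{lem:LS1} solution and try to prove $Au\le\max\{f,\hat f\}=:g$ directly), and this step as written is circular. To define the competitor $v=u-A^{-1}[(Au-g)_{+}]$ you need $(Au-g)_{+}$ to be an element of $L^{2}(\Omega)$ (or at least of $H^{-s}(\Omega)$), and you justify this by ``Lemma~\ref{cor:inclusion}'' and later by ``using $u\in X_{0}^{2s}$''; but Lemma~\ref{cor:inclusion} only applies to elements of $K_{2}$, and $u\in K_{2}$ (equivalently $Au\in L^{2}(\Omega)$) is precisely the conclusion of the Lewy--Stampacchia inequality you are in the middle of proving. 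For the Lemma~\ref{lem:LS1} solution all that is known a priori is $Au-f\ge 0$ in $H^{-s}(\Omega)$, i.e.\ at best a nonnegative measure; the positive part of $Au-g=(Au-f)-[\hat f-f]_{+}$ need not be an $L^{2}$ function, the pointwise dichotomy ``$Av=g$ on $\{Au>g\}$, $Av=Au$ elsewhere'' treats $Au$ as an a.e.-defined function, and the verification $v\in K_{1}$ therefore has no basis.

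There is a second, independent gap: the claim that on $\{u=\psi\}$ one has $Au=A\psi=\hat f$ a.e.\ is a locality property of the operator. It is (essentially) true for the classical Laplacian with $W^{2,p}$ regularity, but it fails for the spectral fractional Laplacian, which is nonlocal: $u=\psi$ on a set does not force $(-\Delta)^{s}u=(-\Delta)^{s}\psi$ there. This is exactly the difficulty the paper emphasizes (the failure of \eqref{eq:difficulties1}), and it is why the paper's proof goes the other way: it solves the variational inequality over $K_{2}$, where the two-sided bound $f\le Av\le g$ holds by definition and Lemma~\ref{cor:inclusion} supplies the $L^{2}$ regularity for free, then proves that this solution satisfies $u\ge\psi$ by testing with the truncation $A^{-1}g$ built on $N=\{u<\psi\}$ and combining Lemma~\ref{densitylemma} with Lemma~\ref{lem:MN17} to conclude $[u-\psi]_{-}=0$, and finally identifies it with the Lemma~\ref{lem:LS1} solution through condition (c) and uniqueness. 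Unless you replace your two problematic steps by an a priori regularity or penalization argument, your route does not close; I recommend reorganizing the proof so that $K_{2}$ is the set on which the auxiliary problem is posed, rather than a property to be verified for the original solution.
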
 

\begin{proof}[Proof of Lemma~{\rm \ref{lem:LS2}}]
  According to Stampacchia's theorem (see, e.g., \cite[Theorem 5.6]{Bre11}), it is well known that (f)$\iff$(g) and there exists a unique $u\in K_{2}$ that satisfies (f) and (g).

From the condition (g) and the symmetry of $\langle A\bullet,\bullet\rangle$,
it follows that 
\begin{align*}
0 \le\langle Au-\hat{f},v-u\rangle
=\langle A(u-\psi),v-u\rangle
=\langle A(v-u),u-\psi\rangle ,
\end{align*}
for all $v\in K_{2}$.
Setting $w:=u-\psi\in H_{0}^{s}(\Omega)$, we have
\begin{align}
 & \langle A(v-u),w\rangle \ge 0\quad\text{for all }v\in K_{2}.\label{eq:LS-equiv5}
\end{align}

We define a measurable set $N:=\{x\in\Omega:w(x)<0\}$
and a truncation of $Au$ by 
\[
g(x):=\begin{cases}
\max\{f,\hat{f}\} & \text{if }x\in N,\\
Au & \text{if }x\in\Omega\setminus N.
\end{cases}
\]
Using \eqref{eq:LS-assump1}, \eqref{maxff}, and Lemma~\ref{cor:inclusion},
we know that $g\in L^{2}(\Omega)$ and $A^{-1}g\in K_{2}$. Substituting
$v=A^{-1}g$ into \eqref{eq:LS-equiv5}, we have
\[
0\le\langle g-Au,w\rangle=\int_{N}(\max\{f,\hat{f}\}-Au)w\,dx.
\]
Because $\max\{f,\hat{f}\}-Au\ge0$
and $w<0$ in $N$, we reach 
\[
\max\{f,\hat{f}\}-Au=0\quad\text{a.e. in }N.
\]
Hence, we obtain
\begin{align*}
  \langle Aw, w_-\rangle
  &=\langle Au-\max\{f,\hat{f}\}, w_-\rangle +\langle \max\{f,\hat{f}\}-\hat{f}, w_-\rangle\\
  &=\int_{N}(\max\{f,\hat{f}\}-Au)w\,dx +\langle [\hat{f}-f]_-, w_-\rangle\\
  &=\langle [\hat{f}-f]_-, w_-\rangle ,
\end{align*}
where we remark that $\hat{f}-f\in \mathcal{M}(\overline{\Omega})$, and its negative part is denoted by $[\hat{f}-f]_-\in \mathcal{M}_+(\overline{\Omega})$,
as
\[
f\in L^2(\Omega),\quad \hat{f}\in H^{-s}(\Omega)\cap \mathcal{M}(\overline{\Omega}),
\quad [\hat{f}]_+\in L^2(\Omega).
\]
Furthermore, from \eqref{maxff}, it follows that
\[
[\hat{f}-f]_-=[\hat{f}-f]_+-(\hat{f}-f)=\max\{f,\hat{f}\}-\hat{f}\in H^{-s}(\Omega).
\]

Applying Lemma~\ref{densitylemma},
we can choose $\{ v_{m}\}_{m=1}^\infty \subset \mathcal{C}_{c}^{\infty}(\Omega)$ such that
$v_{m}(x)\ge 0$ for $x\in\Omega$ and $v_{m}\to w_-$ in $H_0^s(\Omega)$ as $m\to \infty$.
Because $\langle [\hat{f}-f]_-, v_{m}\rangle \ge 0$, we obtain
\begin{align*}
  \langle [\hat{f}-f]_-, w_-\rangle
  =\lim_{m\to\infty}  \langle [\hat{f}-f]_-, v_m\rangle \ge 0.
\end{align*}
These inequalities and Lemma~\ref{lem:MN17} imply
\begin{align*}
  \langle Aw_-, w_-\rangle  =\langle Aw_+, w_-\rangle  -\langle Aw, w_-\rangle \le 0.
\end{align*}
Using the coercivity of $\langle A\bullet,\bullet\rangle$ in $H_{0}^{s}(\Omega)$,
we conclude that $w_{-}=0$, that is, $u\ge \psi$
a.e. in $\Omega$, which shows that $u\in K_{0}$.

Substituting $v=A^{-1}f\in K_2$ into \eqref{eq:LS-equiv5}, we obtain
$\langle Au-f, u-\psi\rangle \le 0$.
However, from $u\in K_0\cap K_1$, $\langle Au-f, u-\psi\rangle = 0$ holds
and $u$ turns out to be the unique solution to (c) of Lemma~\ref{lem:LS1}.
Hence, the equivalence of conditions (a)-(g) has been shown.

Furthermore, if $u$ satisfies conditions (a)-(g), 
$Au\in L^2(\Omega)$ follows from Lemma~\ref{cor:inclusion}
and condition (c) implies that
$(Au-f)(u-\psi)\ge 0$ a.e. in $\Omega$ and that
\[
\int_\Omega (Au-f)(u-\psi)\,dx =0.
\]
Hence, it follows that $u$ satisfies condition (h).
Because the implication (h) $\Longrightarrow$ (c) is trivial,
conditions (a)-(h) are equivalent.
\end{proof}
 
Now, we are ready to prove Theorem~\ref{thm:LS-ineq}. 
\begin{proof}
[Proof of Theorem~{\rm \ref{thm:LS-ineq}}] The existence and uniqueness of the solution $u$ of the variation inequality \eqref{eq:LS-ineq1} is guaranteed by Lemma~\ref{lem:LS1}. Next, by Lemma~\ref{lem:LS2}, we know that $u\in K_{2}$. By Lemma~\ref{cor:inclusion}, we know that $u\in X_{0}^{2s}(\Omega)$. The inequality is simply defined by the definition of $K_{2}$. 
\end{proof}

\section{\label{sec:proof-Comparison}Proof of Theorem~\ref{thm:Comparison-var}}
Before we prove the comparison principle for variational inequality, we have the following lemma, which is based on \cite[Section II-6]{KS80}. 
\begin{lem}
	\label{lem:KS80-lemma1}If $u\in H_{0}^{s}(\Omega)$ is the unique
	solution to Lemma~\ref{lem:LS1}, then 
	\[
	u\le w\quad\text{a.e. in }\Omega\quad\text{for all }w\in K_{0}\cap K_{1}.
	\]
\end{lem}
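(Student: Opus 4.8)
The plan is to fix an arbitrary $w\in K_{0}\cap K_{1}$ and to show that $z:=(u-w)_{+}=0$ a.e.\ in $\Omega$, which is precisely the assertion $u\le w$ a.e. By Lemma~\ref{lem:A3} we have $z\in H_{0}^{s}(\Omega)$ and $z\ge0$ a.e.; moreover, since $u\ge\psi$ and $w\ge\psi$ a.e., the function $\min\{u,w\}=u-z$ lies in $K_{0}$. The argument then follows the classical scheme of \cite[Section II-6]{KS80}: test the variational inequality satisfied by $u$ against $\min\{u,w\}$, use the constraint $Aw\ge f$, and close the estimate by coercivity of $\langle A\bullet,\bullet\rangle$. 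The only genuinely new point is that the cross term between the positive and negative parts, which vanishes identically in the local setting, must here be replaced by the one-sided bound of Lemma~\ref{lem:MN17}.

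Concretely, I would first substitute $v=\min\{u,w\}=u-z\in K_{0}$ into condition (b) of Lemma~\ref{lem:LS1}, which yields $\langle Au-f,z\rangle\le0$. Since $w\in K_{1}$ and $z\ge0$, one also has $\langle Aw-f,z\rangle\ge0$, and therefore $\langle A(u-w),z\rangle=\langle Au-f,z\rangle-\langle Aw-f,z\rangle\le0$. Next I would decompose $u-w=z-z'$ with $z':=(u-w)_{-}\in H_{0}^{s}(\Omega)$, so that $z=(u-w)+z'$ and
\[
\langle Az,z\rangle=\langle A(u-w),z\rangle+\langle Az',z\rangle .
\]
For the last term, the symmetry of $\langle A\bullet,\bullet\rangle$, the identity $\lambda\int_{\Omega}(u-w)_{+}(u-w)_{-}\,dx=0$, and Lemma~\ref{lem:MN17} applied to $u-w$ give $\langle Az',z\rangle=\langle A(u-w)_{+},(u-w)_{-}\rangle\le0$. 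Combining the two estimates, $\langle Az,z\rangle\le0$; since $\langle A\bullet,\bullet\rangle$ is coercive on $H_{0}^{s}(\Omega)$ (because $\lambda\ge0$ and by \eqref{eq:duality-pair}), we conclude $z=0$, i.e.\ $u\le w$ a.e.\ in $\Omega$.

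The main obstacle is the treatment of the cross term $\langle A(u-w)_{+},(u-w)_{-}\rangle$: in \cite{AK19} this quantity is literally zero by \eqref{eq:difficulties1}, whereas for the fractional Laplacian the support of $(-\Delta)^{s}(u-w)_{+}$ may strictly exceed that of $(u-w)_{+}$, so only nonpositivity remains — and this is exactly what Lemma~\ref{lem:MN17} (hence, implicitly, the Caffarelli--Silvestre extension behind it) provides. One must also verify that adding the zeroth-order contribution $\lambda\,(u-w)_{+}(u-w)_{-}$ does not destroy the sign, which is immediate since $(u-w)_{+}(u-w)_{-}=0$ pointwise; the remaining steps are a routine combination of the variational inequality, the obstacle $Aw\ge f$, and coercivity.
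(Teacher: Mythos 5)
Your proposal is correct and follows essentially the same route as the paper: test condition (b) with $\min\{u,w\}$, use $w\in K_{1}$ to get $\langle A(u-w),[u-w]_{+}\rangle\le0$, control the cross term $\langle A[u-w]_{-},[u-w]_{+}\rangle$ via Lemma~\ref{lem:MN17} (the $\lambda$-part vanishing pointwise), and conclude by coercivity. The only difference is presentational — you make explicit the vanishing of the zeroth-order cross term, which the paper leaves implicit.
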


\begin{proof}
	Substituting $v:=\min\{u,w\}\in K_{0}$ in condition (b) of Lemma~\ref{lem:LS1},
	we have 
	\begin{equation}
		\langle Au,[u-w]_{+}\rangle\le\langle f,[u-w]_{+}\rangle,\label{eq:patch-ver11-3}
	\end{equation}
	because $[u-w]_{+}=u-\min\{u,w\}$. Because $w\in K_{1}$ and $[u-w]_{+}\ge0$,
	then 
	\begin{equation}
		\langle f,[u-w]_{+}\rangle\le\langle Aw,[u-w]_{+}\rangle.\label{eq:patch-ver11-4}
	\end{equation}
	Combining \eqref{eq:patch-ver11-3} and \eqref{eq:patch-ver11-4},
	we have 
	\[
	\langle Au,[u-w]_{+}\rangle\le\langle Aw,[u-w]_{+}\rangle.
	\]
	Conversely, by using Lemma~\ref{lem:MN17}, we obtain 
	\begin{align*}
		& \langle A[u-w]_{+},[u-w]_{+}\rangle\\
		& =\langle A(u-w),[u-w]_{+}\rangle+\langle A[u-w]_{-},[u-w]_{+}\rangle\\
		& \le\langle A(u-w),[u-w]_{+}\rangle\\
		& \le0.
	\end{align*}
	Hence, we have $\langle A[u-w]_{+},[u-w]_{+}\rangle=0$, which implies
	\[
	[u-w]_{+}=0\text{ a.e. in }\Omega,
	\]
	which is our desired result. 
\end{proof}
Now, we are ready to prove Theorem~\ref{thm:Comparison-var}. 
\begin{proof}
	[Proof of Theorem~{\rm \ref{thm:Comparison-var}}] Because $f_{1}\le f_{2}$
	in $H^{-s}(\Omega)$, then $Au_{2}\ge f_{2}\ge f_{1}$, i.e. $u_{2}\in K_{1}^{1}$.
	Moreover, because $\psi_{1}\le\psi_{2}$ a.e. in $\Omega$, we have
	$u_{2}\ge\psi_{2}\ge\psi_{1}$, i.e. $u_{2}\in K_{0}^{1}$. This shows
	that 
	\[
	u_{2}\in K_{0}^{1}\cap K_{1}^{1}.
	\]
	Therefore, we can apply Lemma~\ref{lem:KS80-lemma1} with $(f,\psi)=(f_{1},\psi_{1})$,
	that is, $K_{0}=K_{0}^{1}$ and $K_{1}=K_{1}^{1}$), as well as $u=u_{1}$
	and $w=u_{2}$. Hence we obtain $u_{1}\le u_{2}$ a.e. in $\Omega$. 
\end{proof}

\section{\label{sec:Proof-of-Uniqueness}Proof of Theorem~\ref{thm:Uniqueness}}

Employing the ideas in \cite{AK19} (see also \cite{Bre11}), we can obtain the following chain-rule for the function $t\mapsto\phi(u(t))$. Indeed, by choosing $V = X_{0}^{2s}(\Omega)$ and $H = L^{2}(\Omega)$ in Theorem~\ref{thm:chain-rule}, together with \eqref{eq:duality-pair}, we can obtain the following lemma. 
\begin{lem}
\label{lem:chain}If $u\in H^{1}(0,T;L^{2}(\Omega))\cap L^{2}(0,T;X_{0}^{2s}(\Omega))$, then  
\begin{enumerate}
\item [(i)] the function 
\[
t\mapsto\phi(u(t))=\frac{1}{2}\int_{\Omega}|(-\Delta)^{s/2}u(x,t)|^{2}\,dx
\]
belongs to $W^{1,1}(0,T)$; 
\item [(ii)] for a.e. $t\in(0,T)$, it holds that 
\[
\frac{d}{dt}\bigg[\int_{\Omega}|(-\Delta)^{s/2}u(x,t)|^{2}\,dx\bigg]=2\int_{\Omega}(\partial_{t}u)((-\Delta)^{s}u)\,dx.
\]
\item [(iii)] $u\in\mathcal{C}([0,T];H_{0}^{s}(\Omega))$. 
\end{enumerate}
\end{lem}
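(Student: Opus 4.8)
The plan is to deduce Lemma~\ref{lem:chain} from the abstract chain rule in Theorem~\ref{thm:chain-rule} by choosing the right Gelfand-triple structure, and then to read off the three conclusions. Recall that the abstract theorem requires a Hilbert space $V$ densely and continuously embedded into a Hilbert space $H$, with $H$ identified with its dual so that $V\hookrightarrow H\hookrightarrow V'$. Here I would take $H=L^{2}(\Omega)$ and $V=X_{0}^{2s}(\Omega)$, equipped with the norm $\|v\|_{X_{0}^{2s}(\Omega)}=\|(-\Delta)^{s}v\|_{L^{2}(\Omega)}$. The functional playing the role of the quadratic energy is
\[
\phi(v):=\frac{1}{2}\int_{\Omega}|(-\Delta)^{s/2}v(x)|^{2}\,dx=\frac{1}{2}\langle(-\Delta)^{s}v,v\rangle,
\]
which by \eqref{eq:duality-pair} is (equivalent to) the square of the $H_{0}^{s}(\Omega)$-norm and whose Fréchet derivative at $v$, as an element of $V'$, is $(-\Delta)^{s}v$; since $u(t)\in X_{0}^{2s}(\Omega)$ we in fact have $(-\Delta)^{s}u(t)\in L^{2}(\Omega)=H$, which is what makes the pairing $\int_{\Omega}(\partial_{t}u)((-\Delta)^{s}u)\,dx$ meaningful as an honest $L^{2}$-inner product rather than a duality bracket.

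With these identifications the hypotheses of Theorem~\ref{thm:chain-rule} are exactly the assumption $u\in H^{1}(0,T;L^{2}(\Omega))\cap L^{2}(0,T;X_{0}^{2s}(\Omega))$, i.e. $u\in L^{2}(0,T;V)$ with $\partial_{t}u\in L^{2}(0,T;H)$. The theorem then gives: (i) $t\mapsto\phi(u(t))$ lies in $W^{1,1}(0,T)$; (ii) the distributional derivative satisfies $\frac{d}{dt}\phi(u(t))=\langle\partial_{t}u(t),(-\Delta)^{s}u(t)\rangle$ for a.e. $t$, which after multiplying by $2$ and using $(-\Delta)^{s}u(t)\in L^{2}(\Omega)$ to rewrite the bracket as an $L^{2}$-integral is precisely the stated identity; and (iii) the "energy equality" part of the abstract theorem yields that $t\mapsto\phi(u(t))^{1/2}$, hence $t\mapsto\|u(t)\|_{H_{0}^{s}(\Omega)}$ (up to the norm equivalence \eqref{eq:norm-equiv}), is continuous, and combined with weak continuity of $u$ into $H_{0}^{s}(\Omega)$ (which follows from $u\in L^{2}(0,T;X_{0}^{2s}(\Omega))\subset L^{2}(0,T;H_{0}^{s}(\Omega))$ and $u\in C([0,T];L^{2}(\Omega))$) upgrades to strong continuity $u\in C([0,T];H_{0}^{s}(\Omega))$.

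The only point requiring genuine care—the main obstacle—is verifying that the abstract framework is being applied with the correct derivative: one must check that the Fréchet derivative of $\phi$ regarded as a map $V\to\mathbb{R}$ coincides with the operator $(-\Delta)^{s}$ acting as $V\to V'$, and that this is compatible with the canonical embedding $H\hookrightarrow V'$ once we know $(-\Delta)^{s}u(t)\in H$. This is where \eqref{eq:duality-pair}, \eqref{eq:norm-equiv}, and Remark~\ref{rem:Laplace} (which says $A|_{X_{0}^{2s}(\Omega)}$, and hence $(-\Delta)^{s}|_{X_{0}^{2s}(\Omega)}$, is an isomorphism onto $L^{2}(\Omega)$) are needed: they guarantee both that $\phi$ is a bounded coercive quadratic form on $V$ with the claimed derivative and that $V=X_{0}^{2s}(\Omega)$ is indeed complete and densely embedded in $H$ (density following from $\mathcal{C}_{c}^{\infty}(\Omega)\subset X_{0}^{2s}(\Omega)$ being $L^{2}$-dense). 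Once this identification is pinned down, parts (i)–(iii) are immediate translations of the conclusions of Theorem~\ref{thm:chain-rule}, and no further computation is required.
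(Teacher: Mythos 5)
Your overall strategy coincides with the paper's: Lemma~\ref{lem:chain} is deduced by specializing the abstract chain rule of Theorem~\ref{thm:chain-rule}. However, your instantiation contains a genuine flaw. You take $V=X_{0}^{2s}(\Omega)$ (with the norm $\|(-\Delta)^{s}\bullet\|_{L^{2}(\Omega)}$) while the quadratic energy you feed in is the form $a(u,v)=\int_{\Omega}((-\Delta)^{s/2}u)((-\Delta)^{s/2}v)\,dx$. Theorem~\ref{thm:chain-rule} requires $a$ to be \emph{coercive} on $V\times V$, and with this choice of $V$ the form is bounded but not coercive: spectrally, coercivity would mean $\sum_{k}\lambda_{k}^{s}u_{k}^{2}\ge c\sum_{k}\lambda_{k}^{2s}u_{k}^{2}$, which fails since $\lambda_{k}\to\infty$ (test with $u=\phi_{k}$). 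Hence the Lax--Milgram set-up $A\in{\rm Isom}(V,V')$, and the coercivity used in the proof of the theorem to obtain $u\in L^{\infty}(0,T;V)$ and strong $V$-continuity, are not available for your $V$. Moreover, if the theorem did apply with $V=X_{0}^{2s}(\Omega)$, its conclusion would be $u\in\mathcal{C}([0,T];X_{0}^{2s}(\Omega))$, a statement at the level of $\mathscr{D}((-\Delta)^{s})$ rather than $\mathscr{D}((-\Delta)^{s/2})$, which the hypotheses of Lemma~\ref{lem:chain} do not warrant; this is the sign that the choice of $V$ is wrong.

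The correct specialization, which is what the appeal to \eqref{eq:duality-pair} is really for, is $H=L^{2}(\Omega)$, $V=H_{0}^{s}(\Omega)$, $a(u,v)=\int_{\Omega}((-\Delta)^{s/2}u)((-\Delta)^{s/2}v)\,dx$: boundedness and coercivity on $H_{0}^{s}(\Omega)$ are exactly \eqref{eq:duality-pair}, and $A=(-\Delta)^{s}\in{\rm Isom}(H_{0}^{s}(\Omega),H^{-s}(\Omega))$. The space $X_{0}^{2s}(\Omega)$ enters only through the hypothesis of Theorem~\ref{thm:chain-rule}: the conditions ``$u(t)\in V$ a.e.\ and $Au\in L^{2}(0,T;H)$'' are precisely ``$u\in L^{2}(0,T;X_{0}^{2s}(\Omega))$'', while $u\in H^{1}(0,T;L^{2}(\Omega))=H^{1}(0,T;H)$. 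With this reading, (i), (ii), (iii) are verbatim the three conclusions of the theorem (with $\phi=\frac{1}{2}a(\bullet,\bullet)$ evaluated on the diagonal), and in particular (iii) follows directly from $u\in\mathcal{C}^{0}([0,T];V)$. Your separate upgrade argument for (iii) then becomes unnecessary; note also that, as written, it is incomplete, since weak continuity into $H_{0}^{s}(\Omega)$ does not follow from $u\in L^{2}(0,T;H_{0}^{s}(\Omega))$ together with $u\in\mathcal{C}([0,T];L^{2}(\Omega))$ alone --- one additionally needs the uniform bound $u\in L^{\infty}(0,T;H_{0}^{s}(\Omega))$, which would have to be extracted first (e.g.\ from (i)), exactly as is done inside the proof of Theorem~\ref{thm:chain-rule}.
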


Recall the following inequality 
\begin{equation}
|a_{+}-b_{+}|^{2}\le|a_{+}-b_{+}||a-b|=(a_{+}-b_{+})(a-b)\quad\text{for all }a,b\in\mathbb{R}.\label{eq:positive-part-ineq}
\end{equation}
Finally, we are ready to prove Theorem~\ref{thm:Uniqueness}. 
\begin{proof}
[Proof of Theorem~{\rm \ref{thm:Uniqueness}}] Let $u_{1},u_{2}$ be a strong
solution of \eqref{eq:diff1}, \eqref{eq:diff-Dirichlet}, and \eqref{eq:diff-initial},
with initial conditions $u_{1,0},u_{2,0}\in H_{0}^{s}(\Omega)$, respectively. Let
$u=u_{1}-u_{2}$. By Lemma~\ref{lem:chain}, we have 
\begin{align*}
 & \frac{d}{dt}\bigg[\int_{\Omega}|(-\Delta)^{s/2}u(x,t)|^{2}\,dx\bigg]\\
= & 2\int_{\Omega}(\partial_{t}u)((-\Delta)^{s}u)\,dx\\
= & -2\int_{\Omega}\bigg([-(-\Delta)^{s}u_{1}+f_{1}]_{+}-[-(-\Delta)^{s}u_{2}+f_{2}]_{+}\bigg)\times\\
 & \qquad\times\bigg((-(-\Delta)^{s}u_{1}+f_{1})-(-(-\Delta)^{s}u_{2}+f_{2})-f_{1}+f_{2}\bigg)\,dx.
\end{align*}
Using \eqref{eq:positive-part-ineq} with $a=-(-\Delta)^{s}u_{1}+f_{1}$
and $b=-(-\Delta)^{s}u_{2}+f_{2}$, we reach 
\begin{align*}
 & \frac{d}{dt}\bigg[\int_{\Omega}|(-\Delta)^{s/2}u(x,t)|^{2}\,dx\bigg]\\
\le & -2\int_{\Omega}\bigg|[-(-\Delta)^{s}u_{1}+f_{1}]_{+}-[-(-\Delta)^{s}u_{2}+f_{2}]_{+}\bigg|^{2}\,dx\\
 & +2\int_{\Omega}([-(-\Delta)^{s}u_{1}+f_{1}]_{+}-[-(-\Delta)^{s}u_{2}+f_{2}]_{+})(f_{1}-f_{2})\,dx\\
\le & -\int_{\Omega}\bigg|[-(-\Delta)^{s}u_{1}+f_{1}]_{+}-[-(-\Delta)^{s}u_{2}+f_{2}]_{+}\bigg|^{2}\,dx+\int_{\Omega}|f_{1}-f_{2}|^{2}\,dx\\
= & -\int_{\Omega}|\partial_{t}u|^{2}\,dx+\int_{\Omega}|f_{1}-f_{2}|^{2}\,dx
\end{align*}
for a.e. $t\in(0,T)$. This implies that 
\begin{align*}
 & \int_{\Omega}|\partial_{t}u_{1}-\partial_{t}u_{2}|^{2}\,dx\\
 & +\frac{d}{dt}\bigg[\int_{\Omega}|(-\Delta)^{s/2}u_{1}(x,t)-(-\Delta)^{s/2}u_{2}(x,t)|^{2}\,dx\bigg]\\
\le & \int_{\Omega}|f_{1}-f_{2}|^{2}\,dx.
\end{align*}
Hence, we obtain 
\begin{align*}
 & \int_{0}^{T}\|\partial_{t}u_{1}(t)-\partial_{t}u_{2}(t)\|_{L^{2}(\Omega)}^{2}\,dx\\
 & +\sup_{t\in[0,T]}\int_{\Omega}|(-\Delta)^{s/2}u_{1}(x,t)-(-\Delta)^{s/2}u_{2}(x,t)|^{2}\,dx\\
\le & 2\bigg[\|(-\Delta)^{s/2}u_{1,0}-(-\Delta)^{s/2}u_{2,0}\|_{L^{2}(\Omega)}^{2}+\int_{0}^{T}\|f_{1}(t)-f_{2}(t)\|_{L^{2}(\Omega)}^{2}\,dt.
\end{align*}
Using \eqref{eq:norm-equiv}, we obtain \eqref{eq:conti-sol}, which is our desired result.  
\end{proof}

\section{\label{sec:Sketch-Existence}Sketch of the Proof of Theorem~\ref{thm:Existence}}

The proof of Theorem~\ref{thm:Existence} is similar to that given in \cite{AK19}.
Here, we sketch some of the main ideas. Using $\tau$, we denote a division $\{t_{0},t_{1},\cdots,t_{m}\}$
of the interval $[0,T]$ given by 
\[
0=t_{0}<t_{1}<\cdots<t_{m}=T,\quad\tau_{k}:=t_{k}-t_{k-1}\quad\text{for }k=1,\cdots,m.
\]
Now, we shall approximate $u(x,t)$ by time discretization of $u_{k}(x)$.
By Lemma~\ref{lem:solvability}, we can construct $u_{k}\in H_{0}^{s}(\Omega)$
using the implicit Euler scheme 
\begin{equation}
\frac{u_{k}-u_{k-1}}{\tau_{k}}=[-(-\Delta)^{s}u_{k}+f_{k}]_{+}\quad\text{a.e. in }\Omega,\label{eq:Euler1}
\end{equation}
where $f_{k}\in L^{2}(\Omega)$ is given by 
\[
f_{k}(x):=\frac{1}{\tau_{k}}\int_{t_{k-1}}^{t_{k}}f(x,r)\,dr.
\]
For a given $u_{0}\in H_{0}^{s}(\Omega)$, we shall inductively define
$u_{k}\in H_{0}^{s}(\Omega)$ for $k=1,2,\cdots,m$ as a minimizer
of the functional 
\begin{equation}
J_{k}(v):=\frac{1}{2\tau_{k}}\int_{\Omega}|v|^{2}\,dx+\frac{1}{2}\int_{\Omega}|(-\Delta)^{s}v|^{2}\,dx-\bigg\langle\frac{u_{k-1}}{\tau_{k}}+f_{k},v\bigg\rangle\quad\text{for }v\in H_{0}^{s}(\Omega)\label{eq:Euler2}
\end{equation}
subject to 
\begin{equation}
v\in K_{0}^{k}:=\{v\in H_{0}^{s}(\Omega):v\ge u_{k-1}\text{ a.e. in }\Omega\}.\label{eq:Euler3}
\end{equation}
Similar to that given in \cite[Lemma 4.2]{AK19}, the scheme indeed works: 
\begin{lem}\label{lem:discretization}
Suppose that 
\begin{equation}\label{eq:Euler-assump}
u_{0}\in H_{0}^{s}(\Omega),\quad(-\Delta)^{s}u_{0}\in\mathcal{M}(\overline{\Omega}),\quad[(-\Delta)^{s}u_{0}]_{+}\in L^{2}(\Omega).
\end{equation}
For each $k=1,2,\cdots,m$, there exists a unique element $u_{k}\in K_{0}^{k}:=\{v\in H_{0}^{s}(\Omega):v\ge u_{k-1}\text{ a.e. in }\Omega\}$
that minimize \eqref{eq:Euler2} subject to \eqref{eq:Euler3}. Moreover,
for each $k=1,2,\cdots,m$, the minimizer $u_{k}\in X_{0}^{2s}(\Omega)$
verifies \eqref{eq:Euler1}, that is, 
\begin{align}
u_{k}-u_{k-1} & \ge0\quad\text{a.e. in }\Omega,\nonumber \\
g_{k} & \ge0\quad\text{a.e. in }\Omega,\label{eq:Euler4b} \\
\langle g_{k},u_{k}-u_{k-1}\rangle & =0,\label{eq:Euler4c}
\end{align}
where 
\[
g_{k}:=\frac{u_{k}-u_{k-1}}{\tau_{k}}+(-\Delta)^{s}u_{k}-f_{k}.
\]
Furthermore,  
\begin{align}
\langle g_{k},v-u_{k}\rangle & \ge0\quad\text{for all }v\in K_{0}^{k}, \label{eq:Euler4d}\\
\langle g_{k}+f_{k}-(-\Delta)^{s}u_{k-1},v-u_{k}\rangle & \ge0\quad\text{for all }v\in K_{1}^{k}, \label{eq:Euler4e}
\end{align}
where 
\[
K_{1}^{k}:=\bigg\{ v\in H_{0}^{s}(\Omega):\frac{v-u_{k-1}}{\tau_{k}}+(-\Delta)^{s}v-f_{k}\ge0\text{ in }H^{-s}(\Omega)\bigg\}.
\]
Moreover, it holds that 
\begin{equation}
0\le g_{k}\le[(-\Delta)^{s}u_{k-1}-f_{k}]_{+}\quad\text{a.e. in }\Omega \label{eq:Euler4f}
\end{equation}
for each $k=1,\cdots,m$. 
\end{lem}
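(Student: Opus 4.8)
The plan is to obtain every assertion of the lemma by applying Theorem~\ref{thm:LS-ineq} and Lemmas~\ref{lem:LS1}--\ref{lem:LS2} to a suitably shifted obstacle problem, and to run an induction on $k$ in which the Lewy-Stampacchia bound \eqref{eq:LS-ineq2} is exactly the mechanism that transports the regularity assumption \eqref{eq:Euler-assump} from $u_{k-1}$ to $u_{k}$. Fix $k\in\{1,\dots,m\}$ and, as the inductive hypothesis, assume $u_{k-1}\in H_{0}^{s}(\Omega)$ with $(-\Delta)^{s}u_{k-1}\in\mathcal{M}(\overline{\Omega})$ and $[(-\Delta)^{s}u_{k-1}]_{+}\in L^{2}(\Omega)$; for $k=1$ this is \eqref{eq:Euler-assump}. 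Apply Theorem~\ref{thm:LS-ineq} with $\lambda:=\tau_{k}^{-1}$ (so $A$ becomes $A_{k}:=(-\Delta)^{s}+\tau_{k}^{-1}$), obstacle $\psi:=u_{k-1}\in H_{0}^{s}(\Omega)$, and datum $f:=F_{k}:=\tau_{k}^{-1}u_{k-1}+f_{k}\in L^{2}(\Omega)$. Then $K_{0}=K_{0}^{k}$, the functional $J$ coincides with the functional $J_{k}$ in \eqref{eq:Euler2}, the set $K_{1}$ equals $K_{1}^{k}$ (since $A_{k}v-F_{k}=\tau_{k}^{-1}(v-u_{k-1})+(-\Delta)^{s}v-f_{k}$), and the auxiliary datum ``$\hat{f}$'' of Theorem~\ref{thm:LS-ineq} is $\hat{F}_{k}:=A_{k}\psi=\tau_{k}^{-1}u_{k-1}+(-\Delta)^{s}u_{k-1}$. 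Hypotheses \eqref{eq:LS-assump1}--\eqref{eq:LS-assump2} hold because $F_{k}\in L^{2}(\Omega)$, $\psi\in H_{0}^{s}(\Omega)$, and $(-\Delta)^{s}\psi=(-\Delta)^{s}u_{k-1}$ satisfies the inductive hypothesis.

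Theorem~\ref{thm:LS-ineq}, together with the equivalence (a)$\iff$(b) of Lemma~\ref{lem:LS1}, then yields the unique $u_{k}\in K_{0}^{k}$ that minimizes $J_{k}$ over $K_{0}^{k}$, and gives $u_{k}\in X_{0}^{2s}(\Omega)$ together with $F_{k}\le A_{k}u_{k}\le\max\{F_{k},\hat{F}_{k}\}$ a.e.\ in $\Omega$. Set $g_{k}:=A_{k}u_{k}-F_{k}=\tau_{k}^{-1}(u_{k}-u_{k-1})+(-\Delta)^{s}u_{k}-f_{k}$. Subtracting $F_{k}$ from the displayed inequality gives $0\le g_{k}\le\max\{F_{k},\hat{F}_{k}\}-F_{k}=[\hat{F}_{k}-F_{k}]_{+}=[(-\Delta)^{s}u_{k-1}-f_{k}]_{+}$, which is \eqref{eq:Euler4f} and contains the bound $g_{k}\ge0$ of \eqref{eq:Euler4b}; the remaining inequality $u_{k}-u_{k-1}\ge0$ of \eqref{eq:Euler4b} is just $u_{k}\in K_{0}^{k}$. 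Condition (h) of Lemma~\ref{lem:LS2} reads $(A_{k}u_{k}-F_{k})(u_{k}-\psi)=0$ a.e., i.e.\ $g_{k}(u_{k}-u_{k-1})=0$ a.e.; integrating gives \eqref{eq:Euler4c}, and combining the pointwise relations $g_{k}\ge0$, $u_{k}-u_{k-1}\ge0$, $g_{k}(u_{k}-u_{k-1})=0$ yields the scheme \eqref{eq:Euler1} by the usual case distinction (if $g_{k}(x)=0$ then $\tau_{k}^{-1}(u_{k}-u_{k-1})(x)=(-(-\Delta)^{s}u_{k}+f_{k})(x)\ge0$; if $\tau_{k}^{-1}(u_{k}-u_{k-1})(x)=0$ then $g_{k}(x)\ge0$ forces $(-(-\Delta)^{s}u_{k}+f_{k})(x)\le0$). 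Finally \eqref{eq:Euler4d} is condition (b) of Lemma~\ref{lem:LS1} rewritten via $A_{k}u_{k}-F_{k}=g_{k}$, and \eqref{eq:Euler4e} is condition (d) of Lemma~\ref{lem:LS1} rewritten via $g_{k}+f_{k}-(-\Delta)^{s}u_{k-1}=A_{k}u_{k}-A_{k}u_{k-1}=A_{k}u_{k}-\hat{F}_{k}$ on the set $K_{1}^{k}=K_{1}$.

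To close the induction, note that the majorant $[(-\Delta)^{s}u_{k-1}-f_{k}]_{+}$ lies in $L^{2}(\Omega)$ --- this is recorded in \eqref{maxff} via Lemma~\ref{lem:A1} --- so \eqref{eq:Euler4f} forces $g_{k}\in L^{2}(\Omega)$; then $(-\Delta)^{s}u_{k}=g_{k}-\tau_{k}^{-1}(u_{k}-u_{k-1})+f_{k}$ with $u_{k},u_{k-1},f_{k}\in L^{2}(\Omega)$ shows $(-\Delta)^{s}u_{k}\in L^{2}(\Omega)\hookrightarrow\mathcal{M}(\overline{\Omega})$ and hence $[(-\Delta)^{s}u_{k}]_{+}\in L^{2}(\Omega)$, so $u_{k}$ inherits the inductive hypothesis and the construction passes from $k$ to $k+1$. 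I expect the only genuine work to be the bookkeeping in the translation to the Lewy-Stampacchia framework --- checking that the auxiliary datum in Theorem~\ref{thm:LS-ineq} is $A_{k}u_{k-1}$, that $K_{1}^{k}$ coincides with the set $K_{1}$ there, and that $\max\{F_{k},\hat{F}_{k}\}-F_{k}$ collapses exactly to $[(-\Delta)^{s}u_{k-1}-f_{k}]_{+}$ --- after which \eqref{eq:Euler4f} is seen to be self-improving in $k$, so that \eqref{eq:Euler-assump} need only be imposed on $u_{0}$.
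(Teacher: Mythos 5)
Your proposal is correct and follows essentially the same route as the paper: reinterpret each time step as the obstacle problem for $A=(-\Delta)^{s}+\tau_{k}^{-1}$ with $\psi=u_{k-1}$ and $f=f_{k}+u_{k-1}/\tau_{k}$, verify \eqref{eq:LS-assump1}--\eqref{eq:LS-assump2} via Lemma~\ref{lem:A1}, and read off \eqref{eq:Euler4b}--\eqref{eq:Euler4f} from Theorem~\ref{thm:LS-ineq} and Lemmas~\ref{lem:LS1}--\ref{lem:LS2}. You merely make explicit the induction in $k$ (and the pointwise case distinction giving \eqref{eq:Euler1}) that the paper leaves implicit in its ``repeating the steps above'' step, so no substantive difference.
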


\begin{proof}
	\textbf{Step 1: The case when $k=1$.} Set 
	\[
	\sigma:=\frac{1}{\tau_{1}}>0,\quad A:=A_{\sigma}=(-\Delta)^{s}+\frac{1}{\tau_{1}},\quad f=f_{1}+\frac{u_{0}}{\tau_{1}}\in L^{2}(\Omega),\quad\psi:=u_{0}\in H_{0}^{s}(\Omega).
	\]
	Note that $f$ and $\psi$ satisfy \eqref{eq:LS-assump1}. By \eqref{eq:Euler-assump},
	we know that 
	\[
	A\psi-f=\overbrace{(-\Delta)^{s}u_{0}}^{\in\mathcal{M}(\overline{\Omega})}-\overbrace{f_{1}}^{\in L^{2}(\Omega)}.
	\]
	Because $[(-\Delta)^{s}u_{0}]_{+}\in L^{2}(\Omega)$, using Lemma~\ref{lem:A1},
	we know that 
	\[
	[A\psi-f]_{+}\in L^{2}(\Omega).
	\]
	Therefore, $[A\psi]_{+}\in L^{2}(\Omega)$ for all $\lambda\ge0$,
	that is, \eqref{eq:LS-assump2} holds. Therefore, by Lemma~\ref{lem:LS1}, and \ref{lem:LS2}, there exists a unique $u_{1}\in K_{0}^{1}$ of
	$J_{1}$ given in \eqref{eq:Euler2}, and \eqref{eq:Euler4b}--\eqref{eq:Euler4e}
	follow immediately from the fact $u_{1}\in K_{0}^{1}$ and Lemma~\ref{lem:LS1}(b)(c)(d).
	By Theorem~\ref{thm:LS-ineq}, we know that $u_{1}\in X_{0}^{2s}(\Omega)$. 
	
	\textbf{Step 2: The case when $k=2,\cdots,m$.} Set 
	\[
	\sigma:=\frac{1}{\tau_{k}}>0,\quad A:=A_{\sigma}=(-\Delta)^{s}+\frac{1}{\tau_{k}},\quad f=f_{k}+\frac{u_{k-1}}{\tau_{k}}\in L^{2}(\Omega),\quad\psi:=u_{k-1}\in H_{0}^{s}(\Omega).
	\]
	Repeating the steps above, we can obtain analogous results, and show that 
	\[
	u_{k}\in X_{0}^{2s}(\Omega)\quad\text{for all }k=2,\cdots,m.
	\]
	
	\textbf{Step 3: Conclusion.} Finally, by Theorem~\ref{thm:LS-ineq},
	we have 
	\begin{equation}
		f_{k}+\frac{u_{k-1}}{\tau_{k}}\le\frac{u_{k}}{\tau_{k}}+(-\Delta)^{s}u_{k}\le\max\bigg\{ f_{k}+\frac{u_{k-1}}{\tau_{k}},\frac{u_{k-1}}{\tau_{k}}+(-\Delta)^{s}u_{k-1}\bigg\}\quad\text{for a.e. }x\in\Omega,\label{eq:patch1a}
	\end{equation}
	which is equivalent to \eqref{eq:Euler4f}. Indeed, the right-hand side of \eqref{eq:patch1a} is in $L^{2}(\Omega)$ can be shown using
	\eqref{eq:Euler-assump} and Lemma~\ref{lem:A1}. 
\end{proof}

With Lemma~\ref{lem:discretization}, we are now ready to sketch the proof of Theorem~\ref{thm:Existence}. 

\begin{proof}
[Sketch of the proof of Theorem~{\rm \ref{thm:Existence}}] We define the
piecewise linear interpolant $u_{\tau}\in W^{1,\infty}(0,T;H_{0}^{s}(\Omega))$
of $\{u_{k}\}$, and the piecewise constant interpolants $\overline{u}_{\tau}\in L^{\infty}(0,T;H_{0}^{s}(\Omega))$
and $\overline{f}_{\tau}\in L^{\infty}(0,T;L^{2}(\Omega))$ of $\{u_{k}\}$
and $\{f_{k}\}$, respectively, by 
\begin{align*}
u_{\tau}(t):=u_{k-1}+\frac{t-t_{k-1}}{\tau}(u_{k}-u_{k-1}) & \quad\text{for }t\in[t_{k-1},t_{k}]\text{ and }k=1,\cdots,m,\\
\overline{u}_{\tau}(t):=u_{k},\quad\overline{f}_{\tau}(t):=f_{k} & \quad\text{for }t\in(t_{k-1},t_{k}]\text{ and }k=1,\cdots,m.
\end{align*}
Summing \eqref{eq:Euler4c} for $k=1,\cdots,\ell$ for arbitrary $\ell\le m$,
we obtain 
\begin{equation}
\int_{0}^{t}\|\partial_{t}u_{\tau}(r)\|_{L^{2}(\Omega)}^{2}\,dr+\|\overline{u}_{\tau}(t)\|_{H_{0}^{s}(\Omega)}^{2}\le\|u_{0}\|_{H_{0}^{s}(\Omega)}^{2}+\int_{0}^{t}\|\overline{f}_{\tau}(r)\|_{L^{2}(\Omega)}^{2}\,dr \label{eq:patch2}
\end{equation}
for all $t\in[0,T]$. Hence, we obtain a uniform bound: 
\begin{align}
 & \|\partial_{t}u_{\tau}\|_{L^{2}(0,T;L^{2}(\Omega))}^{2}+\|\overline{u}_{\tau}\|_{L^{\infty}(0,T;H_{0}^{s}(\Omega))}^{2}+\|u_{\tau}\|_{L^{\infty}(0,T;H_{0}^{s}(\Omega))}^{2} \nonumber \\
\le & C\bigg[\|u_{0}\|_{H_{0}^{s}(\Omega)}^{2}+\|f\|_{L^{2}(0,T;L^{2}(\Omega))}\bigg]. \label{eq:uniform-bound}
\end{align}
Note that 
\begin{equation}
\overline{f}_{\tau} \rightarrow f \quad \text{in }L^{2}(0,T;L^{2}(\Omega))\text{ strong} \label{eq:conv-inter}
\end{equation}
as $m \rightarrow \infty$, such that $|\tau| \rightarrow 0$. Indeed, $\{ \overline{f}_{\tau} \}$ is bounded in $L^{2}(0,T;L^{2}(\Omega))$, and 
\[
\| \overline{f}_{\tau} \|_{L^{2}(0,T;L^{2}(\Omega))} \le \| f \|_{L^{2}(0,T;L^{2}(\Omega))}.
\] 

Using \eqref{eq:uniform-bound}, we can show that up to a (non-relabeled) sub-sequence, there exists 
\[
u \in L^{\infty}(0,T;H_{0}^{s}(\Omega)) \cap H^{1}(0,T;L^{2}(\Omega)) \cap \mathcal{C}([0,T];L^{2}(\Omega))
\]
such that  
\begin{subequations}
\begin{align}
\overline{u}_{\tau}\rightarrow u & \quad\text{in }L^{\infty}(0,T;H_{0}^{s}(\Omega))\text{ weak-}* \label{eq:conv-a}\\
u_{\tau}\rightarrow u & \quad\text{in }L^{\infty}(0,T;H_{0}^{s}(\Omega))\text{ weak-}* \label{eq:conv-b}\\
 & \quad\text{in }H^{1}(0,T;L^{2}(\Omega))\text{ weak} \label{eq:conv-c}\\
 & \quad\text{in }\mathcal{C}([0,T];L^{2}(\Omega))\text{ strong} \label{eq:conv-d}\\
u_{\tau}(T)\rightarrow u(T) & \quad\text{in }H_{0}^{s}(\Omega)\text{ weak}. \label{eq:conv-e}
\end{align}
\end{subequations}
Note that the convergence of \eqref{eq:conv-a}, \eqref{eq:conv-b}, and \eqref{eq:conv-c} is followed by \eqref{eq:uniform-bound} and the Banach-Alaoglu Theorem. Moreover, the limits of $u_{\tau}$ and $\overline{u}_{\tau}$ are identical, which can be seen using the following inequality: 
\begin{align}
\| u_{\tau}(t) - \overline{u}_{\tau}(t) \|_{L^{2}(\Omega)} & = \bigg| \frac{t_{k}-t}{\tau_{k}}\bigg| \| u_{k} - \overline{u}_{k} \|_{L^{2}(\Omega)} \nonumber \\
& \le \bigg\| \frac{u_{k} - u_{k-1}}{\tau_{k}} \bigg\|_{L^{2}(\Omega)} \tau_{k} \nonumber \\
& \le C |\tau|^{1/2}
\end{align}
for all $t \in (t_{k-1},t_{k}]$, and for all $k=1,2,\cdots ,m$. Furthermore, using the compact embedding \eqref{eq:cpt-emb}
and Ascoli's compactness lemma, we obtain the strong convergence \eqref{eq:conv-d}. Using \eqref{eq:patch2}, we know that $u_{\tau}(T) = u_{m}$ is bounded in $H_{0}^{s}(\Omega)$. Therefore, we can obtain \eqref{eq:conv-e} from \eqref{eq:conv-d}. 

Next, following the arguments in \cite{AK19}, we can show that $\overline{u}_{\tau}$
is uniformly bounded in $L^{2}(0,T;X_{0}^{2s}(\Omega))$: We first rewrite \eqref{eq:Euler4f} as 
\begin{equation}
-\frac{u_{k}-u_{k-1}}{\tau_{k}} + f_{k} \le (-\Delta)^{s}u_{k} \le \max \bigg\{ -\frac{u_{k}-u_{k-1}}{\tau_{k}} + f_{k} , -\frac{u_{k}-u_{k-1}}{\tau_{k}} + (-\Delta)^{s}u_{k-1} \bigg\} \label{eq:patch3}
\end{equation}
a.e. in $\Omega$ for $k=2,3,\cdots,m$. Because $(u_{k} - u_{k-1})/\tau_{k} \ge 0$ a.e. in $\Omega$ (because $u\in K_{0}^{k}$), from \eqref{eq:exist2}, we have 
\[
\text{RHS of \eqref{eq:patch3}} \le \max \{ f_{k} , (-\Delta)^{s}u_{k-1} \} \le \max \{ f^{*} , (-\Delta)^{s}u_{k-1} \}
\]
a.e. in $\Omega$. Therefore, by repeating the inequality above, we have 
\[
(-\Delta)^{s}u_{k} \le \max \{ f^{*} , (-\Delta)^{s}u_{1} \} \quad \text{for all }k=2,3,\cdots,m. 
\]
By Lemma~\ref{lem:A1} and \eqref{eq:Euler4f}, we have 
\begin{align*}
\max \{ f^{*} , (-\Delta)^{s}u_{1} \} & \le \max \bigg\{ f^{*} , [(-\Delta)^{s}u_{0} - f_{1}]_{+} - \frac{u_{1}-u_{0}}{\tau_{1}} + f_{1} \bigg\} \\
& \le \max \{ f^{*} , [(-\Delta)^{s}u_{0} - f_{1}]_{+} + f_{1} \} \\
& \le \max \{ f^{*} , [(-\Delta)^{s}u_{0}]_{+} + [- f_{1}]_{+} + f_{1} \} \\
& = \max \{ f^{*} , [(-\Delta)^{s}u_{0}]_{+} + [f_{1}]_{+} \} \\
& \le [(-\Delta)^{s}u_{0}]_{+} + [f^{*}]_{+}.
\end{align*}
Hence, we obtain
\[
-\frac{u_{k}-u_{k-1}}{\tau_{k}} + f_{k} \le [(-\Delta)^{s}u_{0}]_{+} + [f^{*}]_{+} \quad \text{a.e. in }\Omega.
\]
Therefore, with \eqref{eq:Euler-assump}, we have 
\[
\| (-\Delta)^{s} u_{k} \|_{L^{2}(\Omega)}^{2} \le 2 \bigg( \| [(-\Delta)^{s}u_{0}]_{+} \|_{L^{2}(\Omega)}^{2} + \|f^{*}\|_{L^{2}(\Omega)}^{2} + \bigg\| \frac{u_{k} - u_{k-1}}{\tau_{k}} \bigg\|_{L^{2}(\Omega)}^{2} + \| f_{k} \|_{L^{2}(\Omega)}^{2} \bigg)
\]
for $k=1,2,\cdots,m$. Using \eqref{eq:uniform-bound} and \eqref{eq:conv-inter}, we obtain 
\begin{align}
& \int_{0}^{T} \| (-\Delta)^{s} \overline{u}_{\tau} (t) \|_{L^{2}(\Omega)}^{2} \,dx \nonumber \\
& \le CT(\|f^{*}\|_{L^{2}(\Omega)}^{2} + 1) + C \int_{0}^{T} \| \partial_{t} u_{\tau}(t) \|_{L^{2}(\Omega)}^{2} \,dt + C \int_{0}^{T} \|\overline{f}_{\tau}(t)\|_{L^{2}}^{2} \,dt \le C.
\end{align}
Therefore, up to a (non-relabeled) sub-sequence, we have 
\[
(-\Delta)^{s}\overline{u}_{\tau}\rightarrow(-\Delta)^{s}u\quad\text{weakly in }L^{2}(0,T;L^{2}(\Omega)),
\]
which shows that $u(t)\in X_{0}^{2s}(\Omega)$. Therefore, the piecewise
constant interpolant $\overline{g}_{\tau}$ of $\{g_{k}\}$ defined
by 
\[
\overline{g}_{\tau}(t):=g_{k}=\frac{u_{k}-u_{k-1}}{\tau_{k}}+(-\Delta)^{s}u_{k}-f_{k}\quad\text{for }t\in(t_{k-1},t_{k}]
\]
which converges to 
\[
g:=\partial_{t}u+(-\Delta)^{s}u-f\quad\text{weakly in }L^{2}(0,T;L^{2}(\Omega)).
\]
Finally, following Minty's approach in \cite{AK19}, which involves sub-differential calculus, we can show that $u$ solves \eqref{eq:diff1} a.e. in $\Omega\times(0,T)$.  
\end{proof}
\appendix

\section{\label{sec:Aux-Lemma}Some Auxiliary Lemmas}

In this section, we shall reference some results in \cite{AK19}. 
\begin{lem}
\label{lem:A1}Let $1\le p\le\infty$, $\mu\in\mathcal{M}(\overline{\Omega})$
and $\zeta\in L^{p}(\Omega)$. It holds that 
\[
[\mu]_{+}\in L^{p}(\Omega)\quad\text{if and only if}\quad[\mu+\zeta]_{+}\in L^{p}(\Omega).
\]
Moreover, $\|[\mu+\zeta]_{+}\|_{L^{p}(\Omega)}\le\|[\mu]_{+}\|_{L^{p}(\Omega)}+\|[\zeta]_{+}\|_{L^{p}(\Omega)}$.
An analogous conclusion also holds for the negative part. Here, $\mu+\zeta$ means that $\mu+\mu_{\zeta}$. 
\end{lem}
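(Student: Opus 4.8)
The plan is to reduce the statement to the Jordan decomposition of signed Radon measures together with elementary facts about absolutely continuous measures. Denote by $\mathcal{L}^{n}$ the $n$-dimensional Lebesgue measure, write $\mu=\mu_{+}-\mu_{-}$ for the Jordan decomposition of $\mu$ and $|\mu|=\mu_{+}+\mu_{-}$ for its total variation, so that $\mu_{\pm}=\frac{1}{2}(|\mu|\pm\mu)$. Recall that, under the convention fixed before \eqref{maxff}, the hypothesis $[\mu]_{+}\in L^{p}(\Omega)$ means that $\mu_{+}\ll\mathcal{L}^{n}$ with density $h:=\frac{d\mu_{+}}{d\mathcal{L}^{n}}\in L^{p}(\Omega)$. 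Writing $\mu_{\zeta}:=\zeta\,d\mathcal{L}^{n}$, its Jordan parts are $(\mu_{\zeta})_{\pm}=[\zeta]_{\pm}\,d\mathcal{L}^{n}$, where $[\zeta]_{+}$ and $[\zeta]_{-}$ denote the pointwise positive and negative parts of $\zeta$; set $\nu:=\mu+\mu_{\zeta}$.

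The first step is the measure inequality $\nu_{+}\le\mu_{+}+[\zeta]_{+}\,d\mathcal{L}^{n}$ on Borel subsets of $\overline{\Omega}$. It follows from the triangle inequality for total variations, $|\nu|=|\mu+\mu_{\zeta}|\le|\mu|+|\mu_{\zeta}|$ (obtained by testing against Borel partitions), combined with $\nu_{+}=\frac{1}{2}(|\nu|+\nu)$:
\[
\nu_{+}\le\tfrac{1}{2}\big(|\mu|+\mu\big)+\tfrac{1}{2}\big(|\mu_{\zeta}|+\mu_{\zeta}\big)=\mu_{+}+(\mu_{\zeta})_{+}=\mu_{+}+[\zeta]_{+}\,d\mathcal{L}^{n}.
\]
Now suppose $[\mu]_{+}\in L^{p}(\Omega)$. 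Then the right-hand side equals the absolutely continuous measure $(h+[\zeta]_{+})\,d\mathcal{L}^{n}$, so $\nu_{+}$ vanishes on $\mathcal{L}^{n}$-null sets; hence $\nu_{+}\ll\mathcal{L}^{n}$ and $\nu_{+}=g\,d\mathcal{L}^{n}$ for some measurable $g\ge0$. Testing the resulting inequality $\int_{E}g\,d\mathcal{L}^{n}\le\int_{E}(h+[\zeta]_{+})\,d\mathcal{L}^{n}$ against $E:=\{g>h+[\zeta]_{+}\}$, which has finite measure since $\Omega$ is bounded, forces $g\le h+[\zeta]_{+}$ a.e.\ in $\Omega$. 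Therefore $[\nu]_{+}=g\in L^{p}(\Omega)$ and, by the triangle inequality in $L^{p}$,
\[
\|[\mu+\zeta]_{+}\|_{L^{p}(\Omega)}=\|g\|_{L^{p}(\Omega)}\le\|h\|_{L^{p}(\Omega)}+\|[\zeta]_{+}\|_{L^{p}(\Omega)}=\|[\mu]_{+}\|_{L^{p}(\Omega)}+\|[\zeta]_{+}\|_{L^{p}(\Omega)},
\]
with the case $p=\infty$ identical. This yields the ``only if'' direction and the stated estimate.

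For the converse, observe that $\mu=\nu+\mu_{-\zeta}$ with $-\zeta\in L^{p}(\Omega)$, so applying the implication just proved to the pair $(\nu,-\zeta)$ gives $[\nu]_{+}\in L^{p}(\Omega)\implies[\mu]_{+}=[\nu+(-\zeta)]_{+}\in L^{p}(\Omega)$. Finally, the negative-part assertion follows by applying the positive-part result to $-\mu$ and $-\zeta$, using $[\bullet]_{-}=[-\bullet]_{+}$, $-(\mu+\zeta)=(-\mu)+(-\zeta)$, and $[-\zeta]_{+}=[\zeta]_{-}$. I do not anticipate a genuine obstacle: the only two points requiring a little care are the total-variation triangle inequality $|\mu+\mu_{\zeta}|\le|\mu|+|\mu_{\zeta}|$ and the standard fact that $\int_{E}g\,d\mathcal{L}^{n}\le\int_{E}\phi\,d\mathcal{L}^{n}$ for all Borel $E$ implies $g\le\phi$ a.e., both routine and both relying on $\Omega$ being bounded so that the relevant comparison sets have finite Lebesgue measure; the remainder is bookkeeping with the Jordan decomposition.
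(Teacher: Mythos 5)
Your proof is correct: the subadditivity of positive parts, $(\mu+\mu_{\zeta})_{+}\le\mu_{+}+[\zeta]_{+}\,d\mathcal{L}^{n}$, obtained from the Jordan decomposition and the triangle inequality for total variations, does yield absolute continuity of $(\mu+\mu_{\zeta})_{+}$ with density bounded by $h+[\zeta]_{+}$, and the converse and the negative-part statement follow exactly as you indicate by the substitutions $(\nu,-\zeta)$ and $(-\mu,-\zeta)$. Note that the paper itself gives no proof of Lemma~\ref{lem:A1} (it is quoted from \cite{AK19}), so your argument simply supplies, in the standard way, the measure-theoretic proof that the authors delegate to that reference.
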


We also need the following lemma, which can be found in \cite[Lemma 2.6]{War15}.
Here, we give a simple alternative proof.
\begin{lem}
\label{lem:A3}Let $s\in(0,1)$. If $v\in H_{0}^{s}(\Omega)$,
then $[v]_{+}\in H_{0}^{s}(\Omega)$ and $\|[v]_{+}\|_{H_{0}^{s}(\Omega)}\le\|v\|_{H_{0}^{s}(\Omega)}$. 
\end{lem}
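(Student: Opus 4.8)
The plan is to split the claim into the membership $[v]_{+}\in H_{0}^{s}(\Omega)$ and the norm bound $\|[v]_{+}\|_{H_{0}^{s}(\Omega)}\le\|v\|_{H_{0}^{s}(\Omega)}$, and to observe that the second is elementary once the first is known. Indeed, the positive-part function $t\mapsto[t]_{+}=\max\{t,0\}$ is $1$-Lipschitz, so for a.e.\ $x,z\in\Omega$ we have $|[v(x)]_{+}|\le|v(x)|$ and $|[v(x)]_{+}-[v(z)]_{+}|\le|v(x)-v(z)|$. Squaring and integrating the first gives $\|[v]_{+}\|_{L^{2}(\Omega)}\le\|v\|_{L^{2}(\Omega)}$; doing the same with the second against the kernel $|x-z|^{-n-2s}$ gives $[[v]_{+}]_{H_{0}^{s}(\Omega)}\le[v]_{H_{0}^{s}(\Omega)}$; summing the squares yields the inequality in the norm \eqref{eq:sobolev-norm}. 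For $s=1/2$ one moreover notes that $[v(x)]_{+}^{2}\le v(x)^{2}$ forces $[v]_{+}$ to inherit the Lions-Magenes condition \eqref{eq:Lions-Magnes-condition} from $v$.

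It remains to prove $[v]_{+}\in H_{0}^{s}(\Omega)$. When $0<s<1/2$, the characterization \eqref{eq:tracea} identifies $H_{0}^{s}(\Omega)$ with the set of $L^{2}$ functions of finite Gagliardo seminorm, so the two bounds above already place $[v]_{+}$ in $H_{0}^{s}(\Omega)$; the case $s=1/2$ is the same using \eqref{eq:traceb} together with the Lions-Magenes check just mentioned. The genuine difficulty is the range $1/2<s<1$, where by \eqref{eq:tracec} finiteness of the Gagliardo seminorm is \emph{strictly weaker} than membership in $H_{0}^{s}(\Omega)$; this is the main obstacle. Here I would use the Caffarelli-Silvestre extension. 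Let $V=V(v)\in\mathscr{W}^{s}(v)$ be the minimizer of $\mathscr{E}^{s}$ furnished by Lemma~\ref{lem:minimizing-problem} and put $\widetilde{V}:=[V]_{+}$. Then $\widetilde{V}\in H^{1}_{\mathrm{loc}}(\Omega\times(0,\infty))$ with $|\nabla_{x,y}\widetilde{V}|\le|\nabla_{x,y}V|$ a.e., hence $\mathscr{E}^{s}(\widetilde{V})\le\mathscr{E}^{s}(V)<\infty$; it vanishes on $\partial\Omega\times(0,\infty)$ because $V$ does; and since the trace at $y=0$ is the $L^{2}(\Omega)$-limit of $V(\cdot,y)$ in the sense of \cite[Proposition 2.5]{NOS15} and the positive part is continuous on $L^{2}(\Omega)$, the trace of $\widetilde{V}$ equals $[v]_{+}$. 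The trace theory of the extension then forces $[v]_{+}\in H_{0}^{s}(\Omega)$, and the norm bound of the first paragraph applies verbatim.

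Alternatively, and more uniformly in $s\in(0,1)$, one can argue by density and weak compactness: choose $v_{n}\in\mathcal{C}_{c}^{\infty}(\Omega)$ with $v_{n}\to v$ in $H_{0}^{s}(\Omega)$; each $[v_{n}]_{+}$ is Lipschitz with compact support in $\Omega$, hence (mollify and use that $\|w\|_{H_{0}^{s}(\Omega)}\lesssim\|w\|_{H^{1}(\mathbb{R}^{n})}$ for $w$ supported in a fixed compact subset of $\Omega$) lies in $H_{0}^{s}(\Omega)$; the pointwise estimates of the first paragraph show $\{[v_{n}]_{+}\}$ is bounded in the Hilbert space $H_{0}^{s}(\Omega)$, while $[v_{n}]_{+}\to[v]_{+}$ in $L^{2}(\Omega)$, so a subsequence converges weakly in $H_{0}^{s}(\Omega)$ to a limit that must be $[v]_{+}$; weak lower semicontinuity of the norm then gives $\|[v]_{+}\|_{H_{0}^{s}(\Omega)}\le\liminf_{n}\|[v_{n}]_{+}\|_{H_{0}^{s}(\Omega)}\le\liminf_{n}\|v_{n}\|_{H_{0}^{s}(\Omega)}=\|v\|_{H_{0}^{s}(\Omega)}$ in a single step, recovering both conclusions. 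In either route I expect the only delicate bookkeeping to be the $s=1/2$ endpoint (the distance-weighted term) and, in the extension route, the commutation of the positive part with the weighted trace; cf.\ \cite[Lemma 2.6]{War15}.
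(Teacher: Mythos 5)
Your proposal is correct, and it is in fact more complete than the proof the paper gives. The paper's proof consists essentially of your first paragraph alone: the pointwise bounds $|a_{+}|\le|a|$ and $|a_{+}-b_{+}|\le|a-b|$ applied to the $L^{2}$ part and the Gagliardo seminorm in \eqref{eq:sobolev-norm}, plus the weighted check \eqref{eq:Lions-Magnes-condition} at $s=1/2$; the membership $[v]_{+}\in H_{0}^{s}(\Omega)$ is not treated separately, which is harmless for $s\le 1/2$ by \eqref{eq:tracea}--\eqref{eq:traceb} but, as you correctly observe, is a genuine issue for $1/2<s<1$ because of the strict inclusion \eqref{eq:tracec} (the paper implicitly defers this point to \cite[Lemma 2.6]{War15}, which it cites for the full statement). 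Both of your ways of closing that gap are viable. The extension route rests on the trace theory of \cite{NOS15} identifying traces of finite-energy functions on the cylinder with lateral Dirichlet condition as elements of $H_{0}^{s}(\Omega)$, together with the standard but worth-recording facts that $[V]_{+}$ remains admissible (truncation preserves the weighted space, $|\nabla_{x,y}[V]_{+}|\le|\nabla_{x,y}V|$ a.e.) and that taking the positive part commutes with the trace at $y=0$ --- exactly the delicate points you flag; Lemma~\ref{lem:minimizing-problem} gives you the starting extension. The density/weak-compactness route is cleaner and uniform in $s$: it only needs $[w]_{+}\in H_{0}^{s}(\Omega)$ for $w\in\mathcal{C}_{c}^{\infty}(\Omega)$, boundedness from the pointwise estimates, identification of the weak limit through $L^{2}$ convergence, and weak lower semicontinuity of the norm; this is essentially the same mechanism the paper itself uses in the proof of Lemma~\ref{densitylemma} (there with Mazur's lemma, Lemma~\ref{lem:Mazur}, which you do not even need since you only want the weak limit and the norm bound, not strong approximation). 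Either way you recover both conclusions, and your argument supplies the membership step for $s\in(1/2,1)$ that the paper's short proof leaves implicit.
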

\begin{proof}
Note that $|a_+|\le |a|$ and $|a_+-b_+|\le |a-b|$ for $a,b\in\R$.
Then, the inequality $\|[v]_{+}\|_{H_{0}^{s}(\Omega)}\le\|v\|_{H_{0}^{s}(\Omega)}$
immediately follows from 
\begin{align*}
 &\|v_+\|_{L^2(\Omega)}^2 =\int_\Omega |(v(x))_+|^2\,dx
  \le \int_\Omega |v(x)|^2\,dx =\|v\|_{L^2(\Omega)}^2,\\
  &[v_+]_{H_{0}^{s}(\Omega)}^{2}=
  \int_{\Omega}\int_{\Omega}\frac{((v(x))_+-(v(z))_+)^{2}}{|x-z|^{n+2s}}\,dx\,dz.
  \le  \int_{\Omega}\int_{\Omega}\frac{(v(x)-v(z))^{2}}{|x-z|^{n+2s}}\,dx\,dz
  =[v]_{H_{0}^{s}(\Omega)}^{2}.
\end{align*}
For the case $s=1/2$, 
\eqref{eq:Lions-Magnes-condition} is additionally required. It is also shown as
\[
\int_{\Omega}\frac{v_+(x)^{2}}{{\rm dist}(x,\partial\Omega)}\,dx
\le
\int_{\Omega}\frac{v(x)^{2}}{{\rm dist}(x,\partial\Omega)}\,dx<\infty.
\]
\end{proof}
\begin{lem}\label{densitylemma}
  We suppose that $v\in H_0^s(\Omega)$ and $v(x)\ge 0$ a.e. $x\in\Omega$.
  Then, there exists a sequence $\{ v_{m}\}_{m=1}^\infty \subset \mathcal{C}_{c}^{\infty}(\Omega)$ such that
  $v_{m}(x)\ge 0$ for $x\in\Omega$ and $v_{m}\to v$ in $H_0^s(\Omega)$ as $m\to \infty$.
\end{lem}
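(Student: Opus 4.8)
The statement is a nonnegativity-preserving density result: every nonnegative $v \in H_0^s(\Omega)$ is the $H_0^s$-limit of nonnegative functions in $\mathcal{C}_c^\infty(\Omega)$. The obstacle is that the \emph{definition} of $H_0^s(\Omega)$ already gives density of $\mathcal{C}_c^\infty(\Omega)$, but a generic approximating sequence need not be nonnegative, and truncating by the positive part destroys smoothness. So the plan is a two-step regularization: first replace $v$ by a nonnegative function that is \emph{bounded away from boundary effects} or at least in a nicer class, then mollify.

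\medskip

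\textbf{Step 1 (a nonnegative smooth approximation, ignoring support).} By the very definition of $H_0^s(\Omega)$ (completion of $\mathcal{C}_c^\infty(\Omega)$), pick $w_m \in \mathcal{C}_c^\infty(\Omega)$ with $w_m \to v$ in $H_0^s(\Omega)$. These $w_m$ may be negative somewhere. Set $\tilde v_m := (w_m)_+$. By Lemma~\ref{lem:A3} the map $u \mapsto u_+$ is a contraction on $H_0^s(\Omega)$, and it is also continuous there (the same inequalities $|a_+ - b_+| \le |a-b|$ used in the proof of Lemma~\ref{lem:A3}, applied to $w_m - w_\ell$, show $\|(w_m)_+ - (w_\ell)_+\|_{H_0^s(\Omega)} \le \|w_m - w_\ell\|_{H_0^s(\Omega)}$, so $\{(w_m)_+\}$ is Cauchy, and since $(w_m)_+ \to v_+ = v$ in $L^2(\Omega)$ the limit is $v$). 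Thus $\tilde v_m \to v$ in $H_0^s(\Omega)$, each $\tilde v_m \ge 0$, each $\tilde v_m$ is Lipschitz with compact support in $\Omega$ — but \emph{not} smooth.

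\medskip

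\textbf{Step 2 (mollification preserving nonnegativity and support).} Fix $m$. Since $\tilde v_m$ has compact support $K_m \Subset \Omega$, for $\varepsilon$ small the mollification $\tilde v_m * \rho_\varepsilon$ (with $\rho_\varepsilon$ a standard nonnegative mollifier) is in $\mathcal{C}_c^\infty(\Omega)$, is nonnegative (convolution of two nonnegative functions), and converges to $\tilde v_m$ as $\varepsilon \to 0$ in $H_0^s(\Omega)$: in $L^2(\Omega)$ this is classical, and for the Gagliardo seminorm one uses the standard fact that mollification converges in the $H^s$-seminorm on $\mathbb{R}^n$ (extending $\tilde v_m$ by zero, which is legitimate as $\tilde v_m \in \tilde H^s(\Omega)$ — recall Remark~\ref{rem:equivalent-def-frac-space}) together with Minkowski's integral inequality / Young's inequality applied to the difference quotient; for $s=1/2$ the Lions–Magenes term $\int_\Omega v^2/\mathrm{dist}(x,\partial\Omega)\,dx$ is controlled since $\tilde v_m * \rho_\varepsilon$ is supported in a fixed compact subset of $\Omega$ where $\mathrm{dist}(\cdot,\partial\Omega)$ is bounded below, so that term converges by dominated convergence. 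Then a diagonal argument over $m$ and $\varepsilon = \varepsilon(m) \to 0$ produces the desired sequence $v_m := \tilde v_m * \rho_{\varepsilon(m)} \in \mathcal{C}_c^\infty(\Omega)$ with $v_m \ge 0$ and $v_m \to v$ in $H_0^s(\Omega)$.

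\medskip

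\textbf{Main obstacle.} The delicate point is verifying that mollification converges in the $H_0^s(\Omega)$-norm — in particular handling the nonlocal Gagliardo seminorm and, for $s=1/2$, the extra Lions–Magenes weight. The cleanest route is to transfer to $\mathbb{R}^n$ via the zero-extension identification $H_0^s(\Omega) = \tilde H^s(\Omega)$ (for $s \ne 1/2$; and for $s=1/2$ via \eqref{eq:Lions-Magnes-condition} as in Remark~\ref{rem:equivalent-def-frac-space}), where mollification is a standard approximate-identity argument. One must be slightly careful that zero-extension commutes appropriately with convolution near $\partial\Omega$ — this is where the compact support of $\tilde v_m$ from Step~1 is essential, since it keeps everything away from the boundary at the $m$-th stage.
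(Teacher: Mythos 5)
There is a genuine gap in your Step 1. Lemma~\ref{lem:A3} only gives the norm bound $\|u_{+}\|_{H_{0}^{s}(\Omega)}\le\|u\|_{H_{0}^{s}(\Omega)}$; it does not make $u\mapsto u_{+}$ a $1$-Lipschitz map on $H_{0}^{s}(\Omega)$, and the estimate you claim, $\|(w_{m})_{+}-(w_{\ell})_{+}\|_{H_{0}^{s}(\Omega)}\le\|w_{m}-w_{\ell}\|_{H_{0}^{s}(\Omega)}$, does not follow from $|a_{+}-b_{+}|\le|a-b|$. That pointwise inequality controls only the $L^{2}$ part of the difference; the Gagliardo seminorm of $(w_{m})_{+}-(w_{\ell})_{+}$ involves four values, and you would need
\[
|(a_{+}-b_{+})-(c_{+}-d_{+})|\le|(a-b)-(c-d)|\quad\text{for all }a,b,c,d\in\mathbb{R},
\]
which is false: take $a=1$, $b=2$, $c=-1$, $d=0$, so the left-hand side equals $1$ while the right-hand side equals $0$. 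Hence your assertion that $\{(w_{m})_{+}\}$ is Cauchy in $H_{0}^{s}(\Omega)$ is unjustified; continuity of the superposition operator $u\mapsto u_{+}$ on $H^{s}$ is a genuinely nontrivial fact and is exactly the point that needs an argument here, so it cannot be waved through via Lemma~\ref{lem:A3}. Your Step 2 (mollification of a nonnegative, compactly supported approximant, with the Lions--Magenes term for $s=1/2$ handled by the distance to $\partial\Omega$ being bounded below on the support) is fine and coincides with the final step of the paper's proof.

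The paper avoids precisely this difficulty: from Lemma~\ref{lem:A3} it only concludes that $\{[\varphi_{k}]_{+}\}$ is \emph{bounded} in $H_{0}^{s}(\Omega)$; since $[\varphi_{k}]_{+}\to v_{+}=v$ in $L^{2}(\Omega)$, a subsequence converges weakly in $H_{0}^{s}(\Omega)$ to $v$, and Mazur's lemma (Lemma~\ref{lem:Mazur}) then produces finite convex combinations of the $[\varphi_{k}]_{+}$ --- still nonnegative, continuous and compactly supported --- converging strongly to $v$ in $H_{0}^{s}(\Omega)$; these are then mollified exactly as in your Step 2. To repair your proof, either insert this weak-compactness/Mazur argument in place of the false Lipschitz claim, or invoke an actual continuity theorem for $u\mapsto u_{+}$ on fractional Sobolev spaces (with a proper reference), which is a much heavier tool than what the paper uses.
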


Indeed, Lemma~\ref{densitylemma} can be proved using exactly the same idea as in \cite[Theorem 3.29]{McL00}. Here, we provide an alternative proof, which relies on Mazur's lemma; see, e.g., \cite[Theorem V.1.2]{Yo80}. 
\begin{lem}
\label{lem:Mazur}Let $(X,\|\bullet\|)$ be a norm space, and $\{x_{k}\}\subset X$. If $x_{\infty}\in X$ exists such that 
\[
x_{k}\text{ converges to }x_{\infty}\text{ weakly,}
\]
then, given any $\epsilon>0$, there exists $\alpha_{1},\cdots,\alpha_{\ell}\ge0$ with $\sum_{k=1}^{\ell}\alpha_{k}=1$ such that 
\[
\bigg\| x_{\infty}-\sum_{k=1}^{\ell}\alpha_{k}x_{k}\bigg\|<\epsilon.
\]
\end{lem}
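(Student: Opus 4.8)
The plan is to deduce this from the classical fact that, for a \emph{convex} subset of a normed space, the norm closure and the weak closure coincide (Mazur's theorem, whence the name of the lemma). First I would set
\[
C := \bigg\{ \sum_{k=1}^{\ell}\alpha_{k}x_{k} \ : \ \ell\in\N,\ \alpha_{k}\ge 0,\ \sum_{k=1}^{\ell}\alpha_{k}=1 \bigg\},
\]
the convex hull of $\{x_{k}\}$, which is manifestly a convex subset of $X$ containing every $x_{k}$. The assertion to be proved is precisely that $x_{\infty}$ lies in the norm closure $\overline{C}$ of $C$: once $z\in C$ is found with $\|x_{\infty}-z\|<\epsilon$, unravelling the definition of $C$ produces the desired $\alpha_{1},\dots,\alpha_{\ell}\ge 0$ with $\sum_{k=1}^{\ell}\alpha_{k}=1$ and $\|x_{\infty}-\sum_{k=1}^{\ell}\alpha_{k}x_{k}\|<\epsilon$.

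Next I would observe that $x_{\infty}$ lies in the \emph{weak} closure $\overline{C}^{\,w}$ of $C$: indeed, since $x_{k}$ converges to $x_{\infty}$ weakly, every weakly open neighbourhood of $x_{\infty}$ contains some $x_{k}\in C$, so $x_{\infty}$ is even in the weak closure of $\{x_{k}\}_{k}\subset C$. It therefore suffices to show $\overline{C}^{\,w}\subseteq\overline{C}$. The reverse inclusion $\overline{C}\subseteq\overline{C}^{\,w}$ is automatic because the norm topology is finer than the weak topology, so a weakly closed set is norm closed; hence $\overline{C}^{\,w}$ is a norm-closed set containing $C$, and must contain $\overline{C}$. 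Only the inclusion $\overline{C}^{\,w}\subseteq\overline{C}$ genuinely uses convexity.

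To prove $\overline{C}^{\,w}\subseteq\overline{C}$ I would argue by contraposition. Suppose $x\notin\overline{C}$. Then $\{x\}$ is compact convex, $\overline{C}$ is closed convex, and the two are disjoint, so the geometric Hahn--Banach separation theorem (valid in any normed, hence locally convex, space) yields $\ell\in X'$ and $\alpha\in\R$ with $\ell(x)<\alpha\le\ell(y)$ for all $y\in\overline{C}$. The set $U:=\{z\in X:\ell(z)<\alpha\}$ is weakly open, contains $x$, and is disjoint from $C\subseteq\overline{C}$; hence $x\notin\overline{C}^{\,w}$. Combining the two inclusions gives $\overline{C}^{\,w}=\overline{C}$, so $x_{\infty}\in\overline{C}$, which completes the argument.

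The main obstacle is the separation step: one must invoke the correct form of Hahn--Banach (strict separation of a point from a disjoint closed convex set) and note that it applies in a space that is merely normed — not assumed complete or reflexive. This is entirely standard but is the only non-formal ingredient; everything else is bookkeeping with the definitions of the weak topology and of the convex hull, and I would keep that part brief, or simply defer to \cite[Theorem V.1.2]{Yo80}.
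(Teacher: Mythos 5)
Your argument is correct: it is the standard proof of Mazur's theorem, passing to the convex hull, noting the weak limit lies in its weak closure, and then using geometric Hahn--Banach separation to show that for a convex set the weak closure coincides with the norm closure. The paper itself offers no proof of this lemma --- it simply defers to \cite[Theorem V.1.2]{Yo80} --- and your write-up is essentially a reproduction of the proof of that cited result, so it is entirely consistent with what the paper relies on.
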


\begin{proof}[Proof of Lemma~{\rm \ref{densitylemma}}] 
Let $v\in H_{0}^{s}(\Omega)$ with $v\ge0$. By the definition of $H_{0}^{s}(\Omega)$, there exists a sequence $\{\varphi_{k}\}\subset\mathcal{C}_{c}^{\infty}(\Omega)$ such that 
\begin{equation}
\lim_{k\rightarrow\infty}\|v-\varphi_{k}\|_{H_{0}^{s}(\Omega)}=0.\label{eq:densitylemma1}
\end{equation}
Note that $[\varphi_{k}]_{+}\in\mathcal{C}_{c}^{0}(\Omega)$ for each $k$ (but is not necessarily smooth). Observe that 
\begin{align}
 \|v-[\varphi_{k}]_{+}\|_{L^{2}(\Omega)}^{2}
 =\int_{\Omega}|v(x)_+-(\varphi_{k}(x))_+|^{2}\,dx
 \le \int_{\Omega}|v(x)-\varphi_{k}(x)|^{2}\,dx
 =\|v-\varphi_{k}\|_{L^{2}(\Omega)}^{2}.\label{eq:densitylemma2}
\end{align}
Combining \eqref{eq:densitylemma1} and \eqref{eq:densitylemma2}, we know that 
\begin{equation}
\lim_{k\rightarrow\infty}\|v-[\varphi_{k}]_{+}\|_{L^{2}(\Omega)}=0.\label{eq:densitylemma3}
\end{equation}
From Lemma~\ref{lem:A3}, we know that 
\[
\sup_{k}\|[\varphi_{k}]_{+}\|_{H_{0}^{s}(\Omega)}\le\sup_{k}\|\varphi_{k}\|_{H_{0}^{s}(\Omega)}<\infty.
\]
Therefore, there exists a subsequence of $\{[\varphi_{k}]_{+}\}$, here, we still denote by $\{[\varphi_{k}]_{+}\}$, such that 
\[
[\varphi_{k}]_{+}\text{ converges to }\tilde{v}\text{ weakly in }H_{0}^{s}(\Omega)
\]
for some $\tilde{v}\in H_{0}^{s}(\Omega)$. From \eqref{eq:densitylemma3}, we know that $\tilde{v}=v$. 

Applying Lemma~\ref{lem:Mazur}, given any $m\in\mathbb{N}$, there exists $\alpha_{1},\cdots,\alpha_{\ell}\ge0$ with $\sum_{k=1}^{\ell}\alpha_{k}=1$ such that 
\[
\bigg\| v-\sum_{k=1}^{\ell}\alpha_{k} [\varphi_{k}]_{+} \bigg\|_{H_{0}^{s}(\Omega)}<\frac{1}{m}.
\]
Letting $\tilde{v}_{m}:=\sum_{k=1}^{\ell}\alpha_{k} [\varphi_{k}]_{+}$, we have 
\begin{equation}
\tilde{v}_{m}\ge0,\quad\tilde{v}_{m}\in\mathcal{C}_{c}^{0}(\Omega),\quad\text{and}\quad\|v-\tilde{v}_{m}\|_{H_{0}^{s}(\Omega)}<\frac{1}{m}\quad\text{for all }m.\label{eq:densitylemma4}
\end{equation}
For each fixed $m$, using a standard mollifier, we can easily construct a function $v_{m}\in\mathcal{C}_{c}^{\infty}(\Omega)$ with $v_{m}\ge0$ such that 
\begin{equation}
\|v_{m}-\tilde{v}_{m}\|_{H_{0}^{s}(\Omega)}<\frac{1}{m}.\label{eq:densitylemma5}
\end{equation}
Combining \eqref{eq:densitylemma4} and \eqref{eq:densitylemma5}, we conclude that 
\[
\|v-v_{m}\|_{H_{0}^{s}(\Omega)}\le\|v-\tilde{v}_{m}\|_{H_{0}^{s}(\Omega)}+\|v_{m}-\tilde{v}_{m}\|_{H_{0}^{s}(\Omega)}<\frac{2}{m}.
\]
Hence, we obtain our desired lemma. 
\end{proof}

\section{An elementary proof of a chain rule}\label{sec:chain-rule}
Let $H$ and $V$ be real Hilbert spaces such that $V$ is densely and continuously embedded in $H$. Then, $H$ can be continuously embedded in $V'$ using the following identification:
\begin{align*}
	~_{V'}\langle f,v\rangle_V = (f,v)_H\qquad (f\in H,~v\in V).
\end{align*} 
Let $a(\bullet,\bullet)$ be a coercive bounded symmetric bilinear form
on $V\times V$. Then, from the Lax-Milgram theorem \cite{Cia02},
there exists $A\in {\rm Isom}\,(V,V')$ such that
\begin{align*}
	a(u,v)=~_{V'}\langle Au,v\rangle_V\qquad (u,v\in V).
\end{align*}

For a bilinear form $a(\bullet,\bullet)$, we abbreviate $a(u,u)$ to $a(u)$
in this note. The following theorem can be proved by modifying the ideas in \cite[Prop.~1.2 in Chap.~III.1]{Sho97}.
\begin{thm}\label{thm:chain-rule}
	Under the above conditions,
	we suppose that $u\in H^1(0,T;H)$ and that
	$u(t)\in V$ a.e. $t\in (0,T)$ and $Au\in L^2(0,T;H)$.
	Then, $u\in \mathcal{C}^0([0,T];V)$,  
	\begin{align}\label{w11}
		a(u(\bullet)) \in W^{1,1}(0,T),
	\end{align}
	and 
	\begin{align}\label{crule}
		\frac{d}{dt} a(u(t))
		=2(Au(t),u'(t))_H\qquad \mbox{a.e.}~t\in (0,T).
	\end{align}
\end{thm}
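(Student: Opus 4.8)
The plan is to reduce the statement to a known abstract chain-rule result by a mollification-in-time argument, following the template of \cite[Prop.~1.2 in Chap.~III.1]{Sho97}, and then to upgrade the measure-theoretic conclusion to a continuous-representative conclusion. Set $g := Au \in L^2(0,T;H)$. Because $u \in H^1(0,T;H)$ we have $u' \in L^2(0,T;H)$. First I would introduce the time-regularizations $u_\varepsilon := \rho_\varepsilon * u$ on a slightly shrunken interval, where $\rho_\varepsilon$ is a standard mollifier; since $A \in \mathrm{Isom}(V,V')$ is a fixed bounded operator that commutes with convolution in $t$, one gets $A u_\varepsilon = \rho_\varepsilon * g \to g$ in $L^2_{\mathrm{loc}}(0,T;H)$, and also $u_\varepsilon \to u$ in $H^1_{\mathrm{loc}}(0,T;H)$. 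For the smooth-in-time functions $u_\varepsilon$ (which take values in $V$ since $A u_\varepsilon \in H \subset V'$ forces $u_\varepsilon = A^{-1}(Au_\varepsilon) \in V$) the classical product rule gives, for every $t$,
\begin{align*}
\frac{d}{dt}\, a(u_\varepsilon(t)) = 2\, a(u_\varepsilon(t), u_\varepsilon'(t)) = 2\, {}_{V'}\langle A u_\varepsilon(t), u_\varepsilon'(t)\rangle_V = 2\,(A u_\varepsilon(t), u_\varepsilon'(t))_H,
\end{align*}
using symmetry of $a$ and the $H$-$V'$ identification.

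Next I would pass to the limit $\varepsilon \to 0$. The right-hand side $2(Au_\varepsilon, u_\varepsilon')_H$ converges in $L^1_{\mathrm{loc}}(0,T)$ to $2(Au, u')_H =: h \in L^1(0,T)$, by Cauchy--Schwarz together with the $L^2$-convergences of $Au_\varepsilon$ and $u_\varepsilon'$. On the other hand $a(u_\varepsilon(\cdot)) \to a(u(\cdot))$ in $L^1_{\mathrm{loc}}(0,T)$: indeed $a(u_\varepsilon) - a(u) = a(u_\varepsilon - u, u_\varepsilon + u)$, and boundedness of $a$ plus $u_\varepsilon \to u$ in $L^2_{\mathrm{loc}}(0,T;H)$... but here is the subtlety, $a$ is only bounded on $V\times V$, not on $H\times H$, so I cannot estimate $a(u_\varepsilon - u, u_\varepsilon+u)$ by $H$-norms directly. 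To handle this I would instead argue that $\frac{d}{dt} a(u_\varepsilon(\cdot))$ is Cauchy in $L^1_{\mathrm{loc}}$, hence $a(u_\varepsilon(\cdot))$ converges in $W^{1,1}_{\mathrm{loc}}$ to some $W^{1,1}_{\mathrm{loc}}$ function $\Phi$ with $\Phi' = h$; it then remains to identify $\Phi = a(u(\cdot))$ a.e. For that identification I would use that along a subsequence $u_\varepsilon(t) \to u(t)$ in $H$ for a.e. $t$, combined with $\sup_\varepsilon \|u_\varepsilon(t)\|_V < \infty$ for a.e. $t$ — this last bound comes from $a(u_\varepsilon(t)) = \Phi_\varepsilon(t)$ being bounded in $L^1_{\mathrm{loc}}$ and coercivity $a(w) \ge c\|w\|_V^2$, so on a set of a.e. $t$ the sequence $u_\varepsilon(t)$ is $V$-bounded, hence weakly $V$-convergent to $u(t)$, and then $\Phi(t) = \lim a(u_\varepsilon(t)) \ge a(u(t))$ by weak lower semicontinuity, while the reverse inequality follows by a similar mollification/averaging estimate. (A cleaner route: for fixed $0<t_1<t_2<T$ integrate the identity and pass to the limit in $\int_{t_1}^{t_2} \frac{d}{dt}a(u_\varepsilon) = a(u_\varepsilon(t_2)) - a(u_\varepsilon(t_1))$, choosing $t_1,t_2$ to be Lebesgue points where $u_\varepsilon(t_i)\to u(t_i)$ in $H$ and is $V$-bounded.)

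Once \eqref{w11} and \eqref{crule} are established on every compact subinterval, I would extend them up to the endpoints $0$ and $T$: since $h = 2(Au,u')_H \in L^1(0,T)$, the function $t \mapsto a(u(t))$ (a.e.-defined) has an absolutely continuous representative on all of $[0,T]$, giving \eqref{w11}. Finally, for $u \in \mathcal{C}^0([0,T];V)$: coercivity gives $\|u(t)-u(r)\|_V^2 \le \tfrac1c\, a(u(t)-u(r))$, and expanding $a(u(t)-u(r)) = a(u(t)) - 2a(u(t),u(r)) + a(u(r))$; the diagonal terms are continuous in $(t,r)$ by \eqref{w11}, and the cross term $a(u(t),u(r)) = {}_{V'}\langle Au(t), u(r)\rangle_V$ is continuous because $t\mapsto Au(t) \in L^2(0,T;H) \subset L^1(0,T;H)$ so $\int_0^t Au$ is continuous into $H$ — more carefully, I would first prove weak continuity $u \in \mathcal{C}^0_w([0,T];V)$ (standard, from $u\in\mathcal{C}^0([0,T];H)$ and the a.e. $V$-bound), then combine with continuity of $t\mapsto a(u(t),u(t))$ and the identity $\|u(t)-u(r)\|_V^2 \le \tfrac1c(a(u(t)) + a(u(r)) - 2a(u(t),u(r)))$ to upgrade weak to strong continuity at each point, and hence on the compact interval.

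\textbf{Main obstacle.} The genuinely delicate step is the identification of the limit $\Phi$ of the mollified quadratic forms with $a(u(\cdot))$, precisely because $a$ is controlled only on $V$ while the time-convergence of $u_\varepsilon$ is only in $H$; the resolution is to extract the a.e.-in-$t$ uniform $V$-bound on $u_\varepsilon(t)$ from the coercivity and the $L^1_{\mathrm{loc}}$-bound on $a(u_\varepsilon(\cdot))$, pass to weak $V$-limits pointwise in $t$, and use weak lower semicontinuity together with a matching upper bound. Everything else — the product rule for smooth-in-$t$ functions, the $L^2 \to L^1$ convergence of the pairing, and the endpoint extension and continuity statements — is routine.
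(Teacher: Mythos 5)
Your overall scheme (mollification in time, the product rule for the smooth approximants, passage to the limit in the integrated identity, then upgrading to $V$-continuity via weak convergence together with convergence of the quadratic form) is the same as the paper's, but there is a genuine gap at exactly the step you flag as the main obstacle, and neither of your proposed resolutions closes it as written. After integrating $\frac{d}{dt}a(u_\varepsilon)=2(Au_\varepsilon,u_\varepsilon')_H$ over $(t_1,t_2)$ you must pass to the limit in the boundary terms $a(u_\varepsilon(t_2))-a(u_\varepsilon(t_1))$. Choosing $t_i$ so that $u_\varepsilon(t_i)\to u(t_i)$ in $H$ with $\sup_\varepsilon\|u_\varepsilon(t_i)\|_V<\infty$ gives, via weak $V$-compactness and lower semicontinuity, only $\liminf_\varepsilon a(u_\varepsilon(t_i))\ge a(u(t_i))$; since the two boundary terms occur with opposite signs, this one-sided information does not permit the limit passage, so the ``cleaner route'' fails as stated, and in your main route the sentence ``the reverse inequality follows by a similar mollification/averaging estimate'' is precisely the missing proof. (Relatedly, Cauchyness of $\frac{d}{dt}a(u_\varepsilon(\cdot))$ in $L^1_{\rm loc}$ alone does not yield $W^{1,1}_{\rm loc}$-convergence of $a(u_\varepsilon(\cdot))$ to some $\Phi$ without anchoring the functions at a point --- the same issue in disguise.)

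The gap is fixable within your approach: since $a(u(t))=(Au(t),u(t))_H\le\|Au(t)\|_H\|u(t)\|_H$, one has $a(u(\cdot))\in L^1(0,T)$ and, by coercivity, $\|u(\cdot)\|_V\in L^2(0,T)$ a priori; hence $u_\varepsilon(t)$ is a $V$-valued average and Jensen's inequality for the convex functional $a$ gives $a(u_\varepsilon(t))\le(\rho_\varepsilon*a(u(\cdot)))(t)$, so $\limsup_\varepsilon a(u_\varepsilon(t))\le a(u(t))$ at every Lebesgue point of $a(u(\cdot))$, which combined with the lower bound yields $a(u_\varepsilon(t_i))\to a(u(t_i))$ for a.e.\ $t_i$ and completes the identification. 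The paper circumvents the difficulty by a different device: it differentiates the bilinear quantity $a(u_\delta(t),u_\varepsilon(t))$ with two independent mollification parameters, integrates, and lets $\delta\to0$ first; the boundary terms are then linear in the argument being passed to the limit, $a(u_\delta(t_i),u_\varepsilon(t_i))=(Au_\varepsilon(t_i),u_\delta(t_i))_H$, so convergence in $H$ suffices, and only afterwards is $\varepsilon\to0$ taken. One further small point: your parenthetical claim that $a(u(t),u(r))$ is continuous because $\int_0^t Au$ is continuous into $H$ is not a valid argument ($Au$ is only $L^2$ in time, and that primitive does not control the cross term); your corrected plan --- everywhere $V$-boundedness, weak $V$-continuity, continuity of $t\mapsto a(u(t))$, and coercivity to upgrade weak to strong convergence --- is the right one and coincides with the paper's final step.
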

\begin{proof}
Because $u\in H^1(0,T;H)$, we can choose a representative $u(t) \in H$ such that $u\in \mathcal{C}^0([0,T];H)$.
We define an extension of $u$ by
\begin{align*}
	\tilde{u}(t):=\left\{
	\begin{array}{ll}
		u(-t)&\text{for } t\in [-T,0),\\
		u(t)&\text{for } t\in [0,T],\\
		u(2T-t)&\text{for } t\in (T,2T].
	\end{array}\right.
\end{align*}
Applying the Friedrichs mollifier $u_\vep:=\eta_\vep *\tilde{u}$,
we obtain $u_\vep\in \mathcal{C}^\infty([0,T];H)$ and 
\begin{align*}
	\left\{
	\begin{array}{ll}  
		u_\vep\rightarrow u \quad&\mbox{in}~\mathcal{C}^0([0,T];H)\text{ strong},\\
		u_\vep '\rightarrow u' \quad&\mbox{in}~L^2([0,T];H)\text{ strong},\\
		Au_\vep \rightarrow Au \quad&\mbox{in}~L^2([0,T];H)\text{ strong},
	\end{array}\right.\qquad
	\mbox{as}~\vep\to 0.
\end{align*}
We remark that $u_\vep\in \mathcal{C}^\infty([0,T],V)$
is implied by $Au_\vep\in \mathcal{C}^\infty([0,T],H)$ and
$A\in \Isom (V,V')$.

Then, given any $\delta >0$ and $\vep >0$, we have
\begin{equation}
	\frac{d}{dt}a(u_\delta(t),u_\vep(t)) = a(u_\delta'(t),u_\vep(t))+a(u_\delta(t),u_\vep'(t)) \label{eq:chain2}
\end{equation}
for all $t \in [0,T]$. Choosing any $t_{0},t_{1} \in [0,T]$, and integrating \eqref{eq:chain2} over the interval $(t_{0},t_{1})$, we obtain
\begin{align}
	& a(u_\delta(t_1),u_\vep(t_1))-  a(u_\delta(t_0),u_\vep(t_0))\notag\\
	& = \int_{t_0}^{t_1} \{a(u_\delta'(t),u_\vep(t))+a(u_\delta(t),u_\vep'(t))\}
	\,dt\notag\\
	& = \int_{t_0}^{t_1} (Au_\vep(t),u_\delta'(t))_H\,dt
	+\int_{t_0}^{t_1} (Au_\delta(t),u_\vep'(t))_H\,dt.\label{ade}
\end{align}
We define  
\begin{align*}
	\mathcal{N}:=\{ t\in [0,T]:u(t)\notin V\}.
\end{align*}
Because $u(t)\in V$ a.e. $t\in (0,T)$, $\mathcal{N}$ is a measure zero set. Given any $\epsilon>0$, we have
\begin{align*}
	a(u_\delta(t_i),u_\vep(t_i))
	=(Au_\vep(t_i),u_\delta(t_i))_H
	\underset{\delta\to 0}{\longrightarrow}
	(Au_\vep(t_i),u(t_i))_H = a(u_\vep(t_i),u(t_i))
\end{align*}
for $i=0,1$. Choosing $t_{0},t_{1}\in [0,T]\setminus \mathcal{N}$ and fixing $\varepsilon>0$, 
taking $\delta\to 0$ in \eqref{ade}, we obtain
\begin{align}
	& a(u(t_1),u_\vep(t_1))-  a(u(t_0),u_\vep(t_0))\notag\\
	& = \int_{t_0}^{t_1} (Au_\vep(t),u'(t))_H\,dt
	+\int_{t_0}^{t_1} (Au(t),u_\vep'(t))_H\,dt\label{ae}
\end{align}
for all $t_{0},t_{1} \in [0,T]\setminus \mathcal{N}$. Because
\begin{align*}
	a(u_\vep(t_i),u(t_i))
	=~_{V'}\langle Au(t_i),u_\vep(t_i)\rangle_V
	\underset{\vep\to 0}{\longrightarrow}
	~_{V'}\langle Au(t_i),u(t_i)\rangle_V
	= a(u(t_i)),
\end{align*}
for $i=0,1$, by taking $\vep\to 0$ in \eqref{ae}, we obtain
\begin{equation}
	a(u(t_1))-  a(u(t_0)) = 2\int_{t_0}^{t_1} (Au(t),u'(t))_H\,dt\label{auu}
\end{equation}
for all $t_{0},t_{1} \in [0,T] \setminus \mathcal{N}$. The assertions
\eqref{w11}  and \eqref{crule} 
follow from \eqref{auu}.

Finally, we want to prove the remaining assertion $u\in \mathcal{C}^0([0,T];V)$.
From \eqref{w11}, we know that $a(u(\bullet))\in L^\infty(0,T)$ holds.
Thus, $u\in L^\infty(0,T;V)$ follows from the coercivity of $a(\bullet,\bullet)$
on $V$, and we can define $M:=\| u\|_{L^\infty(0,T;V)}$.

For each $t\in [0,T]$, there exists a sequence
$\{t_n\}_n\subset [0,T]$ 
such that the following conditions are satisfied:
\begin{align*}
	\lim_{n\to\infty}t_n=t,
	\qquad
	u(t_n)\in V,
	\qquad
	\| u(t_n)\|_V\le M.
\end{align*}
Then, there exists a sub-sequence (without relabeling) of $\{t_n\}_n$
and $\bar{u}(t)\in V$ such that
\begin{align*}
	u(t_n)\rightarrow \bar{u}(t)\quad\text{in }V\text{ weak}\quad(\text{as}~n\to\infty),
	\quad\text{and}\quad
	\| \bar{u}(t)\|_V\le M.
\end{align*}
Because $u(t_n)\to u(t)$ in $H$ strong as $n\to\infty$, then 
$u(t)=\bar{u}(t)$ follows. Hence, we obtain
\begin{align*}
	u(t)\in V\quad \mbox{and}\quad  \| u(t)\|_V\le M\quad
	\mbox{for all}~t\in [0,T].
\end{align*}
In particular, $\mathcal{N}=\emptyset$ holds and $a(u(\bullet))\in \mathcal{C}^0([0,T])$
follows from \eqref{auu}.

We again choose arbitrary $\{t_n\}_n\in [0,T]$
with $t_n\to t\in [0,T]$ as $n\to\infty$.
Repeating the above argument, we can prove that 
\begin{align}\label{utn}
	u(t_n)\rightarrow u(t)\quad\text{in }V\text{ weak}\quad(\text{as}~n\to\infty).
\end{align}
Conversely, $a(u(t_n))\to a(u(t))$ holds from
$a(u(\bullet))\in \mathcal{C}^0([0,T])$.
We define $\| u\|_a:=\sqrt{a(u)}$.
Because $\|\bullet\|_a$ becomes an equivalent norm to $\|\bullet\|_V$
from the coercivity of $a(\bullet,\bullet)$,
$\|u(t_n)\|_a\to \|u(t)\|_a$ as $n\to\infty$
and \eqref{utn} implies that $u(t_n)\to u(t)$ in $V$ strong as
$n\to \infty$.
Hence $u\in \mathcal{C}^0([0,T];V)$ holds.
\end{proof}

We remark that the assertions \eqref{w11} and \eqref{crule} are a special case of \cite[Lemma~3.3]{Bre73}.

\section*{Acknowledgments}

This work was partially supported by JSPS KAKENHI JP20H01812, JP20H00117, JP20KK0058, and MOST 108-2115-M-002-002-MY3.

\end{document}